\let\SF@@footnote\footnote
\def\footnote{\ifx\protect\@typeset@protect
    \expandafter\SF@@footnote
  \else
    \expandafter\SF@gobble@opt
  \fi
}
\def\csname SF@gobble@opt \endcsname{\@ifnextchar[
  \SF@gobble@twobracket
  \@gobble
}
\edef\SF@gobble@opt{\noexpand\protect
  \expandafter\noexpand\csname SF@gobble@opt \endcsname}
\def\SF@gobble@twobracket[#1]#2{}
\numberwithin{equation}{section}
\numberwithin{figure}{section}
  \theoremstyle{definition}
  \newtheorem{defn}{\protect\definitionname}
  \theoremstyle{plain}
  \newtheorem{lem}{\protect\lemmaname}
\theoremstyle{plain}
\newtheorem{thm}{\protect\theoremname}
  \theoremstyle{plain}
  \newtheorem{assumption}{\protect\assumptionname}
  \theoremstyle{remark}
  \newtheorem{rem}{\protect\remarkname}
  \theoremstyle{remark}
  \newtheorem*{acknowledgement*}{\protect\acknowledgementname}
  \providecommand{\acknowledgementname}{Acknowledgement}
  \providecommand{\assumptionname}{Assumption}
  \providecommand{\definitionname}{Definition}
  \providecommand{\lemmaname}{Lemma}
  \providecommand{\remarkname}{Remark}
\providecommand{\theoremname}{Theorem}
\begin{document}

\title{The Free Boundary of Variational Inequalities with Gradient Constraints}

\author{Mohammad Safdari}
\begin{abstract}
In this paper we prove that the free boundary of the minimizer of
\[
I(v):=\int_{U}\frac{1}{2}|Dv|^{2}-\eta v\, dx,
\]
subject to the pointwise gradient constraint 
\[
|Dv|_{p}\le1,
\]
is as regular as the tangent bundle of the boundary of the domain.
To this end, we study a generalized notion of ridge of a domain in
the plane, which is the set of singularity of the distance function
in the $p\hspace{1bp}$-norm to the boundary of the domain.
\end{abstract}

\maketitle

\section{Introduction}

Let $U$ be a bounded, open set in $\mathbb{R}^{2}$ whose boundary
is at least $C^{2}$.%
\footnote{This implies that $U$ is the union of finitely many components, and
each component is the interior of a simple closed Jordan curve with
finitely many holes. Each hole is itslef, the interior of a simple
closed Jordan curve.%
} Suppose $K\subset\mathbb{R}^{2}$ is a balanced (symmetric with respect
to the origin) closed convex set whose interior contains the origin.
Let 
\[
\gamma_{K}(x):=\inf\{\lambda>0\,\mid\, x\in\lambda K\}
\]
be the gauge function of $K$. Also let 
\[
K^{\circ}:=\{x\,\mid\, x\cdot k\leq1\,\textrm{ for all }k\in K\}
\]
be the polar of $K$. By our assumptions, $\gamma_{K}$ and $\gamma_{K^{\circ}}$
are norms on $\mathbb{R}^{2}$ (see \citet{MR0274683}). Let $d_{K^{\circ}}$
be the metric associated to the norm $\gamma_{K^{\circ}}$. 

We assume that $\gamma_{K^{\circ}}$ is strictly convex. This is equivalent
to requiring $K^{\circ}$ to be strictly convex, i.e. its boundary
does not contain any line segment. The latter happens, for example,
when $K$ is strictly convex and its boundary is $C^{1}$ (see \citet{MR2336304}).
As an example, let $K=\{x\,\mid\,\gamma_{q}(x)\le1\}$ for $q>1$,
where $\gamma_{q}(x)=|x|_{q}:=(|x_{1}|^{q}+|x_{2}|^{q})^{1/q}$ is
the $q\hspace{1bp}$-norm of $x=(x_{1},x_{2})$. Then $\gamma_{K^{\circ}}=\gamma_{p}$
is strictly convex, where $p=\frac{q}{q-1}$ is the dual exponent
to $q$.

Let 
\[
I(v):=\int_{U}\frac{1}{2}|Dv|^{2}-\eta v\, dx,
\]
where $\eta>0$. Let $u$ be the minimizer of $I$ over 
\[
W_{K}:=\{v\in H_{0}^{1}(U)\,\mid\,\gamma_{K}(Dv)\leq1\textrm{ a.e. }\}.
\]
It can be shown that $u$ is also the minimizer of $I$ over 
\[
\{v\in H_{0}^{1}(U)\,\mid\, v(x)\leq d_{K^{\circ}}(x,\partial U)\textrm{ a.e. }\}.
\]
For the proof see \citet{MR0346345}, \citet{MR1797872} and \citet{MR1}.

When $K$ is the unit disk, and therefore $\gamma_{K},\gamma_{K^{\circ}}$
are both the Euclidean norm, the above problem is the famous elastic-plastic
torsion problem. The regularity of the free boundary of elastic-plastic
torsion problem is studied by \citet{MR0412940,MR0521411}, \citet{MR534111},
\citet{MR552267}, and \citet*{MR563207}. Their work is explained
by \citet{MR679313}. 

In this paper, we extend their results to the more general problem
explained above. A motivation for our study was to fill the gap between
the known regularity results mentioned above and the still open question
of regularity of the minimizer of some convex functionals subject
to gradient constraints arising in random surfaces. To learn about
the latter see the work of \citet{MR2605868}.

In order to study the free boundary, we generalize the notion of ridge,
by replacing the Euclidean norm by other norms. We also consider the
singularities of the distance function (in the new norm) to the boundary
of a domain. These notions have been considered and applied before
by \citet{MR2094267}, and \citet{MR2336304}. In particular the work
of \citet{MR2336304} has considerable intersection with ours. The
difference between our work and theirs lies in that we allow less
regular domains, and consider norms that are less restricted in some
aspects than what they consider.

\section{The Ridge%
\footnote{In this section we do not need any assumptions about $\partial U$.%
}}

First, we start by generalizing the notion of ridge.
\begin{defn}
The \textbf{$K$-ridge} of $U$ is the set of all points $x\in U$
where 
\[
d_{K}(x):=d_{K}(x,\partial U)
\]
is not $C^{1,1}$ in any neighborhood $V$ of $x$. We denote it by
\[
R_{K}.
\]
\end{defn}
\begin{lem}
Suppose $\gamma_{K}$ is strictly convex. If $d_{K}(x)=\gamma_{K}(x-y)=\gamma_{K}(x-z)$
for two different points $y,z$ on $\partial U$, then $d_{K}$ is
not differentiable at $x$.\end{lem}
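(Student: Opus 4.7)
My plan is to argue by contradiction. Suppose $d_{K}$ is differentiable at $x$ with gradient $p := \nabla d_{K}(x)$. The idea is to show that differentiability forces $p$ to satisfy two independent saturation conditions, $\langle p, x-y\rangle = d_K(x)$ and $\langle p, x-z\rangle = d_K(x)$, each reflecting that $y$ (respectively $z$) realizes the minimum defining $d_K(x)$. Strict convexity of $\gamma_K$ will then make the coexistence of these two equalities impossible.

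The first step is to verify that $p \in K^{\circ}$, i.e.\ $\langle p, v\rangle \le \gamma_{K}(v)$ for every $v$. This is the derivative form of the Lipschitz bound $|d_{K}(x')-d_{K}(x)| \le \gamma_{K}(x'-x)$, which follows from the triangle inequality for $\gamma_K$ together with the symmetry $\gamma_K(-\cdot) = \gamma_K(\cdot)$ provided by $K$ being balanced. Plugging $x' = x+tv$ with $t>0$, dividing by $t$, and letting $t\to 0^{+}$ gives the claim.

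The second step is to produce a matching lower bound on $\langle p, x-y\rangle$ using the fact that $y$ is a nearest boundary point to $x$. For $t \in (0,1)$ the interpolant $x_t := (1-t)x + ty$ satisfies
\[
d_{K}(x_t) \le \gamma_{K}(x_t - y) = (1-t)\,\gamma_K(x-y) = (1-t)\,d_{K}(x),
\]
because $y \in \partial U$. Differentiating this inequality at $t=0$ yields $\langle p, y-x\rangle \le -d_{K}(x)$, i.e.\ $\langle p, x-y\rangle \ge d_{K}(x)$. Combined with the upper bound $\langle p, x-y\rangle \le \gamma_K(x-y) = d_K(x)$ from the first step, this forces equality; the same reasoning with $z$ in place of $y$ gives $\langle p, x-z\rangle = d_K(x)$.

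Adding the two saturated equalities yields $\langle p, (x-y)+(x-z)\rangle = 2\,d_{K}(x)$, and one more application of $\langle p,\cdot\rangle \le \gamma_K(\cdot)$ to the vector $(x-y)+(x-z)$ forces $\gamma_{K}((x-y)+(x-z)) \ge 2\,d_{K}(x)$. But $x-y$ and $x-z$ are distinct nonzero vectors of the same $\gamma_K$-norm (so they cannot be positive multiples of one another), whence the strict convexity hypothesis gives the strict triangle inequality $\gamma_K((x-y)+(x-z)) < \gamma_K(x-y) + \gamma_K(x-z) = 2\,d_K(x)$, the desired contradiction. The only substantive step is the saturation argument of the second step, which promotes a one-sided Lipschitz inequality into an equality along each minimizing direction; once this is in hand the rest is a direct invocation of the strict convexity hypothesis.
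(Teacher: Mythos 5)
Your proof is correct. It rests on the same key computation as the paper's: along the segment from $x$ to a nearest boundary point the distance decays linearly (you use the one-sided inequality $d_{K}((1-t)x+ty)\le(1-t)d_{K}(x)$, the paper the exact identity), and differentiating at $x$ shows the putative gradient $p$ saturates the dual pairing against both $x-y$ and $x-z$. Where you diverge is in how strict convexity is exploited: the paper asserts $\gamma_{K^{\circ}}(Dd_{K}(x))=1$ and argues that a dual-unit functional can attain the value $-1$ in at most one $\gamma_{K}$-unit direction, whereas you need only the inequality $\langle p,v\rangle\le\gamma_{K}(v)$ (derived from the Lipschitz bound $|d_{K}(x')-d_{K}(x)|\le\gamma_{K}(x'-x)$), after which you sum the two saturated equalities and contradict the strict triangle inequality for the distinct, equal-norm, hence non-proportional vectors $x-y$ and $x-z$. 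The two finishes are dual formulations of the same geometric fact (strict convexity of the unit ball of $\gamma_{K}$), but yours is somewhat more self-contained, since it avoids invoking the unproved claim that the gradient has dual norm exactly $1$; the paper's formulation, on the other hand, isolates the statement (uniqueness of the unit direction of derivative $-1$) in the form that is reused later, e.g. in the proof that $R_{K^{\circ},0}\subset E$.
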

\begin{proof}
Along the segment $\overline{xy}$ (and similarly $\overline{xz}$)
we have 
\begin{eqnarray}
d_{K}\big(x+\frac{t}{\gamma_{K}(x-y)}(y-x)\big) &  & =\gamma_{K}\big(x+\frac{t}{\gamma_{K}(x-y)}(y-x)-y\big)\nonumber \\
 &  & =\gamma_{K}(x-y)-t.
\end{eqnarray}
Now suppose to the contrary that $d_{K}$ is differentiable at $x$,
then we have 
\begin{equation}
Dd_{K}(x)\cdot\frac{y-x}{\gamma_{K}(x-y)}=-1=Dd_{K}(x)\cdot\frac{z-x}{\gamma_{K}(x-z)}.
\end{equation}
We know that  $\gamma_{K^{\circ}}(Dd_{K}(x))=1$. Thus, by strict
convexity of $\gamma_{K}$, there is at most one direction $e$ with
$\gamma_{K}(e)=1$ along which 
\[
D_{e}d_{K}(x)=Dd_{K}(x)\cdot e=-1.
\]
Therefore $d_{K}$ can not be differentiable at $x$.\end{proof}
\begin{defn}
The subset of the $K$-ridge consisting of the points with more than
one $d_{K}$-closest point on $\partial U$, is denoted by $R_{K,0}$.
\end{defn}
Now we return to our minimization problem. We assume that the minimizer,
$u$, is in $C^{1}(U)$. See \citet{MR1} for the proof of a stronger
regularity assuming some extra restrictions on $\gamma_{K^{\circ}}$.
\begin{lem}
We have 
\[
\{x\in U\,\mid\, u(x)=d_{K^{\circ}}(x)\}=\{x\in U\,\mid\,\gamma_{K}(Du(x))=1\},
\]
and 
\[
\{x\in U\,\mid\, u(x)<d_{K^{\circ}}(x)\}=\{x\in U\,\mid\,\gamma_{K}(Du(x))<1\}.
\]
\end{lem}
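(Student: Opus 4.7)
The plan is to establish the two claimed equalities by proving a single inclusion for each. Since $u\le d_{K^\circ}$ in $U$ (by the equivalent formulation of the variational problem mentioned in the excerpt) and $\gamma_K(Du)\le 1$ (by admissibility in $W_K$), the pairs $\{u=d_{K^\circ}\},\{u<d_{K^\circ}\}$ and $\{\gamma_K(Du)=1\},\{\gamma_K(Du)<1\}$ each partition $U$, so it suffices to prove the inclusions $\{u=d_{K^\circ}\}\subseteq\{\gamma_K(Du)=1\}$ and $\{u<d_{K^\circ}\}\subseteq\{\gamma_K(Du)<1\}$.

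For the first inclusion I would fix $x_0$ with $u(x_0)=d_{K^\circ}(x_0)$ and pick a $d_{K^\circ}$-closest boundary point $y\in\partial U$. Both $u$ and $d_{K^\circ}$ are $1$-Lipschitz with respect to $\gamma_{K^\circ}$, and on the segment $\overline{x_0 y}$, whose $\gamma_{K^\circ}$-length equals $u(x_0)$, both functions interpolate between the values $u(x_0)$ and $0$. Each must therefore decrease at the maximal rate $1$ along the segment, so $u=d_{K^\circ}$ on the whole segment. Differentiating at $x_0$ in the direction $\hat v:=(y-x_0)/\gamma_{K^\circ}(y-x_0)$ yields $Du(x_0)\cdot\hat v=-1$, and duality of the norms then forces $\gamma_K(Du(x_0))\ge 1$, hence $=1$ in view of the constraint.

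For the second inclusion I would argue by contradiction: assume $u(x_0)<d_{K^\circ}(x_0)$ but $\gamma_K(Du(x_0))=1$, and let $V_0$ be the connected component of the open set $\{u<d_{K^\circ}\}$ containing $x_0$. Inside $V_0$ the obstacle is inactive, so compactly supported test perturbations of $u$ remain admissible, and minimality yields the Euler-Lagrange equation $-\Delta u=\eta$ in $V_0$; elliptic regularity then gives $u\in C^\infty(V_0)$ with each $u_i$ harmonic. Writing the convex function $F:=\gamma_K^2$ as a supremum of its affine minorants makes $F(Du)$ a continuous (as $u\in C^1$) supremum of harmonic functions, hence subharmonic on $V_0$. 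Since $F(Du)\le 1$ globally with equality at the interior point $x_0$, the strong maximum principle forces $\gamma_K(Du)\equiv 1$ throughout $V_0$.

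To finish I would run a characteristics argument, using that strict convexity of $\gamma_{K^\circ}$ makes $\gamma_K$ of class $C^1$ off the origin. Setting $e(x):=D\gamma_K(Du(x))$ and differentiating the identity $\gamma_K(Du)\equiv 1$ in $V_0$ gives $D^2u(x)\,e(x)=0$, so $Du$ is constant along integral curves of the vector field $e$, which therefore straighten into lines. Along the backward ray $x_0-s\,e(x_0)$, the Euler identity $Du(x_0)\cdot e(x_0)=\gamma_K(Du(x_0))=1$ yields $u(x_0-s\,e(x_0))=u(x_0)-s$. Since $V_0$ is bounded the ray exits $V_0$ at some $s_*<\infty$ at a point $y\in\partial V_0$ where $u(y)=d_{K^\circ}(y)$ (either $y\in\partial U$, or $y$ lies on the free boundary). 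Because $\gamma_{K^\circ}(x_0-y)=s_*$, the triangle inequality for $d_{K^\circ}$ then gives $d_{K^\circ}(x_0)\le d_{K^\circ}(y)+s_*=u(y)+s_*=u(x_0)$, contradicting $u(x_0)<d_{K^\circ}(x_0)$. The delicate points are the subharmonicity of $F(Du)$, which I handle via the affine-minorant representation rather than a $C^2$ calculus on $\gamma_K$, and carrying the characteristic propagation through with only $C^1$ regularity of $\gamma_K$; both steps rely crucially on the standing assumption that $\gamma_{K^\circ}$ is strictly convex.
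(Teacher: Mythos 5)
Your proposal is correct, and its first half is essentially the paper's argument: the paper proves $\{u=d_{K^{\circ}}\}\subseteq\{\gamma_{K}(Du)=1\}$ by showing (Lemma \ref{segment is plastic}) that $u=d_{K^{\circ}}$ on the segment to a closest boundary point and reading off $D_{l}u=-1$, exactly as you do with the $1$-Lipschitz interpolation argument. For the harder inclusion $\{u<d_{K^{\circ}}\}\subseteq\{\gamma_{K}(Du)<1\}$ you take a genuinely different route. The paper exploits duality at the outset: $\gamma_{K}(Du(x))=1$ yields a fixed direction $l$ with $\gamma_{K^{\circ}}(l)=1$ and $D_{l}u(x)=1$; since $D_{l}u$ is a single harmonic function bounded by $1$ on the component of $\{u<d_{K^{\circ}}\}$, the strong maximum principle gives $D_{l}u\equiv1$ there, and the propagation line is available immediately (its direction was fixed in advance), with the contradiction obtained by comparing $u$ with $d_{K^{\circ}}$ along that line (using strict convexity only at the free-boundary exit point). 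You instead apply the maximum principle to the subharmonic composite $\gamma_{K}^{2}(Du)$ (handled correctly via affine minorants), conclude $\gamma_{K}(Du)\equiv1$ on the whole component, and then recover a straight characteristic through the ODE for $e=D\gamma_{K}(Du)$; this needs $\gamma_{K}\in C^{1}$ away from the origin (dual to the strict convexity of $\gamma_{K^{\circ}}$), the Euler identity $D\gamma_{K}(\xi)\cdot\xi=\gamma_{K}(\xi)$, and also the fact $\gamma_{K^{\circ}}(D\gamma_{K}(\xi))=1$, which you use implicitly when asserting $\gamma_{K^{\circ}}(x_{0}-y)=s_{*}$ and should state explicitly. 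What each approach buys: the paper's is shorter, uses only harmonicity of one directional derivative, and needs no differentiability of $\gamma_{K}$; yours gives the stronger intermediate statement $\gamma_{K}(Du)\equiv1$ on the component, and your endgame via the triangle inequality $d_{K^{\circ}}(x_{0})\le\gamma_{K^{\circ}}(x_{0}-y)+d_{K^{\circ}}(y)$ is cleaner than the paper's case analysis at the exit point (which splits according to $y\in\partial U$ or $y$ on the free boundary and reuses the uniqueness of the extremal direction).
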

\begin{proof}
First suppose $u(x)=d_{K^{\circ}}(x)=\gamma_{K^{\circ}}(x-y)$ for
some $y\in\partial U$. Then by Lemma \ref{segment is plastic} we
have $D_{l}u(x)=-1$, where $l$ is the direction of $\overline{xy}$
segment (with $\gamma_{K^{\circ}}(l)=1$). Therefore $\gamma_{K}(Du(x))$
can not be less than $1$.

Next suppose $D_{l}u(x)=1$ for some $l$ with $\gamma_{K^{\circ}}(l)=1$,
and $u(x)<d_{K^{\circ}}(x)$. We know that $Du$ is harmonic in the
component of the open set $u<d_{K^{\circ}}$ containing $x$. As $|D_{l}u|\leq1$
on  this open set, the strong maximum principle implies that $D_{l}u$
is constant $1$ there. Now consider the line containing $x$ in the
$l$ direction and suppose it intersects the boundary of the open
set in $y$. If $y\in\partial U$, then moving along the line we see
that $u=d_{K^{\circ}}$,%
\footnote{Actually, along this segment we have $u(\cdot)=d_{K^{\circ}}(\cdot,y)\ge d_{K^{\circ}}(\cdot)$.%
} contradicting the choice of $x$. If $y\in U$, then we must have
$u(y)=d_{K^{\circ}}(y)$. Also since $D_{l}u(y)=1$, $l$ must be
the direction that connects $y$ to (one of) its $d_{K^{\circ}}$-closest
point on $\partial U$. Again we can see that along the line we have
$u=d_{K^{\circ}}$, which is a contradiction.\end{proof}
\begin{defn}
The set 
\[
\{x\in U\,\mid\, u(x)=d_{K^{\circ}}(x)\}=\{x\in U\,\mid\,\gamma_{K}(Du(x))=1\}
\]
is called the \textbf{plastic} region and is denoted by $P$. The
set 
\[
\{x\in U\,\mid\, u(x)<d_{K^{\circ}}(x)\}=\{x\in U\,\mid\,\gamma_{K}(Du(x))<1\}
\]
is called the \textbf{elastic} region and is denoted by $E$.\end{defn}
\begin{lem}
\label{segment is plastic}Suppose $x\in P$, and $y\in\partial U$
satisfies $u(x)=\gamma_{K^{\circ}}(x-y)$. Then the segment $\overline{xy}$
is entirely plastic.%
\footnote{This segment is obviously inside $U$.%
}\end{lem}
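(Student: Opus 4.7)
The plan is to show that $u$ and $d_{K^\circ}$ agree all along the segment, which will place every point of $\overline{xy}$ in $P$. Parametrize the segment by $z_{t}:=(1-t)x+ty$ for $t\in[0,1]$, so $z_{0}=x$ and $z_{1}=y$. A direct computation gives $\gamma_{K^{\circ}}(z_{t}-y)=(1-t)\gamma_{K^{\circ}}(x-y)=(1-t)u(x)$, hence, since $y\in\partial U$,
\[
d_{K^{\circ}}(z_{t})\le(1-t)u(x).
\]

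Next I would establish the matching lower bound $u(z_{t})\ge(1-t)u(x)$. The key fact is that the pointwise constraint $\gamma_{K}(Du)\le 1$ forces $u$ to be $1$-Lipschitz with respect to the dual norm $\gamma_{K^{\circ}}$. Indeed, for any two points $a,b\in U$ joined by a segment inside $U$, the fundamental theorem of calculus together with the duality inequality $v\cdot w\le\gamma_{K}(v)\gamma_{K^{\circ}}(w)$ (valid by definition of the polar) gives $|u(a)-u(b)|\le\gamma_{K^{\circ}}(a-b)$. Applying this with $a=x$ and $b=z_{t}$ yields
\[
u(x)-u(z_{t})\le\gamma_{K^{\circ}}(x-z_{t})=t\,u(x),
\]
so $u(z_{t})\ge(1-t)u(x)$.

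Combining the two estimates with the unconditional obstacle inequality $u\le d_{K^{\circ}}$ (which comes from the reformulation of the minimization problem cited in the introduction), I get
\[
(1-t)u(x)\le u(z_{t})\le d_{K^{\circ}}(z_{t})\le(1-t)u(x),
\]
so equality holds throughout. In particular $u(z_{t})=d_{K^{\circ}}(z_{t})$, which by the previous lemma means $z_{t}\in P$. Since $t\in[0,1]$ was arbitrary, the entire segment $\overline{xy}$ lies in the plastic region.

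There is no real obstacle here; the only thing to be careful about is justifying the 1-Lipschitz property of $u$ along the segment, which uses $u\in C^{1}(U)$ and that $\overline{xy}\subset U$ (ensured by the footnote). Once that Lipschitz estimate is in hand, the three inequalities sandwich $u(z_{t})$ against $d_{K^{\circ}}(z_{t})$ and the conclusion is immediate.
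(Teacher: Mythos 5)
Your proof is correct, and it is a close cousin of the paper's argument rather than a verbatim copy: both hinge on the fact that the constraint $\gamma_{K}(Du)\le 1$ makes $u$ $1$-Lipschitz for the norm $\gamma_{K^{\circ}}$, and that the distance to $\partial U$ decays at exactly unit $\gamma_{K^{\circ}}$-speed along the segment toward $y$. The paper phrases this differentially: it sets $v=u-d_{K^{\circ}}$, notes $D_{l}d_{K^{\circ}}=-1$ and $|D_{l}u|\le 1$ along $\overline{xy}$, so $v$ is monotone along the segment, and concludes $v\equiv 0$ from $v(x)=v(y)=0$ — no use of the obstacle inequality is needed. You instead work in integrated form: the triangle-type bound $d_{K^{\circ}}(z_{t})\le\gamma_{K^{\circ}}(z_{t}-y)=(1-t)u(x)$, the Lipschitz bound $u(z_{t})\ge(1-t)u(x)$, and the global inequality $u\le d_{K^{\circ}}$ sandwich everything into equality. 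Your version avoids differentiating $d_{K^{\circ}}$ along the segment (so it needs only the one-sided estimate, not the exact identity $d_{K^{\circ}}(z_{t})=\gamma_{K^{\circ}}(z_{t}-y)$), at the price of invoking the equivalence of the gradient-constrained and obstacle formulations; since the paper already relies on $u\le d_{K^{\circ}}$ implicitly (the very definitions of $P$ and $E$ exhaust $U$ only because of it), this is a harmless extra ingredient, and your justification of the Lipschitz step via $Du\cdot w\le\gamma_{K}(Du)\,\gamma_{K^{\circ}}(w)$ and $u\in C^{1}(U)$ is the right one.
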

\begin{proof}
Consider $v=u-d_{K^{\circ}}$. Then along the segment $\overline{xy}$
we have $D_{l}d_{K^{\circ}}=-1$, where $l$ is the direction of the
segment with $\gamma_{K^{\circ}}(l)=1$. Thus $D_{l}v\geq0$ along
the segment, as $|D_{l}u|\leq1$. Now since $v(x)=v(y)=0$, we have
$v\equiv0$. Therefore $u=d_{K^{\circ}}$ along the segment as desired.\end{proof}
\begin{thm}
We have 
\[
R_{K^{\circ},0}\subset E.
\]
\end{thm}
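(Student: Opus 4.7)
The plan is to argue by contradiction, mirroring the first lemma of this section but with the minimizer $u$ replacing $d_{K}$. Suppose for contradiction that some $x\in R_{K^{\circ},0}$ lies in $P$. Then $u(x)=d_{K^{\circ}}(x)=\gamma_{K^{\circ}}(x-y)=\gamma_{K^{\circ}}(x-z)$ for two distinct boundary points $y\neq z$, and the aim is to produce two unit (in $\gamma_{K^{\circ}}$) directions $l,l'$ at $x$ with $l\neq l'$ and $Du(x)\cdot l=Du(x)\cdot l'=-1$; strict convexity of $\gamma_{K^{\circ}}$ should then force $l=l'$, the desired contradiction.

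I would begin by applying Lemma \ref{segment is plastic} to both pairs $(x,y)$ and $(x,z)$, which places both segments $\overline{xy}$ and $\overline{xz}$ inside $P$ and gives $u\equiv d_{K^{\circ}}(\cdot,y)$ on the first and $u\equiv d_{K^{\circ}}(\cdot,z)$ on the second. Setting
\[
l:=\frac{y-x}{d_{K^{\circ}}(x)},\qquad l':=\frac{z-x}{d_{K^{\circ}}(x)},
\]
so that $\gamma_{K^{\circ}}(l)=\gamma_{K^{\circ}}(l')=1$, the same one-line linear computation used in the proof of the first lemma of this section (now applied to $d_{K^{\circ}}(\cdot,y)$ and $d_{K^{\circ}}(\cdot,z)$, which coincide with $u$ along the respective segments) yields $D_{l}u(x)=D_{l'}u(x)=-1$. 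The standing assumption $u\in C^{1}(U)$ then lets me rewrite these directional derivatives as the two linear equations $Du(x)\cdot l=Du(x)\cdot l'=-1$.

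The conceptual core, and the only part that is not bookkeeping, is the closing step. Because $x\in P$ we have $\gamma_{K}(Du(x))=1$, and by the standard duality $\gamma_{K}(w)=\max_{\ell\in K^{\circ}}w\cdot\ell$ applied to $w=-Du(x)$, the equalities $(-Du(x))\cdot l=(-Du(x))\cdot l'=1=\gamma_{K}(-Du(x))$ say that $l$ and $l'$ both realize the maximum and hence both lie in the exposed face of $K^{\circ}$ cut out by $-Du(x)$. Strict convexity of $\gamma_{K^{\circ}}$ (equivalently, of $K^{\circ}$) forces this exposed face to reduce to a single point, so $l=l'$ and thus $y=z$, contradicting $y\neq z$. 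The main obstacle I anticipate is purely one of transcription: verifying that the convexity–duality argument used earlier for $d_{K}$ transplants cleanly to $u$ on the plastic set, which is exactly what the $C^{1}$ regularity of $u$ and Lemma \ref{segment is plastic} are designed to supply.
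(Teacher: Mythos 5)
Your proposal is correct and follows essentially the same route as the paper: argue by contradiction, apply Lemma \ref{segment is plastic} to both segments $\overline{xy}$ and $\overline{xz}$ to get $Du(x)\cdot l=Du(x)\cdot l'=-1$ for the two $\gamma_{K^{\circ}}$-unit directions, and conclude via strict convexity of $K^{\circ}$. The only difference is that you spell out the final duality/exposed-face argument that the paper leaves implicit in the phrase ``which is a contradiction,'' which is a faithful elaboration rather than a different method.
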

\begin{proof}
Suppose to the contrary that $x\in R_{K^{\circ},0}\cap P$. Then there
are at least two points $y,z\in\partial U$ such that 
\[
d_{K^{\circ}}(x)=\gamma_{K^{\circ}}(x-y)=\gamma_{K^{\circ}}(x-z).
\]
Now by the above lemma we must have $\overline{xy},\overline{xz}\subset P$.
In other words $u=d_{K^{\circ}}$ on both $\overline{xy}$ and $\overline{xz}$.
This implies that 
\[
Du(x)\cdot\frac{y-x}{\gamma_{K^{\circ}}(y-x)}=-1=Du(x)\cdot\frac{z-x}{\gamma_{K^{\circ}}(z-x)},
\]
which is a contradiction.
\end{proof}

\section{The Case of $p\hspace{2bp}$-norms}

From now on we consider $\gamma_{K^{\circ}}$ to be the $p\hspace{1bp}$-norm,
$\gamma_{p}$, with $p\geq2$, where 
\[
\gamma_{p}((x_{1},x_{2})):=(|x_{1}|^{p}+|x_{2}|^{p})^{\frac{1}{p}}.
\]
We denote the corresponding ridge by $R_{p}$ and call it the $p\hspace{1bp}$-ridge.
In this case, the minimizer is in $C_{\textrm{loc}}^{1,1}(U)$ (see
\citet{MR1}).
\begin{lem}
The  $R_{p,0}$ of a disk, i.e. the set of points inside the disk
which have more than one $p$-closest point on its boundary, consists
of the union of its diagonals parallel to the coordinate axes.\end{lem}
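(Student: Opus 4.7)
The plan is to reduce to first-quadrant analysis by the reflective symmetries of the disk and the $p\hspace{1bp}$-norm, and then to prove uniqueness of the minimizer by a monotone-parameter argument on the Lagrange equations. Place $D$ at the origin with radius $r$, write $x=(a,b)$, parametrize $\partial D$ by $y=(r\cos\theta,r\sin\theta)$, and set $F(\theta)=|r\cos\theta-a|^{p}+|r\sin\theta-b|^{p}$. The reflections $y\mapsto(y_{1},-y_{2})$ and $y\mapsto(-y_{1},y_{2})$ preserve $\partial D$ and $\gamma_{p}$, and each fixes $x$ iff $x$ lies on the corresponding axis.

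To show every point of a coordinate diameter lies in $R_{p,0}$, consider $x=(a,0)$; the reflection $y\mapsto(y_{1},-y_{2})$ fixes $x$ and permutes the set $S(x)$ of closest points, so either $|S(x)|\ge 2$ automatically or $S(x)\subset\{(\pm r,0)\}$. To rule out the second alternative, I would Taylor-expand $F$ at $\theta=0$, using $r\cos\theta=r-r\theta^{2}/2+O(\theta^{4})$ and $r\sin\theta=r\theta+O(\theta^{3})$, to obtain
\[
F(\theta)-F(0)=-\tfrac{p}{2}\,r(r-a)^{p-1}\theta^{2}+r^{p}|\theta|^{p}+O(\theta^{4}),
\]
which is strictly negative for small $\theta\ne 0$ when $p>2$, since the $\theta^{2}$ term dominates. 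Thus $(r,0)$ is a strict local maximum of $F$, and $(-r,0)$ similarly; some minimum of $F$ must then have $y_{2}\ne 0$, and reflection supplies a second one. The center $x=(0,0)$ is handled by the direct comparison $F(\pi/4)=r^{p}\cdot 2^{1-p/2}<r^{p}=F(0)$, which places four minima on the diagonals $y_{1}=\pm y_{2}$. The vertical diameter is symmetric.

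For the reverse inclusion, assume $a,b>0$; other sign patterns follow by reflection. If $y_{2}<0$ then $y'=(y_{1},-y_{2})\in\partial D$ satisfies $|y'-x|_{p}<|y-x|_{p}$ because $||y_{2}|-b|<|y_{2}|+b$, and similarly for $y_{1}<0$, so any minimizer lies in the closed first quadrant. At the axis points one computes $F'(0)=-pb^{p-1}r\ne 0$ and $F'(\pi/2)=pa^{p-1}r\ne 0$, so $(r,0)$ and $(0,r)$ are not critical points of $F$ and hence not minimizers. Thus every minimizer lies in the open first quadrant, where the Lagrange condition $\nabla_{y}\gamma_{p}(y-x)\parallel(y_{1},y_{2})$ reads
\[
\frac{|y_{1}-a|^{p-2}(y_{1}-a)}{y_{1}}=\frac{|y_{2}-b|^{p-2}(y_{2}-b)}{y_{2}}=\mu,
\]
with both sides carrying the sign of $\mu$ since $y_{1},y_{2}>0$. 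The case $\mu<0$ would force $y_{i}<x_{i}$ for both $i$, contradicting $|y|=r>|x|$; so $\mu>0$ and $y_{i}>x_{i}$.

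Uniqueness now follows from a one-parameter monotonicity. Writing $\mu=(y_{1}-a)^{p-1}/y_{1}$ for $y_{1}>a$ and differentiating gives
\[
\frac{d\mu}{dy_{1}}=\frac{(y_{1}-a)^{p-2}\bigl((p-2)y_{1}+a\bigr)}{y_{1}^{2}}>0
\]
for $p\ge 2$ and $a>0$, so $y_{1}(\mu)$ is strictly increasing in $\mu$, and so is $y_{2}(\mu)$ by the analogous relation. Hence $y_{1}(\mu)^{2}+y_{2}(\mu)^{2}$ is strictly increasing in $\mu\in(0,\infty)$, and the constraint $|y|^{2}=r^{2}$ pins down a unique $\mu$, hence a unique minimizer. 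The main obstacle is the sign bookkeeping that isolates $\mu>0$: without ruling out the wrong-sign branch the monotone-parameter argument would not close, but once that case is secured the rest reduces to a routine one-parameter root count.
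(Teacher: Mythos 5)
Your proof is correct and takes essentially the same route as the paper's: the symmetry of the disk and of $\gamma_{p}$ yields the two coordinate diameters (the paper phrases this as an inscribed $p$-circle centered on a diameter touching the circle in at least two points, which is exactly your reflection-plus-local-expansion argument at the axis points), and Lagrange multipliers give uniqueness off the diameters, with your monotone-in-$\mu$ root count supplying the detail behind the paper's ``touching its boundary in exactly one point.'' The only remark worth adding is that your forward inclusion uses $p>2$, which is implicit in the statement anyway, since for $p=2$ the set in question degenerates to the center.
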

\begin{proof}
It is easy to see that we can inscribe a $p\hspace{1bp}$-circle inside
the disk centered on the prescribed diagonals and touching the boundary
of the disk in at least two points, so the $R_{p,0}$ contains them.
Using Lagrange multipliers we can see that centered at any other point
in the disk, we can inscribe a $p\hspace{1bp}$-circle touching its
boundary in exactly one point.
\end{proof}
Next let us take the circle $(x-a)^{2}+(y-b)^{2}=r^{2}$ and assume
that at the point $(x_{0},y_{0})$ on it, the inscribed $p\hspace{1bp}$-circle
$|x-c|^{p}+|y-d|^{p}=s^{p}$ is tangent to the circle. We are interested
in the direction $(c-x_{0},d-y_{0})$. This is the direction of the
segment starting at $(x_{0},y_{0})$ along which, $(x_{0},y_{0})$
is the $p\hspace{1bp}$-closest point on the circle to its points
(for nearby points).

At $(x_{0},y_{0})$ the tangents, hence the normals, of the circle
and the $p\hspace{1bp}$-circle are parallel. Therefore we have 
\[
2(a-x_{0},b-y_{0})=\lambda p((c-x_{0})|c-x_{0}|^{p-2},(d-y_{0})|d-y_{0}|^{p-2}).
\]
Thus the direction of $(c-x_{0},d-y_{0})$ is (parallel to) 
\begin{equation}
(f_{p}(a-x_{0}),f_{p}(b-y_{0})),
\end{equation}
where $f_{p}$ is the inverse of the one to one map $t\mapsto t|t|^{p-2}=\textrm{sgn}(t)|t|^{p-1}$.
It is easy to see that $f_{p}$ is odd, and $|f_{p}(t)|=|t|^{\frac{1}{p-1}}$.
Also for nonzero $t$ we have $f_{p}(t)=t|t|^{\frac{2-p}{p-1}}$,
and $f_{p}'(t)=\frac{1}{p-1}|t|^{\frac{2-p}{p-1}}$. The following
lemma is easy to prove.
\begin{lem}
\label{lemma insc p-circ}If $x_{0}\neq a\,,\, y_{0}\neq b$, then
there is an inscribed $p$-circle inside the circle which is touching
the circle only at $(x_{0},y_{0})$. 
\end{lem}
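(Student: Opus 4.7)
The plan is to pick a tiny $p$-circle in the one-parameter family already singled out in the paragraphs before the lemma, and show by a second-derivative comparison that it is inscribed. Concretely, I would set
\[
(c,d):=(x_{0},y_{0})+t\bigl(f_{p}(a-x_{0}),\,f_{p}(b-y_{0})\bigr),\qquad t>0,
\]
and let $s=s(t)>0$ be determined by $|x_{0}-c|^{p}+|y_{0}-d|^{p}=s^{p}$, so that the $p$-circle with center $(c,d)$ and radius $s$ passes through $(x_{0},y_{0})$ and is tangent there to the Euclidean circle. Both components of the direction vector are nonzero by the hypothesis $x_{0}\neq a$, $y_{0}\neq b$, and they share the signs of $a-x_{0}$ and $b-y_{0}$, so $(c,d)$ moves strictly into the Euclidean disk as $t$ increases from $0$, and $s(t)\to 0$ as $t\to 0^{+}$.

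Next I would compare second derivatives at $(x_{0},y_{0})$. Since $y_{0}\neq b$, the Euclidean circle is locally the graph of a smooth function $y=\phi(x)$, and since the direction vector above has a nonzero second component, also $y_{0}\neq d$, so the $p$-circle is a smooth graph $y=\psi_{t}(x)$ in the appropriate quadrant around $(c,d)$. By the tangency construction one has $\phi(x_{0})=\psi_{t}(x_{0})$ and $\phi'(x_{0})=\psi_{t}'(x_{0})$, so the relative position of the two curves near $x_{0}$ is controlled by $\psi_{t}''(x_{0})-\phi''(x_{0})$. Routine implicit differentiation yields
\[
\phi''(x_{0})=-\frac{r^{2}}{(y_{0}-b)^{3}},\qquad |\psi_{t}''(x_{0})|=\frac{(p-1)\,|x_{0}-c|^{p-2}\,s^{p}}{|y_{0}-d|^{2p-1}}.
\]
Substituting $|x_{0}-c|=t|a-x_{0}|^{1/(p-1)}$, $|y_{0}-d|=t|b-y_{0}|^{1/(p-1)}$ and $s=\Theta(t)$, one finds $|\psi_{t}''(x_{0})|=\Theta(1/t)$, whereas $|\phi''(x_{0})|$ is a fixed constant independent of $t$. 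A short sign check using $\operatorname{sgn}(y_{0}-d)=\operatorname{sgn}(y_{0}-b)$ shows that $\psi_{t}''(x_{0})$ and $\phi''(x_{0})$ have the same sign, i.e.\ the two curves are concave on the same side of their common tangent line.

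For all sufficiently small $t>0$ the above gives $|\psi_{t}''(x_{0})|>|\phi''(x_{0})|$, so by Taylor expansion $\psi_{t}$ lies strictly on the disk side of $\phi$ on a full punctured neighborhood of $x_{0}$; in other words, the $p$-circle is strictly inside the Euclidean disk near $(x_{0},y_{0})$ except at the contact point itself. Finally, since $s(t)\to 0$, for $t$ small enough the whole $p$-circle is contained in this local neighborhood, so the local containment upgrades to global containment and $(x_{0},y_{0})$ is the unique common point. The main subtlety I expect is the sign bookkeeping when writing down $\psi_{t}''$ and checking that the two concavities align; once that is under control, the rate $|\psi_{t}''|\sim 1/t$ beats the bounded $|\phi''|$ automatically and the lemma follows.
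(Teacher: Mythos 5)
Your computations are all correct (the tangency of the chosen $p$-circle, the formula $\phi''(x_{0})=-r^{2}/(y_{0}-b)^{3}$, the formula for $\psi_{t}''$, the sign bookkeeping, and the rate $|\psi_{t}''(x_{0})|=\Theta(1/t)$), and the underlying idea — the hypothesis $x_{0}\neq a$, $y_{0}\neq b$ forces the contact point away from the ``poles'' of the $p$-circle, so the $p$-circle has curvature there that blows up as its radius shrinks, while the circle's curvature is fixed — is exactly the natural route; the paper gives no proof at all (it declares the lemma easy), so there is nothing to compare against beyond this.

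The genuine gap is the last step. Taylor expansion at $x_{0}$ gives $\phi-\psi_{t}>0$ only on a punctured neighborhood \emph{whose size depends on $t$}: the error term is governed by the modulus of $\psi_{t}''$ away from $x_{0}$ (equivalently $\psi_{t}'''\sim t^{-2}$), and $\psi_{t}$ is only defined on the window $|x-c|\le s$ of width $\sim t$ anyway. So the neighborhood furnished by your expansion shrinks at the same rate as the $p$-circle, and the sentence ``since $s(t)\to0$, for $t$ small enough the whole $p$-circle is contained in this local neighborhood'' compares two sets that both scale like $t$ without controlling the constants — as written it is circular. To close it you need a uniform quantitative statement, e.g.\ rescale by $1/t$ about $(x_{0},y_{0})$: the $p$-ball becomes a fixed strictly convex body $B^{*}$ tangent at the origin to the line $z\cdot n=0$ ($n$ the inward unit normal), and positive curvature at the contact point plus compactness give $z\cdot n\ge\epsilon|z|^{2}$ on $B^{*}$ with equality only at $z=0$; since the rescaled disk is $\{z\cdot n\ge\tfrac{t}{2r}|z|^{2}\}$, containment (with unique contact) follows for $t<2r\epsilon$. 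Equivalently, prove the separation $\phi(x)-\psi_{t}(x)\ge(\tfrac{c}{t}-C_{\phi})(x-x_{0})^{2}$ uniformly on the whole window $|x-c|\le s$, not just infinitesimally at $x_{0}$. A second, minor omission: only the branch of the $p$-circle facing the circle is the graph $y=\psi_{t}(x)$, so the comparison says nothing directly about the rest of the $p$-circle; you need one more line noting that the remaining branch lies on the far side of that graph (toward the center $(c,d)$, hence strictly on the disk side) and well away from the opposite arc of the big circle.
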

Next, we introduce a notion of curvature for curves in the plane.
\begin{defn}
The \textbf{$p\hspace{1bp}$-curvature} of a curve $(x(t),y(t))$
in the plane is 
\begin{equation}
\kappa_{p}:=\frac{x'y''-y'x''}{(p-1)|x'|^{\frac{p-2}{p-1}}|y'|^{\frac{p-2}{p-1}}(|x'|^{\frac{p}{p-1}}+|y'|^{\frac{p}{p-1}})^{\frac{p+1}{p}}}.
\end{equation}

\end{defn}
It is easy to see that $\kappa_{p}$ does not change under the reparametrization
of the curve, hence it is an intrinsic quantity.

Note that the $p\hspace{1bp}$-curvature is not defined at the points
where the tangent (hence the normal) to the curve is parallel to the
coordinate axes. It also has the same sign as the ordinary curvature.
It is also easy to see that the $p\hspace{1bp}$-curvature of a $p\hspace{1bp}$-circle
is the inverse of its $p\hspace{1bp}$-radius at all points where
it is defined.

One notable difference between the usual curvature and the $p\hspace{1bp}$-curvature
for $p>2$ is that $\kappa_{p}$ does not appear in the first variation
of the $p\hspace{1bp}$-length.
\begin{lem}
Suppose $x_{i}\in U$ converge to $x\in U$, and $y\in\partial U$
is the unique $p$-closest point to $x$. If $y_{i}\in\partial U$
is a (not necessarily unique) $p$-closest point to $x_{i}$, then
$y_{i}$ converges to $y$.%
\footnote{This lemma does not need any assumptions about $\partial U$.%
}\end{lem}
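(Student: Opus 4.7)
The plan is to use a standard compactness and uniqueness argument, relying only on the fact that $\partial U$ is compact (since $U$ is bounded) and that $\gamma_{p}$ is a continuous function on $\mathbb{R}^{2}$. No further regularity of $\partial U$ is needed, which is consistent with the footnote.

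First I would verify that the sequence $(y_{i})$ is bounded: since $U$ is bounded, $\partial U$ is compact, so the $y_{i}$ already lie in a compact set. Hence any subsequence admits a further subsequence $y_{i_{k}}\to y^{*}\in\partial U$. The key step is to show that any such cluster point $y^{*}$ must be a $p$-closest point to $x$. By definition of $y_{i_{k}}$ as a $p$-closest point to $x_{i_{k}}$, for every $z\in\partial U$ we have
\[
\gamma_{p}(x_{i_{k}}-y_{i_{k}})\le\gamma_{p}(x_{i_{k}}-z).
\]
Passing to the limit using continuity of $\gamma_{p}$ together with $x_{i_{k}}\to x$ and $y_{i_{k}}\to y^{*}$ gives
\[
\gamma_{p}(x-y^{*})\le\gamma_{p}(x-z)\qquad\text{for all }z\in\partial U,
\]
so $y^{*}$ is a $p$-closest point to $x$ on $\partial U$.

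By the uniqueness hypothesis, $y^{*}=y$. Thus every convergent subsequence of $(y_{i})$ has limit $y$. Combined with the precompactness established in the first step, a standard subsequence argument (if $y_{i}\not\to y$, there would be a subsequence bounded away from $y$, from which we could extract a further subsequence converging to some $y^{*}\neq y$, contradicting what we just proved) yields $y_{i}\to y$.

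The only place where I expect to have to be careful is the limit step, but since $\gamma_{p}$ is continuous and $x_{i}\to x$, the inequality passes to the limit without incident; there is no genuine obstacle, and the proof is essentially forced by compactness plus uniqueness.
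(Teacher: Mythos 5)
Your proof is correct, and it is a genuinely different (and more streamlined) argument than the one in the paper. You pass directly to the limit in the defining minimality inequality $\gamma_{p}(x_{i_k}-y_{i_k})\le\gamma_{p}(x_{i_k}-z)$ along a convergent subsequence, conclude that every cluster point of $(y_i)$ is a $p$-closest point to $x$, and then invoke uniqueness plus the standard subsequence criterion; the only ingredients are compactness of $\partial U$ (from boundedness of $U$) and continuity of $\gamma_p$, which matches the footnote that no regularity of $\partial U$ is needed. The paper instead argues by contradiction with a quantitative geometric estimate: assuming a subsequence of $y_i$ stays outside a ball around $y$, it considers the $p$-circle of radius $d_p(x)$ about $x$ (which meets $\partial U$ only at $y$), extracts a positive $p$-distance $\delta$ between two disjoint compact sets, and derives $d_p(x)+\delta-\epsilon<d_p(x_i)$, contradicting the continuity of $d_p$. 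Your route buys simplicity and sidesteps the paper's triangle-inequality bookkeeping and the appeal to continuity of the distance function; the paper's route is more geometric and makes explicit the quantitative separation that fails when closest points do not converge, but for this lemma your compactness-plus-uniqueness argument is entirely sufficient.
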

\begin{proof}
Suppose that this does not happen. Then a subsequence of $y_{i}$,
which we denote it again by $y_{i}$, will remain outside an open
ball around $y$. Take this ball to be small enough so that $x$ lies
outside of it. Now consider a $p\hspace{1bp}$-circle around $x$
that touches $\partial U$ only at $y$ (The $p\hspace{1bp}$-radius
of the $p\hspace{1bp}$-circle is obviously $d_{p}(x)$). Consider
the subsets of this $p\hspace{1bp}$-circle and $\partial U$ that
lie outside the above ball. These subsets are compact sets that do
not intersect. Therefore they have a positive $p\hspace{1bp}$-distance,
$\delta$, of each other. Let $\epsilon<\delta/2$ be small enough
so that the $p\hspace{1bp}$-circle of $p\hspace{1bp}$-radius $\epsilon$
around $x$ is inside the above $p\hspace{1bp}$-circle, and does
not touch the ball around $y$. As $x_{i}$ approach $x$, they will
be inside the $p\hspace{1bp}$-circle with $p\hspace{1bp}$-radius
$\epsilon$. Thus 
\[
d_{p}(x)+\delta\le\gamma_{p}(x-y_{i})\le\gamma_{p}(x-x_{i})+\gamma_{p}(x_{i}-y_{i})<\epsilon+\gamma_{p}(x_{i}-y_{i}).
\]
Hence 
\[
d_{p}(x)+\delta-\epsilon<d_{p}(x_{i}).
\]
But, as $d_{p}$ is continuous, for small enough $\epsilon$ we must
have 
\[
d_{p}(x_{i})<d_{p}(x)+\delta/2,
\]
which is a contradiction.
\end{proof}
We now impose some extra restrictions on $\partial U$, in order to
study the regularity of $d_{p}$. We denote the inward normal to $\partial U$
by $\nu$.
\begin{assumption}
\label{assu: 1}We assume that at the points where the normal to $\partial U$
is parallel to one of the coordinate axes, the curvature of $\partial U$
is small. In the sense that, if we have $(t,b(t))$ as a nondegenerate
$C^{m,\alpha}$ $(m\geq2\,,\;0\leq\alpha<1)$ parametrization of $\partial U$
around $y_{0}:=(0,b(0))$, and $b'(0)=0$; then we assume $b'$ goes
fast enough to $0$ so that $b'(t)=c(t)|c(t)|^{p-2}$, where $c(0)=0$,
and $c$ is $C^{m-1,\alpha}$. 

Also we require $c'(0)$ to be less than $1$, and small enough so
that $1-c'(0)d_{p}(\cdot)$ does not vanish at the points inside $U$
that have $y_{0}$ as the only $p$-closest point on $\partial U$.%
\footnote{The requirement that $c'(0)<1$ is only used to show that there is
a $p$-circle in $U$ that touches $\partial U$ only at $y_{0}$,
but after examining this more carefully, it became apparent that this
restriction is superfluous.%
}
\end{assumption}
We will call these points the degenerate points of Assumption \ref{assu: 1}.
\begin{rem}
These assumptions will imply that there is a $p\hspace{1bp}$-circle
inside $U$ touching $\partial U$ only at $y_{0}$. \end{rem}
\begin{thm}
\label{thm d_p smooth}Suppose $\partial U$ is $C^{m,\alpha}$ $(m\geq2\,,\;0\leq\alpha<1)$
or analytic, and satisfies Assumption \ref{assu: 1}. Then outside
$R_{p,0}$, $d_{p}=d_{p}(\cdot,\partial U)$ is $C^{m,\alpha}$ or
analytic, except at the points with 
\begin{equation}
\kappa_{p}(y(x))d_{p}(x)=1.
\end{equation}
Where $y(x)$ is the $p$-closest point on $\partial U$ to $x$.\end{thm}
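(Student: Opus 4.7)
The strategy is to apply the implicit function theorem to the first-order optimality condition that defines the nearest point $y(x)$, and then to pass from $C^{m-1,\alpha}$ smoothness of $y$ to $C^{m,\alpha}$ smoothness of $d_{p}$ by an envelope argument. Fix $x_{0}\in U\setminus R_{p,0}$ with $\kappa_{p}(y(x_{0}))d_{p}(x_{0})\neq 1$; the closest point $y(x_{0})$ is unique, and by the preceding continuity lemma $y(x)$ stays in a small neighborhood of $y(x_{0})$ for all $x$ near $x_{0}$. Parametrize $\partial U$ there as a graph and split into two cases according to whether the segment from $y(x_{0})$ to $x_{0}$ is parallel to a coordinate axis.

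\emph{Non-degenerate case.} If the segment is not axis-parallel, write $\partial U$ locally as $(t,b(t))$ with $b\in C^{m,\alpha}$. Then $y(x)=(t,b(t))$ is a critical point of $t\mapsto\gamma_{p}(x-(t,b(t)))$, i.e.,
\[
G(x,t):=(x_{1}-t)|x_{1}-t|^{p-2}+(x_{2}-b(t))b'(t)|x_{2}-b(t)|^{p-2}=0,
\]
which is $C^{m-1,\alpha}$ jointly in $(x,t)$ in a neighborhood of $(x_{0},t_{0})$. Using $G=0$ to express $|x_{1}-t|^{p-1}$ in terms of $|x_{2}-b|$ and $|b'|$, together with the identity $d_{p}=|x_{2}-b|(1+|b'|^{p/(p-1)})^{1/p}$ at a critical point and the explicit form of $\kappa_{p}$ for a graph, I will verify that $\partial_{t}G=0$ exactly when $\kappa_{p}(y)d_{p}(x)=1$. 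Under our hypothesis the IFT then produces $t(x)\in C^{m-1,\alpha}$, and hence $y(x)\in C^{m-1,\alpha}$.

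\emph{Degenerate case.} If the segment is axis-parallel, $y(x_{0})$ is a degenerate point of Assumption \ref{assu: 1}, so $b'(t)=c(t)|c(t)|^{p-2}$ with $c\in C^{m-1,\alpha}$ and $c(0)=0$. Applying $f_{p}$ to both sides of the critical equation converts it into the cleaner form
\[
\tilde{G}(x,t):=(x_{1}-t)+(x_{2}-b(t))c(t)=0,
\]
which is $C^{m-1,\alpha}$ jointly in $(x,t)$. Direct differentiation gives $\partial_{t}\tilde{G}=-1-|c(t)|^{p}+(x_{2}-b(t))c'(t)$, and at $t=0$ this equals $-1+\kappa_{p}(y_{0})d_{p}(x)$ once one observes (by plugging $b'=c|c|^{p-2}$ into the formula for $\kappa_{p}$) that $c'(0)=\kappa_{p}(y_{0})$. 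Therefore IFT applies whenever $\kappa_{p}(y_{0})d_{p}(x)\neq 1$, yielding $t(x)\in C^{m-1,\alpha}$.

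\emph{Passing to $C^{m,\alpha}$ for $d_{p}$, and the main difficulty.} Since $t(x)$ is a critical point of $t\mapsto\gamma_{p}(x-(t,b(t)))$, the envelope theorem removes the chain-rule term and gives
\[
\partial_{x_{1}}d_{p}(x)=\frac{(x_{1}-t(x))|x_{1}-t(x)|^{p-2}}{d_{p}(x)^{p-1}},\qquad\partial_{x_{2}}d_{p}(x)=\frac{(x_{2}-b(t(x)))|x_{2}-b(t(x))|^{p-2}}{d_{p}(x)^{p-1}}.
\]
In the non-degenerate case $x-y(x)$ is bounded away from the coordinate axes, so both expressions are $C^{m-1,\alpha}$, and $d_{p}\in C^{m,\alpha}$. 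The degenerate case is the main obstacle: $\nabla\gamma_{p}$ itself is only finitely smooth at $x-y(x)$ there, so the naive formula fails to give the claimed regularity. However, substituting the critical-point identity $x_{1}-t=-(x_{2}-b)c(t)$ and then $c|c|^{p-2}=b'$ collapses the first component to $-(x_{2}-b)^{p-1}b'(t)/d_{p}^{p-1}$ and the second to $(x_{2}-b)^{p-1}/d_{p}^{p-1}$ (using $x_{2}-b>0$ locally), both manifestly $C^{m-1,\alpha}$ since $d_{p}$ is bounded below. This cancellation, powered by the precise structure built into Assumption \ref{assu: 1}, is the non-obvious step, and the analytic case proceeds identically via the analytic IFT.
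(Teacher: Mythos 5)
Your proposal is correct in substance, but it takes a mechanically different route from the paper, so let me compare. The paper works with the two-variable $p$-normal map $F(t,d)=y(t)+d\,\mu(y(t))$, computes $\mu'$ and $\det DF=\gamma_{q}((a',b'))(1-\kappa_{p}d)$ (and $\det DF(0,d)=1-c'(0)d$ at the degenerate points of Assumption \ref{assu: 1}), inverts $F$ by the inverse function theorem, and then reads $Dd_{p}$ and $Dt$ directly off $DF^{-1}$; you instead apply the implicit function theorem to the scalar first-order condition for the foot point and then upgrade to $C^{m,\alpha}$ via the envelope identity $Dd_{p}(x)=D\gamma_{p}(x-y(x))$, exploiting the Assumption \ref{assu: 1} structure $b'=c|c|^{p-2}$ to cancel the non-smoothness of $\gamma_{p}$ near the axis. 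The two nondegeneracy conditions are the same quantity up to a positive factor ($\partial_{t}G$ is a positive multiple of $\kappa_{p}d_{p}-1$ on the critical set, and $\partial_{t}\tilde{G}(x_{0},0)=-(1-c'(0)d_{p}(x_{0}))$), and your IFT local-uniqueness step plays the role of the paper's observation that invertibility of $F$ forces nearby points to have a unique $p$-closest point near $y_{0}$, so both arguments rest on the same obstruction $\kappa_{p}d_{p}=1$. What each buys: your version reduces everything to a one-dimensional equation and a clean envelope formula, which is lighter; the paper's version produces $t(x)$ and $d(x)$ simultaneously and, more importantly, the explicit $DF^{-1}$ (hence $Dt$) that is reused verbatim in Theorem \ref{laplace d_p} and in the free-boundary arguments later, so the extra matrix computation is not wasted. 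One point you should tighten in the degenerate case: your final expressions $-(x_{2}-b)^{p-1}b'(t)/d_{p}^{p-1}$ and $(x_{2}-b)^{p-1}/d_{p}^{p-1}$ are only "manifestly" $C^{m-1,\alpha}$ once you know $d_{p}^{p-1}$ itself is $C^{m-1,\alpha}$, which does not follow from $d_{p}=\gamma_{p}(x-y(x))$ alone (that composition passes through the non-smooth locus of $\gamma_{p}$); it does follow from the same critical-point substitution, since $d_{p}=(x_{2}-b)\,(1+|c|^{p})^{1/p}$ with $|c|^{p}=c\,b'\in C^{m-1,\alpha}$, or more simply by cancelling $(x_{2}-b)^{p-1}$ to get $\partial_{x_{1}}d_{p}=-b'\,(1+|c|^{p})^{-(p-1)/p}$ and $\partial_{x_{2}}d_{p}=(1+|c|^{p})^{-(p-1)/p}$, which is exactly the paper's formula $Dd_{p}=(-b',a')/\gamma_{q}((a',b'))$. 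With that one sentence added, your plan goes through, including the analytic case via the analytic implicit function theorem.
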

\begin{proof}
Suppose $y\in\partial U$ and $\nu(y)=(a(y),b(y))$ is not parallel
to the coordinate axes. Then as $\nu$ is smooth, it is not parallel
to the coordinate axes in a neighborhood of $y$. Also as $\partial U$
is smooth, there is a circle inside $U$ that is tangent to $\partial U$
only at $y$. Then $\nu(y)$ is also normal to the circle at $y$.
By Lemma \ref{lemma insc p-circ} there is a $p\hspace{1bp}$-circle
inside the circle and tangent to it (only) at $y$. Then we know that
the points on the segment joining $y$ to the center of the $p\hspace{1bp}$-circle
have $y$ as the $p\hspace{1bp}$-closest point to them on $\partial U$.
Also 
\begin{equation}
\mu(y)=\frac{1}{(|a|^{\frac{p}{p-1}}+|b|^{\frac{p}{p-1}})^{\frac{1}{p}}}(f_{p}(a),f_{p}(b))
\end{equation}
is the direction along which $y$ is the $p\hspace{1bp}$-closest
point. Note that by the above assumption on the $\partial U$, this
formula also gives the correct direction at points where one of $a$
or $b$ is zero. Also note that as $f_{p}(t)$ has the same sign as
$t$, $\mu$ is pointing inward $U$. 
\begin{defn}
We call $\mu$ the \textbf{inward $p\hspace{1bp}$-normal} to $\partial U$
at $y$.
\end{defn}
If two points have the same point on $\partial U$ as their $p\hspace{1bp}$-closest
point, then a $p\hspace{1bp}$-circle centered at them will be tangent
to $\partial U$ at that point. As the inward normal to a point on
a $p\hspace{1bp}$-circle uniquely determines the point, the two points
must be collinear with the point on the boundary. Hence the above
normal bundle $\mu$ to $\partial U$ gives at each point the only
direction along which the point is the $p\hspace{1bp}$-closest one.

Let $x_{0}$ be a point in $U-R_{p,0}$ and $y_{0}=y(x_{0})$.  Let
$y(t)=(a(t),b(t))$ for $|t|<L$, be a nondegenerate $C^{m,\alpha}$
or analytic parametrization of a segment of $\partial U$ with $y(0)=y_{0}$.
Also suppose that $(a+\epsilon,b)$ is in $U$ for small positive
$\epsilon$.  Now consider the map 
\[
F\,:\,(t,d)\mapsto y(t)+\mu(y(t))d
\]
from the open set $(-L,L)\times(0,\infty)$ into $\mathbb{R}^{2}$.
We have $F(0,d_{p}(x_{0}))=x_{0}$. We wish to compute $DF$ around
this point. First we deal with case that none of the $a',b'$ vanish
at $0$ (and hence around $0$). We have $\nu(t)=(-b'(t),a'(t))$
and 
\begin{equation}
\mu(t)=\frac{(-f_{p}(b'(t)),f_{p}(a'(t)))}{(|a'(t)|^{\frac{p}{p-1}}+|b'(t)|^{\frac{p}{p-1}})^{\frac{1}{p}}}.
\end{equation}
So we have 
\begin{eqnarray}
 & \mu'(t) & =\frac{(-(a')^{2}b''+a'b'a'',(b')^{2}a''-a'b'b'')}{(p-1)|a'|^{\frac{p-2}{p-1}}|b'|^{\frac{p-2}{p-1}}(|a'|^{\frac{p}{p-1}}+|b'|^{\frac{p}{p-1}})^{\frac{p+1}{p}}}\\
 &  & =-\kappa_{p}(a',b').\nonumber 
\end{eqnarray}
Hence 
\begin{equation}
DF=\left(\begin{array}{ccc}
a'-\kappa_{p}a'd &  & b'-\kappa_{p}b'd\\
\\
\frac{-b'|b'|^{\frac{2-p}{p-1}}}{(|a'|^{\frac{p}{p-1}}+|b'|^{\frac{p}{p-1}})^{\frac{1}{p}}} &  & \frac{a'|a'|^{\frac{2-p}{p-1}}}{(|a'|^{\frac{p}{p-1}}+|b'|^{\frac{p}{p-1}})^{\frac{1}{p}}}
\end{array}\right).
\end{equation}
Thus 
\begin{equation}
\det DF=(|a'|^{\frac{p}{p-1}}+|b'|^{\frac{p}{p-1}})^{\frac{p-1}{p}}(1-\kappa_{p}d)=\gamma_{q}((a',b'))(1-\kappa_{p}d),
\end{equation}
where $q$ is the dual exponent to $p$. This implies that $F$ is
$C^{m-1,\alpha}$ around $(0,d_{p}(x_{0}))$ with a $C^{m-1,\alpha}$
inverse, since $\kappa_{p}(y_{0})d_{p}(x_{0})\neq1$.

Next suppose one of the $a',b'$ vanishes. Thus the normal to $\partial U$
is parallel to the coordinate axes. We assume that $b'(0)=0$. The
other case is similar. Note that by the assumption on $\partial U$
we still have 
\[
\mu(t)=\frac{(-f_{p}(b'(t)),f_{p}(a'(t)))}{(|a'(t)|^{\frac{p}{p-1}}+|b'(t)|^{\frac{p}{p-1}})^{\frac{1}{p}}}.
\]
The problem is that $f_{p}$ is not differentiable at $0$. To avoid
this problem, we use the assumption that $b'$ goes fast enough to
$0$. In other words, $b'(t)=c(t)|c(t)|^{p-2}$ where $c(0)=0$, and
$c$ is $C^{m-1,\alpha}$. Hence $f_{p}(b'(t))=c(t)$. We can also
assume that $a(t)=t$ with no loss of generality, due to the inverse
function theorem. Therefore 
\begin{equation}
\mu(t)=\frac{(-c,1)}{(1+|c|^{p})^{\frac{1}{p}}},
\end{equation}
and 
\begin{eqnarray}
 & \mu'(t) & =\frac{(-c',0)}{(1+|c|^{p})^{\frac{1}{p}}}-\frac{c|c|^{p-2}c'(-c,1)}{(1+|c|^{p})^{\frac{p+1}{p}}}\nonumber \\
 &  & =\frac{-c'}{(1+|c|^{p})^{\frac{p+1}{p}}}(a',b').
\end{eqnarray}
(Note that at $t\neq0$ we have $\kappa_{p}=\frac{c'}{(1+|c|^{p})^{\frac{p+1}{p}}}$,%
\footnote{This equality only holds at the points where $c\ne0$. But if we define
$\kappa_{p}$ to be $c'$ at the points where $c=0$, $\kappa_{p}$
becomes continuous.%
} and this expression is continuous at $0$ with the value $c'(0)$
there.) Hence 
\begin{equation}
DF=\left(\begin{array}{ccc}
1-\frac{c'}{(1+|c|^{p})^{\frac{p+1}{p}}}d &  & c|c|^{p-2}-\frac{c'c|c|^{p-2}}{(1+|c|^{p})^{\frac{p+1}{p}}}d\\
\\
\frac{-c}{(1+|c|^{p})^{\frac{1}{p}}} &  & \frac{1}{(1+|c|^{p})^{\frac{1}{p}}}
\end{array}\right).
\end{equation}
Thus 
\begin{equation}
\det DF=(1+|c|^{p})^{\frac{p-1}{p}}(1-\frac{c'}{(1+|c|^{p})^{\frac{p+1}{p}}}d).
\end{equation}
Since we are only interested at $t=0$, 
\begin{equation}
\det DF(0,d)=1-c'(0)d.
\end{equation}
We assumed that $c'(0)$ is small enough so that inside $U$ this
determinant does not vanish. Therefore $F$ is $C^{m-1,\alpha}$ at
these points too, with a $C^{m-1,\alpha}$ inverse around them. 

Since $F\,:\,(t,d)\mapsto x$ is invertible in a neighborhood of $(0,d_{p}(x_{0}))$,
we have 
\begin{equation}
x=F(t(x),d(x))=y(t(x))+\mu(t(x))d(x).
\end{equation}
We also know that in general 
\[
x=y(x)+\mu(y(x))d_{p}(x).
\]
Note that $y(x)$ need not be unique for this formula to hold. If
we take $x$ close enough to $x_{0}$, then by continuity $y(x),d_{p}(x)$
will be close to $y(x_{0}),d_{p}(x_{0})$. And by invertibility of
$F$ we get 
\[
y(x)=y(t(x))\;,\; d_{p}(x)=d(x).
\]
As we showed that $x\mapsto(t,d)$ is locally $C^{m-1,\alpha}$, we
obtain that $d_{p}(x)$ and $y(x)$ are also locally $C^{m-1,\alpha}$.
Note that this also shows that all points around $x_{0}$ have a unique
$p\hspace{1bp}$-closest point around $y_{0}$, which by continuity
is the unique $p\hspace{1bp}$-closest point to them on $\partial U$.

 Next let us compute the first derivative of $d_{p}$ and $t$. Looking
at $DF$ we can compute 
\begin{equation}
DF^{-1}=\left(\begin{array}{ccc}
\frac{a'|a'|^{\frac{2-p}{p-1}}}{(1-\kappa_{p}d)(|a'|^{\frac{p}{p-1}}+|b'|^{\frac{p}{p-1}})} &  & \frac{-b'}{\gamma_{q}(a',b')}\\
\\
\frac{b'|b'|^{\frac{2-p}{p-1}}}{(1-\kappa_{p}d)(|a'|^{\frac{p}{p-1}}+|b'|^{\frac{p}{p-1}})} &  & \frac{a'}{\gamma_{q}(a',b')}
\end{array}\right).
\end{equation}
Which implies 
\begin{equation}
Dd_{p}(x)=(\frac{-b'}{\gamma_{q}((a',b'))},\frac{a'}{\gamma_{q}((a',b'))}),
\end{equation}
and 
\begin{equation}
Dt(x)=\frac{(a'|a'|^{\frac{2-p}{p-1}},b'|b'|^{\frac{2-p}{p-1}})}{(1-\kappa_{p}d)(|a'|^{\frac{p}{p-1}}+|b'|^{\frac{p}{p-1}})}.
\end{equation}
Specially note that as $a',b',t$ are $C^{m-1,\alpha}$, $d_{p}$
is $C^{m,\alpha}$. A similar conclusion can be made for the degenerate
points noting that 
\begin{equation}
DF^{-1}=\left(\begin{array}{ccc}
\frac{1}{(1+|c|^{p})(1-\frac{c'}{(1+|c|^{p})^{\frac{p+1}{p}}}d)} &  & \frac{-c|c|^{p-2}}{(1+|c|^{p})^{\frac{p-1}{p}}}\\
\\
\frac{c}{(1+|c|^{p})(1-\frac{c'}{(1+|c|^{p})^{\frac{p+1}{p}}}d)} &  & \frac{1}{(1+|c|^{p})^{\frac{p-1}{p}}}
\end{array}\right).
\end{equation}
\end{proof}
\begin{thm}
\label{laplace d_p}We have%
\footnote{Outside $R_{p}$.%
} 
\begin{equation}
\Delta d_{p}=\frac{-(p-1)\tau_{p}\kappa_{p}}{1-\kappa_{p}d_{p}},\label{eq: laplace d_p}
\end{equation}
where 
\begin{equation}
\tau_{p}=\frac{(|b'|^{\frac{2}{p-1}}+|a'|^{\frac{2}{p-1}})|a'b'|^{\frac{p-2}{p-1}}}{(|a'|^{\frac{p}{p-1}}+|b'|^{\frac{p}{p-1}})^{\frac{2p-2}{p}}}
\end{equation}
is a positive reparametrization invariant quantity along $\partial U$.
At the degenerate points of Assumption \ref{assu: 1} we have $\Delta d_{p}=0$.\end{thm}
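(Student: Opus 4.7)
The idea is to differentiate the explicit formula for $Dd_p$ from Theorem~\ref{thm d_p smooth}. At a non-degenerate point $x_0$, the formula $Dd_p(x) = (-b'(t(x)),\,a'(t(x)))/\gamma_q(a',b')$ depends on $x$ only through the boundary parameter $t(x)$, so the Laplacian reduces to a one-variable chain-rule expression:
\begin{equation}
\Delta d_p \;=\; \frac{d}{dt}\!\left(\frac{-b'}{\gamma_q}\right)\partial_x t \;+\; \frac{d}{dt}\!\left(\frac{a'}{\gamma_q}\right)\partial_y t,
\end{equation}
with $q=p/(p-1)$, everything evaluated along $y(t(x))$.

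Expanding these $t$-derivatives produces the terms $-b''/\gamma_q$ and $a''/\gamma_q$ plus correction terms involving $\gamma_q' = (a'|a'|^{q-2} a''+b'|b'|^{q-2}b'')/\gamma_q^{q-1}$. Substituting the first-column formula $(\partial_x t,\partial_y t) = (a'|a'|^{q-2},\,b'|b'|^{q-2})/[(1-\kappa_p d_p)\gamma_q^{q}]$ from the previous theorem, the sum acquires a common prefactor $1/[(1-\kappa_p d_p)\gamma_q^{q+1}]$. The $a''$-, $b''$-, and $\gamma_q'$-contributions then collect into a scalar multiple of the single quantity $a'b''-b'a''$, which is precisely the numerator of $\kappa_p$. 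Using the identity $q(2p-2)/p = 2$ (so the $\gamma_q$-denominator of $\tau_p$ becomes $\gamma_q^2$) and matching the remaining powers of $|a'|,|b'|$ against those in the denominator of $\kappa_p$, everything collapses to the stated $-(p-1)\tau_p \kappa_p/(1-\kappa_p d_p)$. The bulk of the work is this algebraic regrouping---a careful bookkeeping of fractional powers, which I expect to be the main obstacle.

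At a degenerate point $x_0$ of Assumption~\ref{assu: 1}, the preceding formula is not directly applicable, since the inward $p$-normal involves $f_p$, which is not differentiable at $0$. Instead I will invoke the alternative $DF^{-1}$ derived at the end of Theorem~\ref{thm d_p smooth} via the substitution $b'(t)=c(t)|c(t)|^{p-2}$, whose second column reads
\begin{equation}
(Dd_p)_1 \;=\; -\,\frac{c|c|^{p-2}}{(1+|c|^p)^{(p-1)/p}}, \qquad (Dd_p)_2 \;=\; \frac{1}{(1+|c|^p)^{(p-1)/p}}.
\end{equation}
Both depend on $x$ only through $t(x)$. Differentiating in $t$ produces factors $|c|^{p-2}c'$ and $|c|^{p-1}\,\mathrm{sgn}(c)\,c'$ respectively, each of which vanishes at $t=0$ (where $c(0)=0$) for $p>2$; hence both terms in the chain-rule expression for $\Delta d_p$ vanish at $x_0$, giving $\Delta d_p(x_0)=0$.
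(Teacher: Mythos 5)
Your proposal is correct and, for the nondegenerate points, it is essentially the paper's own computation: write $Dd_p(x)$ as a function of $t(x)$ alone, differentiate by the chain rule using the expression for $Dt$ obtained from $DF^{-1}$ in Theorem \ref{thm d_p smooth}, and regroup the $a''$, $b''$ terms into a multiple of $a'b''-b'a''$ to produce $-(p-1)\tau_p\kappa_p/(1-\kappa_p d_p)$. The only divergence is at the degenerate points of Assumption \ref{assu: 1}: the paper argues via continuity of $\Delta d_p$ and the limit of the generic formula (rewritten with $b'=c|c|^{p-2}$) as $t\to 0$, whereas you differentiate the degenerate-chart expression for $Dd_p$ directly at $t=0$, using that $|c|^{p-2}c'$ and $|c|^{p-1}\operatorname{sgn}(c)c'$ vanish there for $p>2$ and that $t(x)$ is $C^1$ near such points --- a valid, slightly more direct variant that also makes explicit that this part of the statement is a $p>2$ phenomenon.
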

\begin{proof}
We know that 
\[
Dd_{p}(x)=(\frac{-b'}{\gamma_{q}(a',b')},\frac{a'}{\gamma_{q}(a',b')})=\frac{(-b'(t),a'(t))}{(|a'|^{\frac{p}{p-1}}+|b'|^{\frac{p}{p-1}})^{\frac{p-1}{p}}}.
\]
Therefore 
\begin{eqnarray*}
 &  & D_{11}d_{p}=\\
 &  & =[\frac{-b''}{(|a'|^{\frac{p}{p-1}}+|b'|^{\frac{p}{p-1}})^{\frac{p-1}{p}}}+\frac{b'(a'|a'|^{\frac{2-p}{p-1}}a''+b'|b'|^{\frac{2-p}{p-1}}b'')}{(|a'|^{\frac{p}{p-1}}+|b'|^{\frac{p}{p-1}})^{\frac{2p-1}{p}}}]D_{1}t\\
 &  & =\frac{[-|a'|^{\frac{p}{p-1}}b''+b'a'|a'|^{\frac{2-p}{p-1}}a'']a'|a'|^{\frac{2-p}{p-1}}}{(1-\kappa_{p}d)(|a'|^{\frac{p}{p-1}}+|b'|^{\frac{p}{p-1}})^{\frac{3p-1}{p}}}.
\end{eqnarray*}
Similarly 
\begin{eqnarray*}
 &  & D_{22}d_{p}=\\
 &  & =[\frac{a''}{(|a'|^{\frac{p}{p-1}}+|b'|^{\frac{p}{p-1}})^{\frac{p-1}{p}}}-\frac{a'(a'|a'|^{\frac{2-p}{p-1}}a''+b'|b'|^{\frac{2-p}{p-1}}b'')}{(|a'|^{\frac{p}{p-1}}+|b'|^{\frac{p}{p-1}})^{\frac{2p-1}{p}}}]D_{2}t\\
 &  & =\frac{[|b'|^{\frac{p}{p-1}}a''-b'a'|b'|^{\frac{2-p}{p-1}}b'']b'|b'|^{\frac{2-p}{p-1}}}{(1-\kappa_{p}d)(|a'|^{\frac{p}{p-1}}+|b'|^{\frac{p}{p-1}})^{\frac{3p-1}{p}}}.
\end{eqnarray*}
Hence
\begin{eqnarray*}
 &  & \Delta d_{p}=\\
 &  & =\frac{a''b'(|b'|^{\frac{2}{p-1}}+|a'|^{\frac{2}{p-1}})-b''a'(|b'|^{\frac{2}{p-1}}+|a'|^{\frac{2}{p-1}})}{(1-\kappa_{p}d)(|a'|^{\frac{p}{p-1}}+|b'|^{\frac{p}{p-1}})^{\frac{3p-1}{p}}}\\
 &  & =\frac{-(p-1)\kappa_{p}}{1-\kappa_{p}d}\frac{(|b'|^{\frac{2}{p-1}}+|a'|^{\frac{2}{p-1}})|a'b'|^{\frac{p-2}{p-1}}}{(|a'|^{\frac{p}{p-1}}+|b'|^{\frac{p}{p-1}})^{\frac{2p-2}{p}}}\\
 &  & =\frac{-(p-1)\tau_{p}\kappa_{p}}{1-\kappa_{p}d_{p}}.
\end{eqnarray*}

For the degenerate case, we just need to note that by our assumptions
$\Delta d_{p}$ is continuous. Thus if we rewrite the above formula
in this case (for $t\neq0$) we get 
\begin{equation}
\Delta d_{p}=\frac{-(p-1)\tau_{p}\kappa_{p}}{1-\kappa_{p}d}=\frac{-(p-1)(1+c^{2})|c|^{p-2}c'}{(1+|c|^{p})^{\frac{3p-1}{p}}}\frac{1}{1-\frac{c'd_{p}}{(1+|c|^{p})^{\frac{p+1}{p}}}}.
\end{equation}
And as $t\rightarrow0$ this expression goes to $0$. Hence $\Delta d_{p}=0$
in this case. \end{proof}
\begin{thm}
The $p$-ridge consists of $R_{p,0}$ and those points outside of
it at which 
\[
1-\kappa_{p}d_{p}=0.
\]
\end{thm}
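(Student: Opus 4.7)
The plan is to combine the two preceding theorems. Theorem \ref{thm d_p smooth} already takes care of one inclusion: if $x_0 \in U \setminus R_{p,0}$ and $\kappa_p(y(x_0))d_p(x_0) \ne 1$, then $d_p$ is $C^{m,\alpha}$ in a neighborhood of $x_0$, hence $C^{1,1}$, so $x_0 \notin R_p$. Also, $R_{p,0} \subset R_p$ by the very first lemma of Section~2 (two distinct closest points force non-differentiability). Thus the only thing that remains is the converse inclusion: if $x_0 \in U \setminus R_{p,0}$ satisfies $\kappa_p(y_0)d_p(x_0) = 1$, where $y_0 = y(x_0)$, then $d_p$ is \emph{not} $C^{1,1}$ in any neighborhood of $x_0$.

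To prove this I would argue by contradiction via Theorem \ref{laplace d_p}. First I verify that $y_0$ cannot be one of the degenerate points of Assumption \ref{assu: 1}: at such a point $\kappa_p = c'(0)$ and the assumption explicitly requires $1 - c'(0)d_p \ne 0$ inside $U$. Hence at $y_0$ neither $a'$ nor $b'$ vanishes, so $\tau_p(y_0) > 0$. Moreover $\kappa_p(y_0) = 1/d_p(x_0) > 0$. By the continuity lemma for closest points, $y(x) \to y_0$ as $x \to x_0$, and since $\kappa_p$ and $\tau_p$ are continuous on the smooth part of $\partial U$, the numerator $-(p-1)\tau_p(y(x))\kappa_p(y(x))$ stays bounded away from zero in a punctured neighborhood of $x_0$. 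The denominator $1 - \kappa_p(y(x))d_p(x)$ tends to $0$. Hence
\[
\bigl|\Delta d_p(x)\bigr| = \left|\frac{-(p-1)\tau_p(y(x))\kappa_p(y(x))}{1 - \kappa_p(y(x))d_p(x)}\right| \longrightarrow \infty
\]
as $x \to x_0$ in $U \setminus R_{p,0}$ (where this formula is valid by Theorem \ref{laplace d_p}).

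If $d_p$ were $C^{1,1}$ in a neighborhood of $x_0$, then $D^2 d_p \in L^\infty$ there, and in particular $\Delta d_p$ would be bounded. This contradicts the blow-up above, so $x_0 \in R_p$, completing the characterization.

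The main obstacle I anticipate is the bookkeeping around degenerate directions and the need to justify that the Laplacian formula genuinely applies on a full punctured neighborhood of $x_0$ (rather than merely on a sequence). Ruling out that $y(x)$ hops to degenerate points of $\partial U$ as $x \to x_0$ is handled by continuity of $y(\cdot)$ together with the fact that $y_0$ itself is non-degenerate; the rest is an estimate on numerator and denominator that is routine once $\tau_p(y_0)\kappa_p(y_0) > 0$ is established.
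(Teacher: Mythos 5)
Your proposal is correct and takes essentially the same route as the paper: dispose of the degenerate case via the standing assumption that $1-c'(0)d_{p}$ does not vanish in $U$, then contradict $C^{1,1}$ regularity through the blow-up of $\Delta d_{p}$ given by Theorem \ref{laplace d_p}. The one point you flag — guaranteeing that the Laplacian formula is actually available at points approaching $x_{0}$, rather than on a full punctured neighborhood where other ridge or conjugate points could interfere — is handled in the paper by approaching $x_{0}$ from a neighborhood of the segment $\overline{x_{0}y_{0}}$, on which $y(x)=y_{0}$ and $d_{p}<d_{p}(x_{0})$ give $1-\kappa_{p}d_{p}>0$, so $d_{p}$ is $C^{2}$ there and the formula applies; with that refinement your argument coincides with the paper's.
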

\begin{proof}
First note that we assumed as before that $1-c'(0)d_{p}$ does not
vanish inside $U$.%
\footnote{If we do not assume this, we cannot carry over the proof of this theorem
to the case of degenerate points of Assumption \ref{assu: 1}, since
$\Delta d_{p}=0$ there.%
} Thus we do not need to consider the degenerate case.

So far we showed that the $p\hspace{1bp}$-ridge contains $R_{p,0}$,
and every other point at which $1-\kappa_{p}d_{p}\neq0$ is not in
the $p\hspace{1bp}$-ridge. Now suppose $1-\kappa_{p}(y(x))d_{p}(x)=0$.
Then $\kappa_{p}(y)=\frac{1}{d_{p}(x)}>0$. We claim that $\Delta d_{p}$
blows up as we approach $x$ from a neighborhood of the line segment
$\overline{xy}$. The reason is that on this segment $1-\kappa_{p}d_{p}\neq0$
(hence also in a neighborhood of it by continuity). Thus we can apply
the above theorems to show that $d_{p}$ is at least $C^{2}$ on a
neighborhood of the segment, and its Laplacian is given by the formula
(\ref{eq: laplace d_p}). Therefore $d_{p}$ blows up, and can not
be $C^{1,1}$ in any neighborhood of $x$.\end{proof}
\begin{thm}
\label{thm: ridge is elastic}The $p$-ridge is elastic.\end{thm}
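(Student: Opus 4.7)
The plan is to reduce to the ``curvature ridge'' $\{1 - \kappa_p d_p = 0\}$ part of $R_p$ identified in the preceding theorem, and then derive a contradiction from the $C^{1,1}_{\mathrm{loc}}(U)$ regularity of $u$ by exploiting the blow-up of $\Delta d_p$ along the plastic segment issuing from such a point.

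First, the preceding theorem expresses $R_p = R_{p,0} \cup \{x \notin R_{p,0} : \kappa_p(y(x))\,d_p(x) = 1\}$, and the earlier theorem on $R_{K^\circ,0}$ already gives $R_{p,0} \subset E$, so it suffices to handle a point $x$ of the second type. I will suppose for contradiction that $x \in P$. By Lemma~\ref{segment is plastic}, the segment $\overline{xy(x)}$ is entirely plastic, i.e.\ $u = d_p$ on it. A short triangle-inequality argument using the strict convexity of $\gamma_p$ and the uniqueness of $y(x)$ shows that the open segment $(x, y(x))$ lies outside $R_{p,0}$ with $y(x)$ as its unique closest point. Moreover $y(x)$ cannot be a degenerate point of Assumption~\ref{assu: 1}: the non-vanishing clause of that assumption would be violated, since $\kappa_p d_p = 1$ at $y(x)$ reduces exactly to $1 - c'(0) d_p(x) = 0$ there.

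On the open segment, $\kappa_p(y(z))\,d_p(z) = d_p(z)/d_p(x) < 1$, so Theorem~\ref{thm d_p smooth} gives smoothness of $d_p$ at each such $z$ and Theorem~\ref{laplace d_p} gives
\[
\Delta d_p(z) = \frac{-(p-1)\,\tau_p\,\kappa_p}{1 - \kappa_p\,d_p(z)}.
\]
Since $\tau_p\kappa_p$ is a positive quantity depending only on $y(x)$ while the denominator tends to $0^+$ as $z\to x$, we have $\Delta d_p(z) \to -\infty$ along the segment. Writing $e := (y(x)-x)/|y(x)-x|$ for the Euclidean direction of the segment, the identity $d_p(x+te) = (|y(x)-x|-t)\,\gamma_p(e)$ is affine in $t$, so $D_{ee}d_p\equiv 0$ on the segment; hence all the blow-up is normal: $D_{ff}d_p(z) \to -\infty$ for $f\perp e$.

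The contradiction will come from the $C^{1,1}$ control on $u$. On a compact neighborhood of $x$, $Du$ is Lipschitz with some constant $C$. For $z$ on the open segment near $x$, $u - d_p$ attains a local maximum at $z$ (since $u \leq d_p$ globally and equality holds on the segment), so $Du(z) = Dd_p(z)$. Combining $|u(z\pm\epsilon f) - u(z) \mp \epsilon Du(z)\!\cdot\! f| \le C\epsilon^2/2$ with the classical second-order Taylor expansion of $d_p$ at $z$ in the directions $\pm f$, adding the two resulting one-sided inequalities coming from $u(z\pm\epsilon f)\le d_p(z\pm\epsilon f)$, dividing by $\epsilon^2$ and sending $\epsilon\to 0$, yields the lower bound $D_{ff}d_p(z) \ge -C$. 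This contradicts $D_{ff}d_p(z) \to -\infty$.

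The main obstacle is the uniformity of this last inequality: the $o(\epsilon^2)$ remainder in the Taylor expansion of $d_p$ depends on $z$, and cannot be controlled uniformly up to $z=x$ since $x$ is a singular point of $d_p$. So the inequality must be extracted by letting $\epsilon\to 0$ at each fixed $z$ separately. What saves the argument is that the $C^{1,1}$ estimate for $u$ is genuinely uniform on the compact neighborhood of $x$, so the same constant $C$ serves every $z$ on the open segment near $x$, and the uniform lower bound is incompatible with the divergence of $D_{ff}d_p$ along the segment.
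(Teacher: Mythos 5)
Your proof is correct and follows essentially the same route as the paper: exclude $R_{p,0}$ and the degenerate case, use Lemma \ref{segment is plastic} to make the segment $\overline{xy(x)}$ plastic, note $\Delta d_{p}=D_{\xi\xi}^{2}d_{p}\to-\infty$ along it, and contradict the uniform $C^{1,1}$ bound on $u$ via the inequality $u\le d_{p}$ with equality on the segment. The paper phrases the last step with symmetric second difference quotients of $u$ rather than Taylor expansion of $d_{p}$ at each fixed point, but this is the same argument.
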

\begin{proof}
We already showed that $R_{p,0}$ is elastic. Therefore consider a
(nondegenerate) point $x$ not in $R_{p,0}$ at which $1-\kappa_{p}(y(x))d_{p}(x)=0$,
and suppose it is plastic. (Note that $\kappa_{p}(y(x))>0$.) Then
we know that the segment between $x$ and $y(x)$ is also plastic.
Hence along the segment $u=d_{p}$. As $u\leq d_{p}$ in general,
for $a$ in the segment we have 
\begin{equation}
\Delta_{h,\xi}^{2}u(a):=\frac{1}{h^{2}}(u(a-h\xi)+u(a+h\xi)-2u(a))\leq\Delta_{h,\xi}^{2}d_{p}(a).
\end{equation}
Where $\xi$ is the direction orthogonal to the direction of the segment,
$\zeta:=\frac{x-y}{|x-y|}$. Note that $d_{p}$ is linear along the
segment so $D_{\zeta\zeta}^{2}d_{p}$ vanishes there. Hence $\Delta d_{p}=D_{\xi\xi}^{2}d_{p}$
on the segment. Therefore 
\[
\underset{a\rightarrow x}{\lim}D_{\xi\xi}^{2}d_{p}(a)=\underset{a\rightarrow x}{\lim}\frac{-(p-1)\tau_{p}(y)\kappa_{p}(y)}{1-\kappa_{p}(y)d_{p}(a)}=-\infty.
\]
Thus for large positive $M$ we can choose $a$ close to $x$, and
$h(a)$ small enough, such that for $h\leq h(a)$ 
\[
\Delta_{h,\xi}^{2}u(a)\leq\Delta_{h,\xi}^{2}d_{p}(a)<-M.
\]
Fixing $a$ and taking the limit as $h\rightarrow0$ we get 
\[
D_{\xi\xi}^{2}u(a)\le-M.
\]
($D^{2}u$ must actually be interpreted as the Lipschitz constant
of $Du$.)%
\footnote{In fact $\Delta_{h,\xi}^{2}u(a)=\frac{1}{h}(D_{\xi}u(a+h_{1}\xi)-D_{\xi}u(a-h_{2}\xi))$
for some $0\le h_{i}\le h$. Thus we must have $|\Delta_{h,\xi}^{2}u(a)|\le2\|Du\|_{C^{0,1}}$.%
} But this contradicts $C^{1,1}$ regularity of $u$ around $x$.\end{proof}
\begin{lem}
\label{lemma 1-kd>0}$1-\kappa_{p}d_{p}>0$ on the free boundary.\end{lem}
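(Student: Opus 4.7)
The plan is a direct proof by contradiction that rules out both $1-\kappa_{p}d_{p}=0$ and $1-\kappa_{p}d_{p}<0$ at a free boundary point, using only the structural results already established in this section.

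First, I would observe that since $u$ and $d_{p}$ are continuous, the plastic region $P=\{u=d_{p}\}$ is relatively closed in $U$, so any free boundary point $x$ lies in $P$. Theorem \ref{thm: ridge is elastic} gives $R_{p}\subset E$, while $E\cap P=\emptyset$; hence $x\notin R_{p}$. In particular $x\notin R_{p,0}$, so $x$ has a unique $p$-closest point $y=y(x)$, and the preceding characterization of the $p$-ridge forces $1-\kappa_{p}(y)d_{p}(x)\neq0$. The degenerate points of Assumption \ref{assu: 1} require no separate treatment, because that assumption explicitly rules out zeros of $1-c'(0)d_{p}$ inside $U$.

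To exclude $1-\kappa_{p}(y)d_{p}(x)<0$, I would assume it for contradiction (so in particular $\kappa_{p}(y)>0$) and invoke Lemma \ref{segment is plastic} to conclude that the full segment $\overline{xy}$ lies in $P$. Parametrizing by $z_{s}=y+s\mu(y)$, where $\mu(y)$ is the inward $p$-normal at $y$ (a $p$-unit vector by direct calculation from its definition), gives $d_{p}(z_{s})=\gamma_{p}(s\mu(y))=s$. For each $s\in(0,d_{p}(x)]$ the point $z_{s}$ is plastic and therefore not in $R_{p,0}\subset E$, so $y$ remains its unique $p$-closest point, and
\[
1-\kappa_{p}(y(z_{s}))\, d_{p}(z_{s})=1-\kappa_{p}(y)\, s
\]
is affine in $s$, equal to $1$ at $s=0$ and to a negative value at $s=d_{p}(x)$. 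The intermediate value theorem then produces some $s_{0}\in(0,d_{p}(x))$ where the left side vanishes; the characterization of $R_{p}$ places $z_{s_{0}}\in R_{p}$, and Theorem \ref{thm: ridge is elastic} then places $z_{s_{0}}\in E$, contradicting $z_{s_{0}}\in\overline{xy}\subset P$.

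I do not expect any substantive difficulty here: once the segment is known to be plastic and $1-\kappa_{p}d_{p}$ is seen to vary affinely along it, the previous two theorems do all the work. The only matter worth checking is that $z_{s_{0}}$ is not a degenerate point in the sense of Assumption \ref{assu: 1}, which is automatic because the assumption already forbids $1-c'(0)d_{p}$ from vanishing anywhere in $U$.
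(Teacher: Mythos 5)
Your proposal is correct and follows essentially the same route as the paper: the free boundary lies in the plastic set, the ridge characterization plus Theorem \ref{thm: ridge is elastic} rule out $1-\kappa_{p}d_{p}=0$ there, and if the quantity were negative, the linear growth of $d_{p}$ along the plastic segment $\overline{xy}$ (Lemma \ref{segment is plastic}) would, by the intermediate value theorem, produce a ridge point inside the plastic segment, a contradiction. Your treatment of the degenerate points of Assumption \ref{assu: 1} matches the paper's remark that one replaces $1-\kappa_{p}d_{p}$ by $\det DF$, which is nonvanishing inside $U$ by that assumption.
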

\begin{proof}
Suppose $x_{0}$ is on the free boundary, and $y_{0}$ is the $p\hspace{1bp}$-closest
point on $\partial U$ to it. As the free boundary is part of the
plastic region, $1-\kappa_{p}(y_{0})d_{p}(x_{0})$ is nonzero. But
if it was negative we would have 
\[
1-\kappa_{p}(y_{0})d_{p}(x')=0
\]
for some $x'$ in the segment $\overline{x_{0}y_{0}}$. Since $d_{p}$
linearly grows along this segment and it is zero at $y_{0}$. Thus
$x'$ must be in the elastic region contradicting the fact that the
segment is plastic.

In the degenerate case of Assumption \ref{assu: 1} we need to replace
$1-\kappa_{p}d_{p}$ with $\det DF$. The proof is the same.
\end{proof}

\section{Domains with Corners}

We now relax some of the constraints on $\partial U$. We assume that
$\partial U$ consists of a finite number of (at least) $C^{2}$ (up
to the endpoints) disjoint arcs $S_{1},\cdots,S_{m}$. We denote by
$V_{i}$ the vertex $S_{i}\cap S_{i+1}$, and by $\alpha_{i}$ the
angle formed by $S_{i},S_{i+1}$.%
\footnote{We still assume that each component of $U$ is the interior of a simple
closed Jordan curve with finitely many holes.%
} 
\begin{defn}
We say the vertex $V_{i}$ is a \textbf{reentrant corner} if $\alpha_{i}\geq\pi$,
a \textbf{strict reentrant corner} if $\alpha_{i}>\pi$, and a \textbf{nonreentrant
corner} if $\alpha_{i}<\pi$.%
\footnote{When one of the normals to $\partial U$ at a nonreentrant corner
is parallel to one of the coordinate axes, we do not need to assume
Assumptions \ref{assu: 1} or \ref{assu: 2} hold there, except in
Section 7.

When $V_{i}$ is a reentrant corner and the normal to $S_{i}$ at
it is parallel to coordinate axes, either Assumption \ref{assu: 1}
must hold for $S_{i}$ at its endpoint $V_{i}$, or the modified version
of Assumption \ref{assu: 2} must hold, i.e. we assume that there
is no $p$-circle inside $U$ whose center is on the $p$-normal to
$S_{i}$ at $V_{i}$, and touches $\partial U$ only at $V_{i}$.%
}
\end{defn}
Let us look more closely at the arguments in the proof of regularity
of $d_{p}$. We see that as long as the point $y_{0}\in\partial U$
is not the $p\hspace{1bp}$-closest point on $\partial U$ to any
point in $U\backslash R_{p,0}$, we do not use any regularity assumption
about $\partial U$ at $y_{0}$. Therefore instead of Assumption \ref{assu: 1}
we can assume 
\begin{assumption}
\label{assu: 2}If $y_{0}\in\partial U$ is a point where the normal
to the boundary at it is parallel to one of the coordinate axes, then
there is no $p$-circle inside $U$ that touches its boundary only
at $y_{0}$.
\end{assumption}
In this case the $p\hspace{1bp}$-ridge can extend to $\partial U$
at $y_{0}$ in the direction of the normal to it. But it will still
remain inside the elastic region. (Note that by definition, the $p\hspace{1bp}$-ridge
is closed relative to $U$. Thus it remains outside a $U$-neighborhood
of the free boundary.) 

A sufficient condition for this assumption to hold, is that we can
parametrize $\partial U$ around these points as $(t,b(t))$ where
$b'(0)=0$ and $b''(0)>0$. This is true, for example, when $U$ is
a disk. 

It is also easy to see that as $p\hspace{1bp}$-circles have tangent
lines at every point, nonreentrant corners can not be the $p\hspace{1bp}$-closest
point on $\partial U$ to any point in $U$. 

We have the following generalization of Theorem \ref{thm d_p smooth}.
\begin{thm}
\label{thm: d_p smooth - corners}Suppose $\partial U$ is piecewise
$C^{m,\alpha}$ for $(m\geq2\,,\;0\leq\alpha<1)$ or piecewise analytic,
and satisfies Assumptions \ref{assu: 1} or \ref{assu: 2}. Then outside
$R_{p,0}$, $d_{p}=d_{p}(\cdot,\partial U)$ is $C^{m,\alpha}$ or
analytic around the points with $\kappa_{p}(y(x))d_{p}(x)\neq1$,
where $y(x)$ is the $p$-closest point on $\partial U$ to $x$ and
is not a reentrant corner. 

If $y(x)$ is a reentrant corner and the segment $\overline{xy}$
is between the inward $p$-normals to $y$, then $d_{p}$ is analytic
around $x$. And if the segment $\overline{xy}$ coincides with one
of the inward $p$-normals (or both of them if they coincide) and
$\kappa_{p}(y(x))d_{p}(x)\neq1$, where $\kappa_{p}$ is the $p$-curvature
of the corresponding boundary part, then $d_{p}$ is $C^{1,1}$ around
$x$. (But not $C^{2}$ in general.)\end{thm}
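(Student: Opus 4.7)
The plan is to split into the three cases of the conclusion and reduce each to either Theorem \ref{thm d_p smooth} applied to a smooth boundary piece, or to the explicit formula $d_p(x) = \gamma_p(x - V_i)$ valid near a reentrant corner.

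First, if $y(x)$ is not a reentrant corner, then by the remark preceding the theorem (nonreentrant corners cannot be $p$-closest points to any interior point, since $p$-circles have tangent lines everywhere), $y(x)$ must lie in the relative interior of one of the smooth arcs $S_i$. The proof of Theorem \ref{thm d_p smooth} applies verbatim to $S_i$ in a neighborhood of $y(x)$ and yields the claimed regularity of $d_p$ around $x$.

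Second, suppose $y(x) = V_i$ is a reentrant corner and $\overline{xy}$ lies strictly between the two inward $p$-normals $\mu_i, \mu_{i+1}$ attached at $V_i$ to $S_i, S_{i+1}$. The key step is to show that every $z$ sufficiently close to $V_i$ in the open sector bounded by the rays $V_i + \mathbb{R}_{\ge 0}\mu_i$ and $V_i + \mathbb{R}_{\ge 0}\mu_{i+1}$ has $V_i$ as its unique $p$-closest point on $\partial U$. Indeed, by the duality defining $\mu$, the $p$-circle of radius $\gamma_p(z - V_i)$ centered at $z$ is tangent at $V_i$ to a line whose Euclidean normal lies strictly between $\nu_i$ and $\nu_{i+1}$, and so by strict convexity both $S_i$ and $S_{i+1}$ curve away from this $p$-circle at $V_i$ and are avoided in a small neighborhood of $V_i$; compactness of $\partial U$ away from $V_i$ rules out distant intersections once $z$ is close enough to $V_i$. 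Hence $d_p(z) = \gamma_p(z - V_i)$ throughout this sector, which is real-analytic in $z$ off $V_i$ (with the familiar mild loss on the coordinate axes when $p$ is not an even integer).

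Third, suppose $\overline{xy}$ coincides with one of the $p$-normals, say $\mu_i$. A neighborhood of $x$ splits along the ray $V_i + \mathbb{R}_{\ge 0}\mu_i$ into two half-neighborhoods. On the corner side one has $d_p = \gamma_p(\cdot - V_i)$ by the second case. On the arc side, continuity of $y(\cdot)$ puts the closest point on the relative interior of $S_i$, and since $\kappa_p(V_i)d_p(x) \neq 1$ the map $F$ built from $S_i$ in Theorem \ref{thm d_p smooth} is a local diffeomorphism, so $d_p$ is as smooth as $S_i$ permits. Both representations agree in value and gradient on the interface ray: both give $y = V_i$, so the value agrees, and $Dd_p$ is determined by the eikonal equation $\gamma_q(Dd_p)=1$ together with the normalization $Dd_p \cdot \mu_i = -1$, which pin it down uniquely. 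Since $Dd_p$ is Lipschitz on each side and continuous across the interface, the triangle inequality along any segment crossing the interface shows $Dd_p$ is Lipschitz on a full neighborhood of $x$, i.e., $d_p$ is $C^{1,1}$ there. The two one-sided second derivatives generally disagree, so $C^2$ regularity fails in general.

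The main obstacle is the sector claim in case two: showing rigorously that for $z$ in the open $p$-normal sector near $V_i$, the $p$-circle of radius $\gamma_p(z - V_i)$ about $z$ does not meet $\partial U$ anywhere except at $V_i$. This blends the Euclidean/$p$-normal duality with strict convexity of $p$-circles and a local tangent-line comparison with each of $S_i, S_{i+1}$ at $V_i$; at a strict reentrant corner ($\alpha_i > \pi$) one must carefully track on which side of each tangent ray the corresponding arc lies, so that the one-sided comparison is applied consistently for both incoming arcs.
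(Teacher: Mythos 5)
Your case decomposition matches the paper's, and case one is fine, but there is a real gap in case two. You prove (modulo the comparison you yourself flag) that every $z$ \emph{sufficiently close to the corner} $V_i$ and lying in the open $p$-normal sector has $V_i$ as its unique $p$-closest point. That is not the statement the theorem needs: the point $x$ in the theorem is an arbitrary point of $U\setminus R_{p,0}$ with $y(x)=V_i$ and $\overline{xy}$ strictly between the $p$-normals, and $d_p(x)=\gamma_p(x-V_i)$ need not be small. What must be shown is that $d_p(z)=\gamma_p(z-V_i)$ for $z$ in a neighborhood of $x$, and for that the local tangent-line comparison at $V_i$ together with ``the circle is small, so it cannot reach the rest of $\partial U$'' is not available. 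The essential input, which your argument never uses, is the hypothesis on $x$ itself: since $V_i$ is the unique $p$-closest point to $x$, the $p$-circle $C_0$ centered at $x$ through $V_i$ lies in $U$ except at $V_i$; then the part of $C_0$ away from $V_i$ has positive distance from $\partial U$, so the $p$-circles through $V_i$ centered at nearby $z$ stay inside $U$ away from $V_i$, and near $V_i$ one uses that their tangent line at $V_i$ is still strictly between the two tangent lines $l_i,l_{i+1}$. This is exactly how the paper argues; if instead you read your ``throughout this sector'' globally, the claim is simply false, since far out along such a direction another piece of $\partial U$ (or $R_{p,0}$) can intervene.

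Case three has a second, smaller gap. You assert that for $z$ on the arc side of the ray the $p$-closest point lies in the relative interior of $S_i$, citing continuity of $y(\cdot)$; continuity only places $y(z)$ near $V_i$, and it could a priori be $V_i$ itself, so this needs an endpoint first-variation argument or some substitute. More importantly, to glue to a $C^{1,1}$ function you need $d_p$ on the arc side to be smooth \emph{up to} the interface ray, and applying the map $F$ of Theorem \ref{thm d_p smooth} only gives regularity on the open set where the closest point is interior to $S_i$. The paper resolves both points at once by extending $S_i$ smoothly past $V_i$ and showing, via the fact that two distinct $p$-circles meet in at most two points, that $d_p$ coincides on the arc side of $\mu_i$ with the distance to the extended boundary, to which the smooth theory applies on a full neighborhood; your gradient-matching via the eikonal equation $\gamma_q(Dd_p)=1$, $Dd_p\cdot\mu_i=-1$ is a legitimate replacement for the paper's explicit computation, but only after that one-sided identification has been established.
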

\begin{proof}
We just need to consider reentrant corners. Suppose $y_{0}=V_{1}$
is a strict reentrant corner and it is the $p\hspace{1bp}$-closest
point to $x_{0}$. Let $\mu_{1},\mu_{2}$ be the inward $p\hspace{1bp}$-normals
to respectively $S_{1},S_{2}$ at $y_{0}$. We assumed that if any
of the $\mu_{i}$'s is parallel to coordinate axes then one of the
Assumptions \ref{assu: 1} or \ref{assu: 2} holds.

Let $l$ be the tangent line at $y_{0}$ to $C_{0}$, where $C_{0}$
is the $p\hspace{1bp}$-circle around $x_{0}$ which passes through
$y_{0}$. $l$ must be between $l_{1},l_{2}$, the tangent lines to
$S_{1},S_{2}$ at $y_{0}$. Because the $p\hspace{1bp}$-circle is
inside the domain. If $l\neq l_{1},l_{2}$ then we claim that 
\[
d_{p}(x)=\gamma_{p}(x-y_{0})
\]
for $x$ close to $x_{0}$. Therefore $d_{p}$ is analytic there.
To prove the claim consider $C$, the $p\hspace{1bp}$-circle around
$x$ that passes through $y_{0}$. Then $C$ is in a small neighborhood
of $C_{0}$. As $C_{0}$ is inside $U$ except at $y_{0}$ and $U$
is open, we can see that $C$ is inside $U$ except possibly at a
neighborhood of $y_{0}$. But the tangent to $C$ at $y_{0}$ is close
to $l$ thus it is also strictly between $l_{1},l_{2}$. Therefore
$C$ is inside $U$ and touches $\partial U$ only at $y_{0}$.

Next we consider the case that $l=l_{1}$. Then $x_{0}$ is in the
direction of $\mu_{1}$. If $\mu_{1}$ is parallel to one of the coordinate
axes, Assumption \ref{assu: 1} must hold. We assume that $1-\kappa_{p}(y_{0})d_{p}(x_{0})\neq0$
where $\kappa_{p}(y_{0})$ is the $p\hspace{1bp}$-curvature of $S_{1}$
at $y_{0}$. Now let us extend $S_{1}$ in a smooth way so that $C_{0}$
still stays inside the new domain, and the new boundary stays below
$l$ outside of a small neighborhood of $x_{0}$. Then $d_{p}$ does
not change around $x_{0}$ on the side of $\mu_{1}$ that $S_{1}$
is. The reason is that the $p\hspace{1bp}$-circles around those points
which touch $S_{1}$ are either completely on the mentioned side of
$\mu_{1}$, or intersect the interior of the segment $\overline{x_{0}y_{0}}$.
The later $p\hspace{1bp}$-circles intersect $C_{0}$ close to $x_{0}$
on the side of $S_{1}$, since they also touch $S_{1}$ outside $C_{0}$
and close to $x_{0}$. Now note that two $p\hspace{1bp}$-circles
whose centers and $p\hspace{1bp}$-radii are close to each other will
intersect at exactly two almost antipodal points. Therefore the mentioned
$p\hspace{1bp}$-circles can not intersect the new boundary part,
which lies outside of $C_{0}$ on the opposite side of $S_{1}$. Here
we used the fact that two distinct $p\hspace{1bp}$-circle can intersect
at at most two points. To see this, note that two $p\hspace{1bp}$-circles
cannot intersect more than once on one side of the line that joins
their centers. Otherwise we can shrink or expand one of them so that
they become tangent at one point on that side of the line. But in
this situation the centers and the point of tangency must be collinear,
which is impossible.

Hence we can apply the regularity result for $d_{p}$ in the case
of smooth boundaries. Now consider a small disk around $x_{0}$ and
divide it by $\mu_{1}$. Then $d_{p}$ is smooth up to the boundary
of the half disk on the same side as $S_{1}$. On the other half,
$d_{p}$ is the $p\hspace{1bp}$-distance from $y_{0}$ hence it is
analytic. Let us compute $Dd_{p}(x_{0})$ from both sides. On the
side of $S_{1}$ we have 
\[
Dd_{p}(x_{0})=(\frac{-b'}{\gamma_{q}((a',b'))},\frac{a'}{\gamma_{q}((a',b'))})=\frac{\nu(y_{0})}{\gamma_{q}(\nu(y_{0}))},
\]
where $\nu(y_{0})$ is the inward normal to $S_{1}$ at $y_{0}$ (note
that this formula is also true in the degenerate case of Assumption
\ref{assu: 1}). On the other side we have 
\[
D_{i}d_{p}(x_{0})=D_{i}\gamma_{p}(x_{0}-y_{0})=\frac{((x_{0})_{i}-(y_{0})_{i})|(x_{0})_{i}-(y_{0})_{i}|^{p-2}}{\gamma_{p}(x_{0}-y_{0})^{p-1}}.
\]
But we have $\mu_{1}=\frac{x_{0}-y_{0}}{\gamma_{p}(x_{0}-y_{0})}$,
thus 
\begin{equation}
D_{i}d_{p}(x_{0})=\mu_{1i}|\mu_{1i}|^{p-2}.
\end{equation}
On the other hand 
\[
\mu_{1i}=\frac{f_{p}(\nu_{i}(y_{0}))}{\gamma_{q}(v(y_{0}))^{\frac{1}{p-1}}},
\]
where $f_{p}$ is the inverse of $t\mapsto t|t|^{p-2}$ as before.
Therefore the derivative of $d_{p}$ is the same from both sides and
it is $C^{1}$ around $x_{0}$. As $d_{p}$ is smooth on both sides
of $\mu_{1}$ up to $\mu_{1}$, we can say that it is in fact $C^{1,1}$
around $x_{0}$. But it is not in general $C^{2}$ there. To see this
we compute $\Delta d_{p}$ from both sides. On the side of $S_{1}$
we have 
\[
\Delta d_{p}(x_{0})=\frac{-(p-1)\tau_{p}(y_{0})\kappa_{p}(y_{0})}{1-\kappa_{p}(y_{0})d_{p}(x_{0})}.
\]
And on the other side 
\begin{eqnarray}
 & \Delta d_{p}(x_{0}) & =\sum D_{ii}^{2}\gamma_{p}(x_{0}-y_{0})\label{eq: laplace d_p nonre}\\
 &  & =(p-1)\sum\frac{|(x_{0})_{i}-(y_{0})_{i}|^{p-2}}{\gamma_{p}(x_{0}-y_{0})^{p-1}}-\frac{|(x_{0})_{i}-(y_{0})_{i}|^{2p-2}}{\gamma_{p}(x_{0}-y_{0})^{2p-1}}.\nonumber 
\end{eqnarray}
Now if, for example, $\kappa_{p}$ vanishes at $y_{0}$ and $(x_{0})_{i}\neq(y_{0})_{i}$
the two values will be different. 

If $\alpha_{1}=\pi$ (which means $\partial U$ is $C^{1}$ at $y_{0}$),
then we can repeat the same arguments. We can show that if a point
$x_{0}$ has $y_{0}$ as the only $p\hspace{1bp}$-closest point on
the boundary and satisfies $1-d_{p}(x_{0})(\kappa_{p})_{i}(y_{0})\neq0$,
where $(\kappa_{p})_{i}$ is the $p\hspace{1bp}$-curvature of $S_{i}$
(they can be different at $y_{0}$ as $\partial U$ is not necessarily
$C^{2}$ there), then $d_{p}$ is $C^{1,1}$ in a neighborhood of
$x_{0}$. But it is also not in general $C^{2}$.
\end{proof}
We can also show that if $1-\kappa_{p}(y_{0})d_{p}(x_{0})=0$ then
$x_{0}$ is in the ridge. The proof of this fact goes exactly as in
the case of smooth boundaries. The only modification is that we may
need to approach $x_{0}$ only from one side of the segment $\overline{x_{0}y_{0}}$.

\begin{thm}
\label{thm: ridge, corners}Suppose $\partial U$ is piecewise $C^{m,\alpha}$
for $(m\geq2\,,\;0\leq\alpha<1)$ or piecewise analytic, and satisfies
Assumptions \ref{assu: 1} or \ref{assu: 2}. Then the $p$-ridge
consists of $R_{p,0}$ and those points outside of it where $1-\kappa_{p}d_{p}=0$.
(The necessary adjustment must be made in the case of reentrant corners.)%
\footnote{\label{fn: ridge elastic}The $p$-ridge is also elastic in this case.
Although the proof of Theorem \ref{thm: ridge is elastic} must be
modified to show that if $x_{0}$ belongs to one of the $p$-normals
at a reentrant corner $y_{0}$ and $1-\kappa_{p}(y_{0})d_{p}(x_{0})=0$,
then $x_{0}$ is elastic. In this case we need to use the fact that
$Du=Dd_{p}$ on the interior of the segment $\overline{x_{0}y_{0}}$
when $x_{0}$ is plastic. Then moving in the direction $\xi$ orthogonal
to the segment, we must have $D_{\xi}u\le D_{\xi}d_{p}$ for some
points arbitrarily close to the segment. This easily gives a contradiction
with the boundedness of the $C^{0,1}$ norm of $Du$ around $x_{0}$.%
}
\end{thm}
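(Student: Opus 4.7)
The plan is to split the characterization into two inclusions and reduce each to results already available in the excerpt. The first inclusion, that a point $x_0 \notin R_{p,0}$ with $1-\kappa_p(y(x_0))d_p(x_0)\neq 0$ cannot lie in the $p$-ridge, is immediate from Theorem \ref{thm: d_p smooth - corners}: in every case treated there ($y_0$ a smooth point; $y_0$ a reentrant corner with the segment $\overline{x_0 y_0}$ strictly between the two inward $p$-normals; or $y_0$ a reentrant corner with the segment coinciding with one of the inward $p$-normals but $1-\kappa_p d_p\neq 0$) the theorem yields at least $C^{1,1}$ regularity of $d_p$ near $x_0$, so $x_0 \notin R_p$ by definition.

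For the reverse inclusion I would first recall that $R_{p,0}\subset R_p$ follows at once from the differentiability lemma in Section 2. It then remains to show that any $x_0 \notin R_{p,0}$ with $1-\kappa_p(y_0)d_p(x_0)=0$ (with the appropriate interpretation of $\kappa_p$ when $y_0$ is a reentrant corner) belongs to $R_p$. Following the proof from the smooth case, I would observe that $\kappa_p(y_0)=1/d_p(x_0)>0$, and that along the open segment $\overline{x_0 y_0}\setminus\{x_0\}$ the quantity $1-\kappa_p d_p$ stays strictly positive, since $d_p$ is linear on the segment while $\kappa_p(y_0)$ is fixed. On a neighborhood of this segment (minus $x_0$), Theorem \ref{thm: d_p smooth - corners} supplies the $C^2$ regularity of $d_p$ needed to invoke the Laplacian formula (\ref{eq: laplace d_p}), and letting $x\to x_0$ along such a neighborhood forces $\Delta d_p \to -\infty$, which is incompatible with $C^{1,1}$ regularity of $d_p$ in any neighborhood of $x_0$.

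The main obstacle is the adjustment flagged in the statement for reentrant corners. When $y_0$ is a reentrant corner and $\overline{x_0 y_0}$ coincides with the inward $p$-normal $\mu_1$ to $S_1$, only points on the $S_1$-side of $\mu_1$ have $S_1$ as the carrier of their $p$-closest point near $y_0$, so the Laplacian formula (\ref{eq: laplace d_p}) applies only on a one-sided neighborhood of the segment. I would verify, using the continuity of the closest-point map (cf.\ the lemma on convergence of $y_i\to y$) together with Theorem \ref{thm: d_p smooth - corners}, that such a one-sided neighborhood is genuinely available, and then conclude by noting that a one-sided blow-up $\Delta d_p\to-\infty$ still precludes $C^{1,1}$ regularity: if $Du$ were Lipschitz around $x_0$, the second difference quotients $\Delta_{h,\xi}^2 d_p$ would be uniformly bounded throughout a full neighborhood (in the same spirit as the footnote to the proof of Theorem \ref{thm: ridge is elastic}), contradicting the one-sided blow-up. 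The subcase where both inward $p$-normals coincide with the segment is handled identically by choosing the side on which the vanishing condition $1-\kappa_p d_p=0$ holds.
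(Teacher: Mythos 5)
Your proposal follows essentially the same route as the paper: the paper handles this theorem by remarking that the smooth-boundary argument (regularity from Theorem \ref{thm: d_p smooth - corners} where $1-\kappa_{p}d_{p}\neq0$, and blow-up of $\Delta d_{p}$ via formula (\ref{eq: laplace d_p}) where it vanishes) carries over verbatim, with the only modification being that one may need to approach $x_{0}$ from one side of the segment $\overline{x_{0}y_{0}}$ at a reentrant corner --- precisely the one-sided neighborhood and one-sided blow-up you describe. Only a small slip: in your final step the Lipschitz gradient should be $Dd_{p}$, not $Du$, since membership in $R_{p}$ concerns $C^{1,1}$ regularity of $d_{p}$ (the appeal to $u$ belongs to the separate elasticity claim in the footnote).
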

Also it should be noted that at points where $d_{p}$ is smooth, we
can still compute $\Delta d_{p}$ as in Theorem \ref{laplace d_p}
or formula (\ref{eq: laplace d_p nonre}).

\section{Proof of the Regularity}

In this section we are going to modify the proof presented in \citet{MR679313}
in order to be applicable to our situation. The main difficulty for
doing so is that we are dealing with the inward $p\hspace{1bp}$-normal
instead of the inward normal. 

First let us assume that $\partial U$ is $C^{m,\alpha}$ for $(m\geq3\,,\;0<\alpha<1)$
or analytic, and satisfies Assumption \ref{assu: 1} at degenerate
points. We parametrize it by $y(t)$ for $0\leq t\leq L$. As before
we denote the inward $p\hspace{1bp}$-normal by $\mu=\mu(y(t))$. 
\begin{thm}
There exists a nonnegative function $\delta(t)$, such that the plastic
set is 
\begin{equation}
P=\{x\,\mid\, x=y(t)+s\mu(t)\,,\;0\leq s\leq\delta(t)\,,\;0\leq t\leq L\}.
\end{equation}
Moreover we have $d_{p}(y(t)+s\mu(t))=s$ for $0\leq s\leq\delta(t)$. \end{thm}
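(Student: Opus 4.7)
The plan is to define $\delta(t)$ by
\[
\delta(t) := \sup\{s\ge 0 \mid y(t)+s\mu(t)\in P\},
\]
with $\sup\emptyset = 0$, and show that both inclusions in the asserted description of $P$ hold. The whole argument rests on two facts already established in the paper: (i) by Theorem \ref{thm: ridge is elastic} the $p$-ridge is elastic, so every $x\in P$ lies outside $R_{p,0}$ and therefore has a \emph{unique} $p$-closest boundary point $y(x)$; and (ii) by Lemma \ref{segment is plastic} the entire segment $\overline{x\,y(x)}$ is contained in $P$.

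For the forward inclusion $P\subset\{y(t)+s\mu(t)\}$, I would take $x\in P$ and write $y(x)=y(t_{0})$ for some $t_{0}$. Because the inward $p$-normal bundle $\mu$ gives, at each boundary point, the \emph{only} direction from which that point can be a $p$-closest point (this is precisely the uniqueness statement used in the proof of Theorem \ref{thm d_p smooth}), the vector $x-y(t_0)$ is a positive multiple of $\mu(t_0)$. Since $\gamma_{p}(\mu(t_0))=1$, we get $x=y(t_0)+s_{0}\mu(t_0)$ with $s_{0}=d_{p}(x)$. Lemma \ref{segment is plastic} then forces $y(t_0)+s\mu(t_0)\in P$ for all $0\le s\le s_0$, so $\delta(t_0)\ge s_0$ and $x$ lies in the right-hand set.

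For the reverse inclusion, I would fix $t$ and show that the set $A(t):=\{s\ge 0\mid y(t)+s\mu(t)\in P\}$ is a closed interval of the form $[0,\delta(t)]$. Closedness follows from continuity of $u$ and $d_{p}$, which makes $P=\{u=d_{p}\}$ closed in $U$. For the interval property: if $s'\in A(t)$ and $0\le s\le s'$, then $x':=y(t)+s'\mu(t)\in P$ has $y(t)$ as a $p$-closest boundary point (since $\gamma_p(x'-y(t))=s'=d_p(x')$ by plasticity), so by Lemma \ref{segment is plastic} the whole segment $\overline{x'\,y(t)}$ lies in $P$, and in particular $y(t)+s\mu(t)\in P$. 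Combined with closedness, this gives $A(t)=[0,\delta(t)]$, which is exactly the reverse inclusion. The identity $d_{p}(y(t)+s\mu(t))=s$ for $s\in[0,\delta(t)]$ is then immediate: on the plastic segment $u=d_p$, and along the $p$-normal ray from $y(t)$ the point $y(t)$ remains the unique $p$-closest boundary point, so $d_p(y(t)+s\mu(t))=\gamma_p(s\mu(t))=s$.

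The step I expect to require the most care is the identification of the direction $x-y(x)$ with $\mu(y(x))$ for an arbitrary $x\in P$: it relies on the fact that the map $y\mapsto \mu(y)$ from $\partial U$ to the set of inward $p$-normal directions is injective, which was only implicitly used earlier. The only other subtlety is degenerate boundary points from Assumption \ref{assu: 1}, where $\mu(t)$ is still well-defined by the formula in Theorem \ref{thm d_p smooth} and the same argument goes through without change.
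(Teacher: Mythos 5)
There is a genuine gap, and it sits exactly where you flagged the least concern: in the reverse inclusion. Your definition $\delta(t):=\sup\{s\ge0 \mid y(t)+s\mu(t)\in P\}$ drops the condition that $y(t)$ actually be the $p$-closest boundary point of $y(t)+s\mu(t)$, and your justification that $x'=y(t)+s'\mu(t)\in P$ has $y(t)$ as a $p$-closest point ``since $\gamma_p(x'-y(t))=s'=d_p(x')$ by plasticity'' is not valid: plasticity only gives $u(x')=d_p(x')$; it says nothing about which boundary point realizes $d_p(x')$. A $p$-normal ray from $y(t)$ is minimizing only up to its cut point, which lies in the $p$-ridge and is therefore elastic (Theorem \ref{thm: ridge is elastic}); beyond that point the ray can perfectly well re-enter the plastic region at points whose unique $p$-closest boundary point is a \emph{different} boundary point (think of a long thin domain with large $\eta$: the ray from a point of the lower boundary crosses the elastic ridge and then meets plastic points attached to the upper boundary). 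In that situation $A(t)$ is not an interval, your $\delta(t)$ is strictly larger than the length of the plastic segment attached to $y(t)$, the right-hand set $\{y(t)+s\mu(t),\,0\le s\le\delta(t)\}$ contains elastic points, and the identity $d_p(y(t)+s\mu(t))=s$ fails for $s$ near $\delta(t)$. So the reverse inclusion, as argued, breaks down. (Your forward inclusion is fine: $P\cap R_{p,0}=\emptyset$ gives a unique closest point, the tangency argument from the proof of Theorem \ref{thm d_p smooth} identifies the direction with $\mu$, and Lemma \ref{segment is plastic} fills in the segment.)

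The repair is the definition the paper actually uses: set $\delta(t)$ to be the supremum of those $s$ for which $y(t)+s\mu(t)$ lies in $P$ \emph{and} has $y(t)$ as its only $p$-closest point on $\partial U$. One then checks that this supremum is attained at a point of $U$ (it precedes the supremum of the minimizing parameters, which cannot reach $\partial U$ because closest points vary continuously), that the sup point is plastic because $P$ is closed, and that it still has $y(t)$ as its unique closest point because ``being a closest point'' passes to limits and plastic points avoid $R_{p,0}$; Lemma \ref{segment is plastic} then gives the whole segment $[0,\delta(t)]$, and $d_p(y(t)+s\mu(t))=s$ holds because $y(t)$ remains the closest point along it. With your weaker definition these conclusions are simply not available.
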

\begin{proof}
Remember that for $x\in U$, $y(x)$ is the $p\hspace{1bp}$-closest
point on $\partial U$ to $x$.We proved that if $x\in P$ then the
segment $\overline{xy(x)}$ is also in $P$. 

Consider the (connected) segment starting at the boundary point $y$
in the direction of the inward $p\hspace{1bp}$-normal at it, that
lies completely in $U$. Along this segment we can look at the supremum
of points that have $y$ as the only $p\hspace{1bp}$-closest point
to them on the boundary. This supremum can not be on $\partial U$,
as points close to that boundary point have $p\hspace{1bp}$-closest
point close to it. 

Now consider the supremum of points on the segment that have $y$
as the only $p\hspace{1bp}$-closest point, and are in $P$. This
supremum also belongs to $P$ as $P$ is closed. Also it has $y$
as the only $p\hspace{1bp}$-closest point on the boundary, as $P$
does not intersect the $p\hspace{1bp}$-ridge (specially $R_{p,0}$),
and the $p\hspace{1bp}$-closest point on the boundary changes continuously.
In addition, this supremum belongs to $U$ as it precedes the first
supremum.\end{proof}
\begin{rem}
Note that 
\begin{equation}
y(t)+(\delta(t)+\epsilon)\mu(t)\in E,
\end{equation}
where $0<\epsilon<\epsilon(t)$. The reason is that $x(t)=y(t)+\delta(t)\mu(t)$
is inside $U$ by the above proof.%
\footnote{When $\delta(t)=0$, we use the fact that a one-sided neighborhood
of $\partial U$ is inside $U$.%
}
\end{rem}
If we show that $\delta$ is continuous, then $\Gamma$, the free
boundary, is parametrized by $\delta(t)$ for $0\leq t\leq L$. 
\begin{thm}
\label{thm: free bdry jordan}$\delta$ is a continuous function of
$t$.\end{thm}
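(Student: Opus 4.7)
The plan is to establish both upper and lower semicontinuity of $\delta$ separately.

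\textbf{Upper semicontinuity.} Let $t_n\to t$ and $\delta(t_n)\to M$, and set $z:=y(t)+M\mu(t)$. The free-boundary points $x(t_n)=y(t_n)+\delta(t_n)\mu(t_n)$ lie in $P$ and converge to $z$, so closedness of $P$ gives $z\in P$. By continuity of $d_p$,
\[
d_p(z)=\lim_n d_p(x(t_n))=\lim_n\delta(t_n)=M=\gamma_p(z-y(t)),
\]
so $y(t)$ is a $p$-closest point to $z$. A second $p$-closest point would put $z\in R_{p,0}\subset E$ (Theorem \ref{thm: ridge is elastic} together with footnote \ref{fn: ridge elastic}), contradicting $z\in P$. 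Hence $y(t)$ is the unique closest point, and the structural description of $P$ forces $M\leq\delta(t)$.

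\textbf{Lower semicontinuity.} Suppose, for contradiction, $t_n\to t$ and $\delta(t_n)\to M<\delta(t)$. Pick $L'\in(M,\delta(t))$ and set $z:=y(t)+L'\mu(t)$. Since $L'<\delta(t)$, the point $z$ lies strictly inside the plastic segment from $y(t)$, so $z\in P$, $y(t)$ is its unique $p$-closest point, and $1-\kappa_p(y(t))L'>0$ (by Lemma \ref{lemma 1-kd>0} applied at the further point $x(t)$, since $1-\kappa_p s$ is affine and positive in $s$ on the plastic ray). By Theorem \ref{thm: d_p smooth - corners} the map $F:(t',s)\mapsto y(t')+s\mu(t')$ is a local diffeomorphism near $(t,L')$. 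The points $z_n:=y(t_n)+L'\mu(t_n)$ converge to $z$ and, for $n$ large, lie in the coordinate chart of $F$, so $y(t_n)$ is the unique $p$-closest point to $z_n$. Since $L'>\delta(t_n)$ for $n$ large, the structural theorem forces $z_n\notin P$, i.e.\ $z_n\in E$; hence $z\in\partial P=\Gamma$. Applying the same reasoning at every $s\in(M,\delta(t))$, the entire open segment
\[
\Sigma:=\{y(t)+s\mu(t):M<s<\delta(t)\}
\]
is contained in $\Gamma$, with $P$ continuing along $\mu(t)$ on both sides of each point of $\Sigma$ and $E$ lying transversely on the ``decreasing-$\delta$'' side.

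\textbf{The key step — ruling out a $p$-normal segment in $\Gamma$.} Set $w:=d_p-u\ge 0$; $w$ vanishes on $P\supset\Sigma$ and is positive on $E$. In a neighbourhood of an interior point $z_s\in\Sigma$, $d_p$ is smooth by Theorem \ref{thm d_p smooth} and $u$ is $C^{1,1}$, so $w\in C^{1,1}$. In adapted coordinates with $\mu(t)$ along $\Sigma$ and $\xi\perp\Sigma$ pointing into $E$, the function $w$ attains its minimum along $\Sigma$ with $Dw|_\Sigma=0$. From the elastic side, $-\Delta u=\eta$, while from the plastic side $-\Delta d_p$ is given by \eqref{eq: laplace d_p}; comparing these, a one-sided second-difference analysis for $w$ across $\Sigma$ (in the spirit of the argument used in Theorem \ref{thm: ridge is elastic}) should produce a sign inconsistency with the $C^{1,1}$ bound on $D^2u$, and thereby force $\Sigma=\emptyset$. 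This yields $M\geq\delta(t)$, completing the proof.

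\textbf{Main obstacle.} The principal difficulty lies in this last step: converting the rigid geometric picture (a segment of $\Gamma$ aligned with a $p$-normal) into a PDE contradiction under only $C^{1,1}$ regularity. One must interpret second derivatives as one-sided difference quotients, and carefully balance the elastic equation $-\Delta u=\eta$ against the explicit formula for $\Delta d_p$ on the plastic side, so that the resulting mismatch genuinely contradicts Lipschitz continuity of $Du$ near $\Sigma$. Everything else in the argument reduces to continuity of $d_p$, closedness of $P$, the structural theorem, and the local-diffeomorphism property of $F$, all of which are in hand.
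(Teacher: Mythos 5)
Your upper-semicontinuity argument is sound (it is essentially the paper's, phrased through closedness of $P$ and $R_{p,0}\subset E$ instead of through the elasticity of points just beyond $\delta(t_0)$), and your reduction of lower semicontinuity to the statement ``$\Gamma$ cannot contain a nondegenerate subsegment $\Sigma$ of an inward $p$-normal'' is legitimate. But that statement is precisely the heart of the theorem, and you have not proved it: your ``key step'' is a sketch, and the mechanism you invoke would not work. In Theorem \ref{thm: ridge is elastic} the contradiction with $C^{1,1}$ regularity comes from the blow-up of $\Delta d_{p}$ as $1-\kappa_{p}d_{p}\to0$; at your points of $\Sigma$, Lemma \ref{lemma 1-kd>0} guarantees $1-\kappa_{p}d_{p}>0$, so $\Delta d_{p}$ is bounded and no such blow-up is available. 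Worse, setting $w=d_{p}-u$, all you know near $\Sigma$ is $w\ge0$, $w=0$ and $Dw=0$ on $\Sigma$, and $\Delta w=\Delta d_{p}+\eta\ge\lambda>0$ on the elastic part; but the elastic set near $\Sigma$ may a priori be a union of arbitrarily thin slivers pinched between plastic normal segments (continuity of $\delta$ is exactly what you are trying to prove), and on such slivers $w$ can be arbitrarily small, which is perfectly compatible with the $C^{1,1}$ bound on $Du$. A one-sided second-difference in the direction $\xi\perp\Sigma$ crosses both elastic and plastic material and yields no sign contradiction.

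What the paper actually does to close this gap is genuinely two-staged, and neither stage appears in your proposal. First, it shows there is at least one sequence $t_n\searrow t_0$ with $\delta(t_n)\to\delta(t_0)$: if not, an open elastic set would touch the plastic segment along a subinterval, where $u=d_{p}$ and $Du=Dd_{p}$ are prescribed Cauchy data; uniqueness for the Cauchy problem for $\Delta u=-\eta$ plus analyticity then forces $u$ to equal an explicit linear-plus-quadratic function on the whole attached elastic component, hence $u=d_{p}$ on points beyond $\delta(t_0)$, a contradiction. Second, using these ``tall'' normals to pinch the elastic region into sets $E_n=E\cap F(A_n)$ whose boundary is free boundary except for a short top arc $B_n$, it runs a quantitative barrier argument: $\Delta\bigl(v-v(\bar{x})-\tfrac{\lambda}{2}|x-\bar{x}|^{2}\bigr)\ge0$ with $\lambda$ coming from Lemma \ref{lemma f-Dphi>0} and formula (\ref{eq: laplace d_p}), so the maximum principle forces $v$ to be at least of order $|\delta(t'_n)-\delta(s_n)|^{2}$ somewhere on $B_n$, while the $C^{1,1}$ bound on $v$ and the shortness of $B_n$ force $v$ there to tend to $0$. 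Your proposal contains neither the analytic-continuation step (which supplies the pinching normals) nor the quantitative growth-versus-Lipschitz comparison; without them the configuration $\Sigma\subset\Gamma$ has not been excluded, so the proof is incomplete at its essential point.
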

\begin{proof}
Suppose we want to show the continuity of $\delta$ at $t_{0}$ from
right. Let $t_{n}\searrow t_{0}$ and suppose that $\delta(t_{n})\rightarrow\delta(t_{0})+\epsilon$
where $\epsilon>0$. Then look at the points 
\[
x_{n}=y(t_{n})+(\delta(t_{0})+\epsilon/k)\mu(t_{n}),
\]
where $k$ is a big constant that makes $\tilde{x}=y(t_{0})+(\delta(t_{0})+\epsilon/k)\mu(t_{0})$
remain inside the elastic region $E$. Now for large $n$, $x_{n}$
is in the plastic region and we have $u(x_{n})=d_{p}(x_{n})$. But
by continuity of $u,d_{p}$ we must have $u(\tilde{x})=d_{p}(\tilde{x})$,
which is a contradiction. Therefore the limit points of the $\delta$
values of any sequence approaching $t_{0}$ must be in the interval
$[0,\delta(t_{0})]$. In particular if $\delta(t_{0})=0$ then $\delta$
is continuous at $t_{0}$.

Now we need to show that the limit points can not be less than $\delta(t_{0})$
either. First let us show that there is a sequence $t_{n}\searrow t_{0}$
such that $\delta(t_{n})\rightarrow\delta(t_{0})$. If this does not
happen then $\delta(t_{0})$ is outside the closed set of all limit
points of $\delta$-values of all sequences approaching $t_{0}$ from
right. Therefore an interval of the form $[\delta(t_{0})-\epsilon,\delta(t_{0})]$
is out of this set. This means that for $t$ close to $t_{0}$ we
have $\delta(t)<\delta(t_{0})-\epsilon$. So the open set 
\[
\{x\,\mid\, x=y(t)+s\mu(t)\,,\;\delta(t_{0})-\epsilon<s<\delta(t_{0})\,,\; t_{0}<t<t+\epsilon'\}
\]
is elastic and over it we have 
\[
-\Delta u=\eta.
\]
(This set is an open set by invariance of domain, as it is the image
of the locally injective continuous function $F(t,s)=y(t)+s\mu(t)$
introduced in the proof of Theorem \ref{thm d_p smooth}. We just
need to take $\epsilon,\epsilon'$ small enough.) Thus $u$ is analytic
there. Now on the segment 
\[
\{y(t_{0})+s\mu(t_{0})\;\mid\;\delta(t_{0})-\epsilon<s<\delta(t_{0})\}
\]
we have $u=d_{p}$ and $d_{p}$ is a linear function there. Also as
$u-d_{p}$ achieves its maximum there and it is $C^{1}$, we have
$Du=Dd_{p}$. But $Dd_{p}$ is constant along the segment and its
value depends only on $\mu(t_{0})$ there. Hence by uniqueness of
the solution of the Cauchy problem, locally, $u$ equals the linear
function whose derivative is $Dd_{p}$ plus a quadratic function of
the euclidean distance from the line containing the segment. (Note
that $-\Delta$ is elliptic so the segment is a noncharacteristic
surface for it.) As these two functions are analytic, they must be
equal on the component of the elastic region attached to the segment.
But this component contains the set 
\[
\{y(t_{0})+s\mu(t_{0})\;\mid\;\delta(t_{0})<s<\delta(t_{0})+\tilde{\epsilon}\}.
\]
Hence $u$ equals the linear function there, as the quadratic part
vanishes on it. However the linear function equals $d_{p}$ along
this segment too. Hence we get a contradiction.

Now suppose $s_{n}\searrow t_{0}$ and $\delta(s_{n})\rightarrow\delta(t_{0})-\epsilon$.
By taking subsequences we can assume $t_{n+1}<s_{n}<t_{n}$ where
$\delta(t_{n})\rightarrow\delta(t_{0})$. For $n$ large enough, $\delta(s_{n})$
is less than $\delta(t_{n}),\delta(t_{n+1})$. As we showed that $\delta$
is upper semicontinuous, it achieves its maximum over $[t_{n+1},t_{n}]$.
Let this maximum be $M_{n}$. The set $\{t\in(t_{n+1},t_{n})\,\mid\,\delta(t)<M_{n}\}$
must be open for the same reason. Let $(t'_{n+1},t'_{n})$ be the
component of it that contains $s_{n}$. Obviously the largest of $\delta(t'_{n+1}),\delta(t'_{n})$
equals $M_{n}$. Now take the set 
\begin{equation}
A_{n}=\{t'_{n+1}<t<t'_{n}\,,\, m_{n}<s<M_{n}\},
\end{equation}
where $m_{n}$ is the infimum of $\delta$ over $[t'_{n+1},t'_{n}]$.
We want $F(A_{n})$ to be an open set. Set 
\[
x_{0}=y(t_{0})+\delta(t_{0})\mu(t_{0}).
\]
Lemma \ref{lemma 1-kd>0} implies 
\[
1-\kappa_{p}(t_{0})d_{p}(x_{0})=1-\kappa_{p}(t_{0})\delta(t_{0})>0.
\]
(In the degenerate case of Assumption \ref{assu: 1} we need to replace
this expression with $\det DF$ at those points). We know that $F$
is injective on a neighborhood of $x_{0}$. We choose this neighborhood
small enough so that $1-\kappa_{p}d_{p}>0$ over it too. Let $n$
be large enough so that for $t_{n+1}<t<t_{n}$ the half-lines containing
inward $p\hspace{1bp}$-normals intersect this neighborhood. Then
as $1-\kappa_{p}d_{p}$ can change sign only once along these half-lines,
we conclude that it is positive on $F(A_{n})$ (Note that $M_{n}$
is close to $\delta(t_{0})$). Hence $F$ is at least $C^{1}$ on
$A_{n}$, and the determinant of its derivative never vanishes. This
implies that $F$ is injective on $A_{n}$. Otherwise the restriction
of $F$ to the segment connecting two points with the same $F$-value
would have a local extremum, which implies $DF$ has a zero eigenvalue
there, contradicting the assumptions. Therefore by invariance of domain
$F(A_{n})$ is open.

Now consider the function $v:=d_{p}-u$ over the open set 
\[
E_{n}:=E\cap F(A_{n}).
\]
We have $v>0$ and $\Delta v=\Delta d_{p}+\eta$ over it. First let
us show that $\Delta v$ has a positive infimum over $E_{n}$. Lemma
\ref{lemma f-Dphi>0} (which does not assume any regularity about
the free boundary) shows that $\Delta d_{p}+\eta>0$ on the free boundary.
Thus if we choose the neighborhood around $x_{0}$ small enough, then
as the tale of each segment (along the $p\hspace{1bp}$-normals) in
$F(A_{n})$ is in that neighborhood, $\Delta d_{p}+\eta$ will have
a positive infimum there. Also it is positive at the free boundary
point on each segment. We also know that 
\[
\Delta d_{p}+\eta=\eta-\frac{(p-1)\kappa_{p}\tau_{p}}{1-\kappa_{p}d_{p}}.
\]
Thus it can not change sign twice along these segments, as $d_{p}$
grows linearly along them. (In the degenerate case of Assumption \ref{assu: 1}
we have $\Delta d_{p}=0$ and this expression is certainly positive.)
Hence $\Delta d_{p}+\eta>0$ on $E_{n}$. And as it has a positive
infimum on the tale of segments, and by compactness a positive minimum
on their initial points on the free boundary, we can say it has a
positive infimum over $E_{n}$ (note that it is monotone along segments).
Therefore we have 
\[
\Delta v\geq\lambda>0
\]
on $E_{n}$ for some constant $\lambda$.%
\footnote{Note that $\lambda$ is independent of $n$.%
} Now let 
\begin{equation}
w(x):=v(x)-v(\bar{x})-\frac{\lambda}{2}|x-\bar{x}|^{2},
\end{equation}
where 
\[
\bar{x}=y(s_{n})+(\delta(s_{n})+\epsilon')\mu(s_{n})\in E_{n}.
\]
Then we have 
\[
\Delta w=\Delta v-\lambda\geq0,
\]
and $w(\bar{x})=0$. So by the maximum principle $\sup w\geq0$ and
is attained on $\partial E_{n}$. But on the free boundary and on
the segments 
\[
\{y(t)+s\mu(t)\,\mid\, t=t'_{n},t'_{n+1}\,,\, m_{n}\leq s\leq\delta(t)\},
\]
we have $v=0$ so $w<0$. Therefore $\sup w$ must be attained on
\[
B_{n}:=\{y(t)+M_{n}\mu(t)\,\mid\, t'_{n+1}\leq t\leq t'_{n}\}\cup\{y(t)+s\mu(t)\,\mid\, t=t'_{n},t'_{n+1}\,,\,\delta(t)<s\leq M_{n}\}.
\]
Hence for some $x'$ in this set we have 
\[
v(x')-v(\bar{x})-\frac{\lambda}{2}|x'-\bar{x}|^{2}=w(x')\geq0,
\]
or (noting that $M_{n}\ge\delta(t'_{n})\,,\,\delta(t'_{n+1})$ ) 
\begin{equation}
v(x')-v(\bar{x})\geq\frac{\lambda}{2}|x'-\bar{x}|^{2}\geq C\frac{\lambda}{2}\min\{|\delta(t'_{n+1})-\delta(s_{n})-\epsilon'|^{2}\,,\,|\delta(t'_{n})-\delta(s_{n})-\epsilon'|^{2}\},
\end{equation}
where $C$ depends on the maximum and minimum of the $C^{1}$ norm
of $F$ on a set of the form 
\[
\{t_{0}\leq t\leq t_{0}+\tilde{\epsilon}\,,\,0\leq s\leq M\},
\]
which can be shown similar to before that $F$ is $C^{1}$ over it
with a $C^{1}$ inverse. Letting $\epsilon'\rightarrow0$ we get 
\[
v(x')\geq C\frac{\lambda}{2}\min\{|\delta(t'_{n+1})-\delta(s_{n})|^{2}\,,\,|\delta(t'_{n})-\delta(s_{n})|^{2}\}.
\]
However $B_{n}$ intersects the free boundary at its end points. Let
the closest endpoint to $x'$ be $x''$. Then as $u$ is $C^{1,1}$
away from $\partial U$, we have 
\begin{equation}
v(x')\leq C'|x''-x'|+v(x'')\leq C'C[(t_{n}'-t_{n+1}')+(M_{n}-\delta(t'_{n}))+(M_{n}-\delta(t'_{n+1}))].
\end{equation}
Where $C'$ bounds $C^{1,1}$ norm of $v=d_{p}-u$ around $x_{0}$,
and $C$ is as before. Therefore we have 
\begin{eqnarray}
|\delta(t_{n})-\delta(s_{n})|^{2} &  & \leq\min\{|\delta(t'_{n+1})-\delta(s_{n})|^{2}\,,\,|\delta(t'_{n})-\delta(s_{n})|^{2}\}\\
 &  & \le\tilde{C}[(t_{n}'-t_{n+1}')+(M_{n}-\delta(t'_{n}))+(M_{n}-\delta(t'_{n+1}))]\underset{n\rightarrow\infty}{\rightarrow}0.\nonumber 
\end{eqnarray}
But this contradicts the assumption. Hence $\delta$ is continuous
at $t_{0}$.\end{proof}
\begin{rem}
If we allow $\partial U$ to be piecewise smooth and to satisfy Assumptions
\ref{assu: 1} or \ref{assu: 2} at degenerate points, then we can
still say that the free boundary is locally a continuous arc. To see
this take $x_{0}$ to be a point on the free boundary (inside $U$),
and let $y_{0}$ be the $p\hspace{1bp}$-closest point to it on the
boundary (we know that $x_{0}\notin R_{p,0}$). If $y_{0}$ is a regular
point or a degenerate point of Assumption \ref{assu: 1}, then around
it we can still represent the free boundary as the graph of a function
$\delta$ and the previous arguments apply. If $y_{0}$ is a reentrant
corner and $x_{0}$ is between the inward $p\hspace{1bp}$-normals
at $y_{0}$, then we can define a similar function $\delta$ (of the
angle between $\overline{x_{0}y_{0}}$ and one of the $p\hspace{1bp}$-normals)
whose graph is the free boundary and the analysis is similar to before.
(Note that in this case $\Delta d_{p}\geq0$ and $d_{p}$ is just
the $p\hspace{1bp}$-distance from the corner, so the analysis is
actually simpler.)

And finally if $\overline{x_{0}y_{0}}$ is in the direction of one
of the $p\hspace{1bp}$-normals, then we can prove the continuity
from each side separately. Although the above proof needs modifications
as we did not show that $1-\kappa_{p}d_{p}$ is necessarily nonzero
at these points. Therefore the ``open'' set in the proof is not
necessarily the image of an injective map.%
\footnote{The one-sided continuity from the region between the two $p$-normals
follows as before, if we are not in the degenerate case of Assumption
\ref{assu: 2}. Continuity from the other side follows similarly in
some cases, for example when $\kappa_{p}=0$ at the corner. %
} Nevertheless the analysis below does not apply at these points and
we can not get regularity at them. So we do not go into the details
of this case.
\end{rem}
Now we can apply the following theorem proved in \citet{MR679313}.
Note that the free boundary is locally a Jordan arc, since it is parametrized
by the continuous injective map 
\[
t\mapsto y(t)+\delta(t)\mu(t).
\]
Injectivity follows from the arguments in the beginning of this section
resulted in the definition of $\delta$, and the fact that the free
boundary is part of the plastic region. (Thus points of the free boundary
have a unique $p\hspace{1bp}$-closest point on $\partial U$ to them.)
The case of reentrant corners is similar.
\begin{thm}
\label{thm: free bdry smooth}Let $V$ be a simply connected domain
in $\mathbb{R}^{2}$ whose boundary $\partial V$ is a Jordan curve,
and let $\Gamma\subset\partial V\cap B_{R}(x_{0})$ be a Jordan arc
(for some $R>0$). Also assume that $u\in C^{1,1}(V\cup\Gamma)$,
$f\in C^{m,\alpha}(B_{R}(x_{0}))$, $\phi\in C^{m+2,\alpha}(B_{R}(x_{0}))$
where $m\geq0\,,\;0<\alpha<1$. And assume that $u$ satisfies 
\begin{equation}
\begin{array}{ccc}
\Delta u=f &  & \textrm{in }V\cap B_{R}(x_{0})\\
\\
u=\phi\,,\; Du=D\phi &  & \textrm{on }\Gamma\\
\\
f-\Delta\phi\neq0 &  & \textrm{in }B_{R}(x_{0}).
\end{array}
\end{equation}
Then $\Gamma$ has a $C^{m+1,\alpha}$ nondegenerate parametrization.
\end{thm}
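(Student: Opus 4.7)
The plan is to reduce to a standard obstacle-type configuration and then proceed in two stages: first establish that $\Gamma$ is $C^{1}$, and then bootstrap to $C^{m+1,\alpha}$ via a partial hodograph--Legendre transformation.

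First, set $w := u - \phi$. Then $w \in C^{1,1}(V \cup \Gamma)$ with $w = 0$ and $Dw = 0$ on $\Gamma$, and
\[
\Delta w = f - \Delta\phi =: g\quad\textrm{in } V \cap B_R(x_0),
\]
where $g \in C^{m,\alpha}$ is nowhere zero on $B_R(x_0)$. After shrinking $R$ we may assume $g \ge \tfrac{1}{2}|g(x_0)|$, and (replacing $w$ by $-w$ if needed) that $g > 0$ throughout.

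Next I would establish $C^{1}$ regularity of $\Gamma$. For each $y \in \Gamma$ close to $x_0$ the rescalings $w_r(\xi) := r^{-2}w(y + r\xi)$ are uniformly bounded in $C^{1,1}$, and the Cauchy data $w = |Dw| = 0$ on $\Gamma$ combined with the nondegeneracy $\Delta w_r \to g(y) > 0$ force every blow-up limit $w_\infty$ to be a global $C^{1,1}$ solution satisfying $w_\infty \geq 0$, $\Delta w_\infty = g(y)$, and $w_\infty = |Dw_\infty| = 0$ on $\partial\{w_\infty > 0\}$. Since $V$ has a Jordan boundary, the set $\{w_\infty > 0\}$ must be a half-plane, which classifies $w_\infty(\xi) = \tfrac{g(y)}{2}((\xi \cdot \nu(y))_+)^{2}$ for a unique inward unit normal $\nu(y)$. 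Uniqueness of the blow-up together with a standard continuity argument then shows that $y \mapsto \nu(y)$ is continuous along $\Gamma$, whence $\Gamma$ is a $C^{1}$ arc. I expect this to be the principal obstacle: extracting the initial $C^{1}$ structure from the raw $C^{1,1}$ bound on $u$, everything after being a routine elliptic bootstrap.

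Once $\Gamma$ is $C^{1}$, translate $x_0$ to the origin and rotate so the tangent to $\Gamma$ at $0$ is $e_1$; then $V \cap B_\rho = \{x_2 > h(x_1)\}$ for some $h \in C^{1}$ with $h(0) = h'(0) = 0$. Differentiating $w = |Dw| = 0$ tangentially along $\Gamma$ at the origin gives $D_{11} w(0) = D_{12} w(0) = 0$, whence $D_{22} w(0) = g(0) > 0$, so $D_2 w > 0$ strictly in $V$ near $0$. Define the partial hodograph
\[
T(x_1, x_2) := (x_1,\, D_2 w(x_1, x_2)),
\]
a bi-Lipschitz homeomorphism from a neighborhood of $0$ in $\overline{V}$ onto a neighborhood of $0$ in the closed upper half-plane, carrying $\Gamma$ onto a segment of $\{y_2 = 0\}$. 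Setting $W(y_1, y_2) := x_2 y_2 - w(x_1, x_2)$ (partial Legendre transform with respect to $x_2$), a direct computation converts $\Delta w = g$ into a uniformly elliptic quasilinear equation for $W$ on the upper half-plane with $C^{m,\alpha}$ coefficients, and the Cauchy data $w = |Dw| = 0$ on $\Gamma$ translate to Dirichlet data $W = 0$ on $\{y_2 = 0\}$. Boundary Schauder theory then gives $W \in C^{m+2,\alpha}$ up to $\{y_2 = 0\}$, and reading off $x_2 = \partial_{y_2} W(y_1, 0)$ produces a $C^{m+1,\alpha}$ parametrization of $\Gamma$; nondegeneracy of the parametrization is inherited from the strict positivity $D_{22} w(0) > 0$ via $T^{-1}$.
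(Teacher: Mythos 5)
First, a point of orientation: the paper does not prove this theorem at all — it is quoted verbatim from \citet{MR679313} (where it goes back to Caffarelli--Rivi\`ere and Kinderlehrer--Nirenberg), and the paper's ``proof'' is simply the citation. So you are attempting to reprove a known result, and your second stage (the partial hodograph--Legendre transform once $\Gamma$ is known to be $C^{1}$ and $D_{22}w>0$ near it) is indeed the standard Kinderlehrer--Nirenberg bootstrap and is fine in spirit. The problem is the first stage, which you yourself identify as the principal obstacle: as written it has genuine gaps, and they are exactly the content of the theorem.

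Concretely: (i) you assert that every blow-up limit satisfies $w_{\infty}\ge 0$, but the theorem makes no sign assumption — $w=u-\phi$ is not assumed nonnegative ($\Gamma$ is a prescribed piece of $\partial V$, not defined as $\partial\{w>0\}$), so the obstacle-problem structure you invoke is not available; (ii) the obstacle-problem blow-up and classification machinery (Caffarelli's convexity/monotonicity arguments, Weiss-type monotonicity for uniqueness of blow-ups) requires a $C^{1,1}$ solution of $\Delta w=g\chi_{\Omega}$ on a full two-sided neighborhood, whereas here $u$ lives only on $V\cup\Gamma$ and the zero extension across $\Gamma$ need not be $C^{1,1}$ (near $x_{0}$ the complement of $V$ may be a thin cusp, or another piece of $\partial V\setminus\Gamma$ may intrude into the ball), so there is no equation across $\Gamma$ to work with; and (iii) the step ``since $V$ has a Jordan boundary, $\{w_{\infty}>0\}$ must be a half-plane'' is a non sequitur — a Jordan arc can spiral, cusp, or oscillate wildly, and ruling out precisely such behavior (i.e.\ producing a tangent line and its continuity) is what the theorem is for; it cannot be assumed of the blow-up limit. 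Without (i)--(iii) you get neither the classification $w_{\infty}=\tfrac{g(y)}{2}\left((\xi\cdot\nu(y))_{+}\right)^{2}$ nor uniqueness of blow-ups, hence no $C^{1}$ arc, and the hodograph stage never gets off the ground. The classical proof cited by the paper avoids this by genuinely two-dimensional arguments (analysis of level curves of directional derivatives of $w$, respectively conformal mapping of $V$), which is a different mechanism from the blow-up approach you propose; if you want to keep the blow-up route you would first have to justify an extension of $w$ across $\Gamma$ and a sign, which in this generality is not routine.
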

Now we can take $x_{0}$ on the free boundary (inside $U$) to be
a point where $d_{p}$ is at least $C^{3,\alpha}$ for $0<\alpha<1$
around it. Take $V$ to be the subset of $E$, the elastic set, consisted
of points above the graph of $\delta$ around $x_{0}$. Note that
$V$ is simply connected as we can project any loop in it onto the
graph of $\delta$ and then shrink it to a point. We know that $u$
is in $C^{1,1}(\bar{V})$. If we show that $-\eta-\Delta d_{p}\neq0$
at $x_{0}$, then by continuity we can choose a small ball $B_{R}(x_{0})$
such that $-\eta-\Delta d_{p}\neq0$ everywhere on it, and the theorem
applies. We use the following lemma from \citet{MR679313} to show
this. Note that our problem is equivalent to 
\begin{equation}
\begin{array}{c}
-\Delta(-u)+\eta\geq0\\
\\
-u\geq-d_{p}\\
\\
(-\Delta(-u)+\eta)(-u+d_{p})=0.\\
\\
\end{array}
\end{equation}

\begin{lem}
\label{lemma f-Dphi>0}Suppose 
\[
\begin{array}{c}
-\Delta u+f\geq0\\
\\
u\geq\phi\\
\\
(-\Delta u+f)(u-\phi)=0
\end{array}
\]
in $U$, and $u\in C^{1,1}(U)$, $\phi\in C^{3}$. Then on the free
boundary, if $f-\Delta\phi$ and $D(f-\Delta\phi)$ do not vanish
simultaneously, we have $f-\Delta\phi>0$. (The free boundary is the
boundary of the set $\{u>\phi\}$ inside $U$.)
\end{lem}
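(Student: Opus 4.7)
Set $v := u - \phi \in C^{1,1}(U)$ and $\psi := f - \Delta\phi$. The hypotheses then translate as: $v \geq 0$ on $U$; on the open set $\{v > 0\}$ one has $\Delta v = \psi$ (from $-\Delta u + f = 0$ there); on the interior of the coincidence set $\{v = 0\}$ one has $\Delta v = 0$, so the super-solution inequality $-\Delta u + f \geq 0$ forces $\psi \geq 0$; and globally $\Delta v \leq \psi$ almost everywhere. At any free-boundary point $x_0$, the facts that $v \geq 0 = v(x_0)$ globally and $v \in C^1$ imply $Dv(x_0) = 0$. The plan is to argue by contradiction, assuming $\psi(x_0) \leq 0$.

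If $\psi(x_0) < 0$, continuity gives a ball $B = B_r(x_0)$ on which $\psi < 0$, whence $\Delta v \leq \psi < 0$ a.e.\ on $B$. Since $v \in C^{1,1}$ implies $Dv \in W^{1,\infty}$, the distributional Laplacian of $v$ agrees with its pointwise a.e.\ Laplacian, so $v$ is superharmonic on $B$ in the classical mean-value sense. The strong minimum principle, applied to $v \geq 0$ with interior minimum $v(x_0) = 0$, forces $v \equiv 0$ on $B$, contradicting $x_0 \in \partial\{v > 0\}$.

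If instead $\psi(x_0) = 0$ and $D\psi(x_0) \neq 0$, the implicit function theorem makes $\{\psi = 0\}$ a smooth arc through $x_0$, so the open set $\Omega := B_r(x_0) \cap \{\psi < 0\}$ has smooth boundary with the interior-ball condition at $x_0$. By the same reasoning as above, $v$ is superharmonic on $\Omega$ with $v \geq 0$ and $v(x_0) = 0$. Either $v$ vanishes at some interior point of $\Omega$, in which case the strong minimum principle gives $v \equiv 0$ on a component of $\Omega$, placing an open subset of $\Omega$ inside $\mathrm{int}\{v = 0\}$ where $\psi \geq 0$ must hold --- contradicting $\psi < 0$ on $\Omega$ --- or $v > 0$ throughout $\Omega$, so that $\Omega \subset \{u > \phi\}$. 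In the latter case, elliptic regularity for $\Delta u = f$ upgrades $v$ to $C^2(\Omega)$, and the Hopf boundary-point lemma then yields $\partial_\nu v(x_0) \neq 0$, contradicting $Dv(x_0) = 0$.

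The most delicate point is the final Hopf step, requiring both $v \in C^2(\Omega)$ and an interior ball at $x_0$: the regularity follows from elliptic regularity in $\{u > \phi\}$ (using Hölder regularity of $f$ available from the ambient hypotheses of Theorem \ref{thm: free bdry smooth}), and the interior ball follows from the $C^1$-smoothness of the level curve $\{\psi = 0\}$ near $x_0$ given by the implicit function theorem. Everything else reduces to the mean-value/minimum principle for the $C^{1,1}$ superharmonic function $v$, where the passage between the a.e.\ pointwise and distributional Laplacians is the only analytic subtlety.
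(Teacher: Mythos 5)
The paper never proves this lemma --- it is imported verbatim from \citet{MR679313} --- so there is no internal proof to compare with; judged on its own, your argument follows the standard route for this statement: set $v=u-\phi$, $\psi=f-\Delta\phi$, note $Dv(x_0)=0$ at a free boundary point, observe $\psi\ge0$ on $\mathrm{int}\{v=0\}$ and $\Delta v\le\psi$ a.e.\ (legitimate, since for $v\in C^{1,1}=W^{2,\infty}_{\mathrm{loc}}$ the a.e.\ and distributional Laplacians coincide), dispose of $\psi(x_0)<0$ by the strong minimum principle for the superharmonic $v$, and in the case $\psi(x_0)=0$, $D\psi(x_0)\ne0$ show $v>0$ on $\Omega=B_r(x_0)\cap\{\psi<0\}$ and contradict $Dv(x_0)=0$ with a Hopf boundary point lemma. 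All of those reductions, and the dichotomy in the second case, are correct.

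The genuine gap is the justification of the Hopf step. You claim the interior-ball condition at $x_0$ follows from the $C^1$ regularity of the level curve $\{\psi=0\}$ supplied by the implicit function theorem. It does not: under the lemma's stated hypotheses ($\phi\in C^{3}$, $f$ differentiable) $\psi$ is only $C^{1}$, and a $C^{1}$ (even $C^{1,\alpha}$, $\alpha<1$) graph need not admit a tangent ball from the relevant side --- if near $x_0$ the curve is the graph of $g(x_1)=-|x_1|^{3/2}$, any ball in $\{x_2<g(x_1)\}$ touching the origin would have to be tangent to the $x_1$-axis there, forcing $|x_1|^{3/2}\lesssim x_1^{2}$ near $0$, which fails. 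Worse, the Hopf--Oleinik lemma itself can fail in merely $C^{1}$ domains (the sharp requirement is Dini continuity of the normal), so this is a gap in the step, not just in its wording. The repair is to have $\psi\in C^{1,\mathrm{Dini}}$ (e.g.\ $C^{1,\alpha}$), for which Hopf holds on $\{\psi<0\}$ without any interior ball; this is exactly what is available in the paper's application, where $f\equiv\eta$ is constant and $\phi=-d_p$ is $C^{m+1,\alpha}$, $m\ge2$, near the free boundary, so the lemma as you prove it suffices there, but as a standalone statement with $\phi\in C^{3}$ your argument does not close. Two smaller points: the appeal to ``Hölder regularity of $f$ from the ambient hypotheses of Theorem \ref{thm: free bdry smooth}'' is out of place in a self-contained lemma and also unnecessary --- $\psi\in C^{1}$ is already locally Hölder, and in any case the barrier proof of Hopf's lemma needs only that $v$ is continuous and distributionally superharmonic on $\Omega$, not $v\in C^{2}$; and in case 2 you could skip the dichotomy by noting that $D^2v=0$ a.e.\ on $\{v=0\}$ forces $|\{v=0\}\cap\Omega|=0$, so $\Delta v=\psi$ holds a.e., hence distributionally, on all of $\Omega$.
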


Hence we only need to show that $\psi:=\eta+\Delta d_{p}>0$ along
the free boundary, by showing that $\psi,D\psi$ can not vanish simultaneously
there. In order to prove the latter fact, suppose at a nondegenerate
point $x_{0}$ we have 
\[
\psi=\eta+\Delta d_{p}=\eta-\frac{(p-1)\kappa_{p}\tau_{p}}{1-\kappa_{p}d_{p}}=0.
\]
As $\eta,\tau_{p}$ are positive, this implies that $\kappa_{p}(y(x_{0}))\neq0$.
(Note that as free boundary is plastic we have $1-\kappa_{p}d_{p}\neq0$
there.) Now we have 
\begin{equation}
D_{\mu}\psi=D_{\mu}\Delta d_{p}=\frac{-(p-1)\kappa_{p}^{2}\tau_{p}}{(1-\kappa_{p}d_{p})^{2}}D_{\mu}d_{p}=\frac{-(p-1)\kappa_{p}^{2}\tau_{p}}{(1-\kappa_{p}d_{p})^{2}}\neq0.
\end{equation}
Since $\kappa_{p},\tau_{p}$ do not change along $\mu=\mu(y(x_{0}))$
and $D_{\mu}d_{p}=1$. In case of degenerate points of Assumption
\ref{assu: 1}, we showed that there $\Delta d_{p}=0$ so $\psi>0$.
Finally if $y(x_{0})$ is a reentrant corner and $x_{0}$ is between
the inward $p\hspace{1bp}$-normals, formula (\ref{eq: laplace d_p nonre})
shows that $\Delta d_{p}\geq0$, hence $\psi>0$.

Now if we show that the free boundary has no cusps, we have proved
that it is a smooth curve. The following theorem proved in \citet{MR679313}
for $p=2$, but the proof works in this more general setting with
no change.
\begin{thm}
\label{thm: no cusps}The plastic set has positive density at each
point of the free boundary where $d_{p}$ is $C^{3,\alpha}$ around
it, for some $0<\alpha<1$.
\end{thm}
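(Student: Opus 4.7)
Let $w := d_p - u$. Under the $C^{3,\alpha}$ hypothesis on $d_p$ near $x_0$ (and the $C^{1,1}_{\mathrm{loc}}$ regularity of $u$), we have $w \in C^{1,1}$ near $x_0$, $w \geq 0$, $w(x_0) = Dw(x_0) = 0$, and $\Delta w = \psi\,\chi_E$ with $\psi := \eta + \Delta d_p$. The analysis directly preceding the theorem gives $\psi \geq c_1 > 0$ on some small ball $B_R(x_0)$.

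My plan is to reduce the density bound to two standard ingredients: a \emph{quadratic non-degeneracy} $\sup_{B_r(x_0)} w \geq c_2 r^2$, and a \emph{blow-up classification} of $w_\infty := \lim_n r_n^{-2} w(x_0 + r_n \cdot)$. The non-degeneracy follows from the Caffarelli barrier: on any component of $E$, the auxiliary $w - \tfrac{c_1}{4}|\cdot - x_*|^2$ is subharmonic and vanishes on the free boundary part of its boundary, so choosing $x_* \in E$ close to $x_0$ gives the quadratic growth. For the blow-up, the uniform $C^{1,1}$-bound on $w_n(x) := r_n^{-2}w(x_0 + r_n x)$ yields a $C^{1,\alpha}(B_1)$-limit $w_\infty$; under the hypothesis that $|P \cap B_{r_n}(x_0)|/r_n^2 \to 0$, the coincidence sets shrink in measure and $\psi$ is continuous, so $\Delta w_\infty = \psi(x_0) =: c^* > 0$ throughout $B_1$ distributionally. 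Cauchy estimates applied to the harmonic $h := w_\infty - \tfrac{c^*}{4}|\cdot|^2$ (vanishing to second order at $0$, of quadratic growth) then pin $w_\infty$ to the explicit form
\[
w_\infty(x) = r^2\Bigl(\tfrac{c^*}{4} + A\cos(2\theta-\theta_0)\Bigr), \qquad 0 \leq A \leq \tfrac{c^*}{4}.
\]

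The crucial step is to rule out these profiles by invoking the segment structure of $P$. Lemma~\ref{segment is plastic} shows that the whole segment $\overline{x_0 y_0}$ lies in $P$, and by the continuity of $\delta$ from Theorem~\ref{thm: free bdry jordan} together with $\delta(t_0) = d_p(x_0) > 0$, the same is true of the segments from neighboring boundary points $y(t)$. Rescaling, the segment from $y_0$ forces $w_\infty \equiv 0$ on the whole ray from the origin in direction $-\mu(y_0)$; this already rules out $A < c^*/4$ (where $\{w_\infty = 0\} = \{0\}$), and for $A = c^*/4$ (the \emph{line} case) it forces the zero-line of $w_\infty$ to point in the $\mu(y_0)$-direction. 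The contradiction with the line case is then obtained from segments through nearby $y(t)$: the non-vanishing Jacobian $1-\kappa_p \delta(t_0) > 0$ (Lemma~\ref{lemma 1-kd>0}) makes the top ends of the rescaled nearby segments acquire a non-zero $y'(t_0)$-component, so they cannot all concentrate on a single line pointing along $\mu(y_0)$.

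The main obstacle is this last compactness step: the top end of the segment at parameter $t = t_0 + r_n\tilde\tau$ sits at height $(\delta(t_0 + r_n\tilde\tau) - \delta(t_0))/r_n$ in the rescaled $\mu$-direction, which a priori is only known to remain bounded if $\delta$ is Lipschitz at $t_0$. Upgrading the bare continuity of $\delta$ to this quantitative bound---equivalently, constructing a barrier from below that prevents the plastic region from pinching against the free boundary---requires combining the non-degeneracy of $w$ with the PDE $-\Delta u = \eta$ on the elastic side. This is the delicate point of the argument in \citet{MR679313} for $p=2$, and, as remarked in the paper, it carries over unchanged to the present setting because all the ingredients---$w \in C^{1,1}$, non-degeneracy of $w$, positivity of $\psi$ on the free boundary, and the local diffeomorphism property of $F(t,s) = y(t) + s\mu(y(t))$---have already been established in the $p$-case by the preceding results of this section.
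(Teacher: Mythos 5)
There is a genuine gap, and it sits exactly where you flag it. Your blow-up scaffolding (quadratic non-degeneracy, $C^{1,1}$ compactness of $w_n=r_n^{-2}w(x_0+r_n\cdot)$, classification of the limit as $r^2\bigl(\tfrac{c^*}{4}+A\cos(2\theta-\theta_0)\bigr)$) is sound in outline, modulo the minor point that the Liouville classification needs the blow-up taken on expanding domains exhausting $\mathbb{R}^2$, not just on $B_1$. But it only reduces the theorem to ruling out the rank-one profile $A=c^*/4$, and since $F(t,s)=y(t)+s\mu(t)$ is a $C^1$ diffeomorphism with non-vanishing Jacobian near $x_0$ (Lemma~\ref{lemma 1-kd>0}), positive density of $P$ at $x_0$ is \emph{equivalent} to the subgraph of $\delta$ having positive density at $(t_0,\delta(t_0))$, i.e.\ to $\delta$ having no inward cusp there. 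So the blow-up machinery merely restates the theorem; the whole content is the step you then leave open. Moreover, the geometric mechanism you propose for that step does not work as stated: under the zero-density hypothesis the rescaled heights $(\delta(t_0+r_n\tau)-\delta(t_0))/r_n$ tend to $-\infty$ for (in measure) most $\tau$, so the tops of the nearby plastic segments simply exit the blow-up ball through the bottom; nothing forces them to lie near the $\mu(y_0)$-line, and the central segment alone vanishing on the downward ray is perfectly consistent with the rank-one profile. The non-vanishing Jacobian gives no contradiction here --- it is what makes the equivalence above true, not what excludes the cusp.

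Your final move --- asserting that the delicate step ``carries over unchanged'' from \citet{MR679313} because the ingredients ($w\in C^{1,1}$, non-degeneracy, $\psi>0$, local diffeomorphism of $F$) are available --- is precisely what the paper itself does for the entire theorem (it gives no proof, only the citation together with the claim that the $p=2$ argument works verbatim). As an independent proof your proposal therefore adds the reduction but not the theorem: you have neither reproduced the cusp-exclusion argument of the reference nor verified that it uses only the listed ingredients in the general $p$ setting (where the plastic segments follow the $p$-normal field $\mu$ rather than the Euclidean normal). To close the gap you would need the quantitative non-pinching bound on $\delta$ itself, e.g.\ a barrier/comparison argument on the elastic side combining $-\Delta u=\eta$ with the non-degeneracy of $w=d_p-u$, which is the actual substance of the cited proof.
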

Therefore putting all these together we get
\begin{thm}
The free boundary is locally $C^{m,\alpha}$ ($m\geq2\,,\;0<\alpha<1$)
or analytic, if the part of $\partial U$ that parametrizes it is
$C^{m+1,\alpha}$ or analytic. Any open part of the free boundary
that has a reentrant corner as its $p$-closest point on the boundary,
is analytic.\end{thm}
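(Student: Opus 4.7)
The plan is to assemble the results established in this section. Fix a free-boundary point $x_0 \in \Gamma$ lying in $U$. Since the free boundary is contained in the plastic region and the plastic region is disjoint from the $p$-ridge (Theorems \ref{thm: ridge is elastic} and \ref{thm: ridge, corners}), the point $x_0$ has a unique $p$-closest point $y_0 \in \partial U$, and $1 - \kappa_p(y_0)\,d_p(x_0) > 0$ by Lemma \ref{lemma 1-kd>0}. By Theorem \ref{thm: d_p smooth - corners}, if the arc of $\partial U$ containing $y_0$ is $C^{m+1,\alpha}$ (resp.\ analytic), then $d_p$ is $C^{m+1,\alpha}$ (resp.\ analytic) on a neighborhood of $x_0$; and if $y_0$ is a reentrant corner with $\overline{x_0 y_0}$ strictly between the two inward $p$-normals, then $d_p$ coincides with $\gamma_p(\cdot - y_0)$ near $x_0$, which is analytic regardless of the smoothness of $\partial U$. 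This last observation is what will deliver the second sentence of the theorem.

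Next I would verify the hypotheses of the Kinderlehrer--Nirenberg-type statement in Theorem \ref{thm: free bdry smooth}. Continuity of $\delta$ from Theorem \ref{thm: free bdry jordan} (together with its corner variants in the remark), combined with the injectivity of $t \mapsto y(t) + \delta(t)\mu(t)$ (a consequence of the unique $p$-closest point property at plastic points), shows that $\Gamma$ is locally a Jordan arc. Choose $R$ small enough that $d_p$ has the asserted regularity on $B_R(x_0)$ and let $V$ be the component of $E \cap B_R(x_0)$ lying on the elastic side of $\Gamma$; it is simply connected since any loop in it retracts onto the graph of $\delta$. On $V$ one has $-\Delta u = \eta$, and on $\Gamma \cap B_R(x_0)$ one has $u = d_p$ and $Du = Dd_p$ because $u - d_p \le 0$ attains its maximum along $\Gamma$ and $u \in C^1(U)$.

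Applying Theorem \ref{thm: free bdry smooth} with $\phi = d_p$ and $f = -\eta$ requires $\psi := \eta + \Delta d_p \ne 0$ throughout $B_R(x_0)$. By Lemma \ref{lemma f-Dphi>0} it suffices to check that $\psi$ and $D\psi$ cannot vanish simultaneously at $x_0$. If $\psi(x_0) = 0$ at a nondegenerate point, the formula of Theorem \ref{laplace d_p} forces $\kappa_p(y_0) \ne 0$, and then $D_\mu \psi = -(p-1)\kappa_p^2 \tau_p / (1 - \kappa_p d_p)^2 \ne 0$ because $\kappa_p, \tau_p$ are constant along the $p$-normal and $D_\mu d_p = 1$. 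At degenerate points of Assumption \ref{assu: 1} and at reentrant-corner points between the $p$-normals, the section's explicit computation gives $\Delta d_p \ge 0$, so $\psi \ge \eta > 0$ directly. After possibly shrinking $R$, the hypotheses of Theorem \ref{thm: free bdry smooth} are satisfied, and one reads off a $C^{m,\alpha}$ (resp.\ analytic) nondegenerate parametrization of $\Gamma$. Finally, Theorem \ref{thm: no cusps} shows that the plastic set has positive density at $x_0$, which rules out cusps and confirms that this parametrization faithfully describes the local geometry of $\Gamma$. The reentrant-corner statement follows by running the same argument with $d_p$ analytic around $x_0$ and invoking the analytic case of Theorem \ref{thm: free bdry smooth}.

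The main obstacle is really bookkeeping across the geometric configurations: one must verify, case by case (smooth boundary point, degenerate point of Assumption \ref{assu: 1}, and reentrant corner with $x_0$ between the inward $p$-normals), that $d_p$ has the claimed regularity and that $\eta + \Delta d_p > 0$. Free-boundary points sitting on an inward $p$-normal from a reentrant corner are explicitly excluded from the statement, which is consistent with the remark after Theorem \ref{thm: free bdry jordan} noting that the regularity machinery above breaks down at those points. Outside this bookkeeping, every analytical ingredient has already been assembled in the section.
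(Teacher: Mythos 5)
Your proposal is correct and follows essentially the same route as the paper: combine Theorems \ref{thm: d_p smooth - corners} and \ref{thm: ridge, corners} for the regularity of $d_{p}$ near the free boundary, Theorem \ref{thm: free bdry jordan} for the Jordan-arc property, the $\psi=\eta+\Delta d_{p}>0$ verification via Lemma \ref{lemma f-Dphi>0} and the $D_{\mu}\psi$ computation, then Theorem \ref{thm: free bdry smooth} with $\phi=d_{p}$ and Theorem \ref{thm: no cusps}, treating reentrant corners through the analyticity of $d_{p}$ there. The only trivial slip is that at degenerate points of Assumption \ref{assu: 1} one has $\Delta d_{p}=0$ (not merely $\ge0$), which still gives $\psi=\eta>0$.
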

\begin{proof}
Theorems \ref{thm: d_p smooth - corners} and \ref{thm: ridge, corners}
imply that $d_{p}$ is $C^{m+1,\alpha}$ around the free boundary.
Also by Theorem \ref{thm: free bdry jordan}, the free boundary is
locally a Jordan arc. Therefore we can apply Theorem \ref{thm: free bdry smooth}
with $\phi=d_{p}$ and conclude that locally, the free boundary has
a nondegenerate $C^{m,\alpha}$ parametrization. This together with
the fact that the free boundary has no cusps (Theorem \ref{thm: no cusps}),
give the result. The case of reentrant corners is similar, noting
that $d_{p}$ is analytic around them.
\end{proof}
Note that by this theorem the free boundary is smooth, except at the
finite number of points on it which have a reentrant corner as the
$p\hspace{1bp}$-closest point to them, and lie on the direction of
an inward $p\hspace{1bp}$-normal at that corner.

\section{The $p\hspace{2bp}$-bisector}

Now we focus on understanding the shape of the free boundary. Consider
the line $ax+by+c=0$ and the point $(x_{0},y_{0})$ in the plane.
We want to find the $p\hspace{1bp}$-distance of the point and the
line. The direction of the $p\hspace{1bp}$-normal to the line is
\[
(a|a|^{\frac{2-p}{p-1}},b|b|^{\frac{2-p}{p-1}}).
\]
The $p\hspace{1bp}$-closest point on the line to $(x_{0},y_{0})$,
is the intersection of the line and the line that passes through $(x_{0},y_{0})$
in the direction of the $p\hspace{1bp}$-normal. Let that point be
\[
(x_{0},y_{0})+t(a|a|^{\frac{2-p}{p-1}},b|b|^{\frac{2-p}{p-1}}).
\]
Then 
\[
a(x_{0}+ta|a|^{\frac{2-p}{p-1}})+b(y_{0}+tb|b|^{\frac{2-p}{p-1}})+c=0.
\]
Hence 
\begin{equation}
t=-\frac{ax_{0}+by_{0}+c}{|a|^{\frac{p}{p-1}}+|b|^{\frac{p}{p-1}}}.
\end{equation}
Therefore the $p\hspace{1bp}$-distance is 
\begin{eqnarray}
 & d_{p} & =|t|(|a|^{\frac{p}{p-1}}+|b|^{\frac{p}{p-1}})^{\frac{1}{p}}\nonumber \\
 &  & =\frac{|ax_{0}+by_{0}+c|}{(|a|^{\frac{p}{p-1}}+|b|^{\frac{p}{p-1}})^{\frac{p-1}{p}}}\nonumber \\
 &  & =\frac{|ax_{0}+by_{0}+c|}{(|a|^{q}+|b|^{q})^{\frac{1}{q}}},
\end{eqnarray}
where $q=\frac{p}{p-1}$ is the dual exponent to $p$. This also implies
that 
\begin{equation}
d_{p}=cd_{2}.\label{eq: d_p =000026 d_2}
\end{equation}
Where $c$ is some positive constant depending only on $p$ and the
line, but not on the point $(x_{0},y_{0})$.
\begin{defn}
The \textbf{$p\hspace{1bp}$-bisector} of an angle is the set of points
inside the angle that have equal $p\hspace{1bp}$-distance from each
side of the angle.
\end{defn}
It is easy to see from the above arguments that the $p\hspace{1bp}$-bisector
of an angle is a ray inside the angle emitting from its vertex.

\section{Nonreentrant Corners}
\begin{thm}
Suppose $w\in W^{1,\infty}(U)$, and $w\ge0$ on $\partial U$.%
\footnote{For this theorem, we assume that $\partial U$ is Lipschitz.%
} Also suppose that $-\Delta w\ge0$ in the weak sense, i.e. 
\[
\int_{U}Dw\cdot D\phi\, dx\ge0\qquad\qquad\textrm{ for all }\phi\in H_{0}^{1}(U)\textrm{ with }\phi\ge0\textrm{ a.e.}.
\]
Then $w\ge0$ on $U$.\end{thm}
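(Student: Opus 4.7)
The plan is to use the standard test-function argument for the weak maximum principle, with $\phi = w^-$ (the negative part of $w$) as the test function.

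First, I would verify that $\phi := w^- = \max(-w,0)$ is an admissible test function. Since $w \in W^{1,\infty}(U)$ and $U$ is bounded, $w^- \in W^{1,\infty}(U) \subset H^{1}(U)$. The hypothesis $w \ge 0$ on $\partial U$ (interpreted in the trace sense, which is meaningful because $\partial U$ is Lipschitz) gives that the trace of $w^-$ vanishes on $\partial U$, so $\phi \in H_0^1(U)$. Obviously $\phi \ge 0$ a.e., so it is admissible in the given weak formulation.

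Next, I would substitute $\phi = w^-$ into the hypothesis
\[
\int_U Dw \cdot D\phi \, dx \ge 0.
\]
Using the chain rule for Sobolev functions, $Dw^- = -Dw$ a.e.\ on $\{w<0\}$ and $Dw^- = 0$ a.e.\ on $\{w \ge 0\}$. Consequently
\[
Dw \cdot Dw^- = -|Dw^-|^2 \quad \text{a.e.\ in } U,
\]
and the inequality becomes
\[
-\int_U |Dw^-|^2 \, dx \ge 0,
\]
forcing $Dw^- = 0$ a.e.\ in $U$.

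Finally, I would conclude using Poincaré's inequality: since $w^- \in H_0^1(U)$ and $Dw^- = 0$ a.e., we get $w^- \equiv 0$ in $U$, i.e.\ $w \ge 0$ on $U$, as desired. There is no real obstacle here; the only mild subtlety is justifying that $w^- \in H_0^1(U)$ from the trace hypothesis, which is where the Lipschitz assumption on $\partial U$ stated in the footnote is used.
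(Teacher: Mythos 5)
Your proof is correct and follows essentially the same route as the paper: both test the weak inequality with the (nonnegative) negative part of $w$, whose membership in $H_0^1(U)$ uses the Lipschitz boundary, and deduce $\int_{\{w<0\}}|Dw|^2\,dx\le 0$. The only cosmetic difference is the last step — you invoke Poincaré's inequality to kill $w^-$, while the paper argues that $\{w<0\}$ is open (by Lipschitz continuity), $Dw=0$ there, and $w=0$ on its boundary, so the set is empty; both conclusions are immediate.
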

\begin{proof}
Let $\phi=-w^{-}:=-\min\{w,0\}$. We have 
\[
-\int_{\{w<0\}}|Dw|^{2}\, dx=\int_{U}Dw\cdot D(-w^{-})\, dx\ge0.
\]
As $w$ is Lipschitz continuous, $\{w<0\}$ is an open set. But by
the above, $Dw=0$ a.e. on this open set. Also $w=0$ on $\partial\{w<0\}$.
Therefore $\{w<0\}$ must be empty, and $w\ge0$ everywhere on $U$.
\end{proof}
Now suppose
\begin{equation}
G:=\{(r,\theta)\,\mid\,0<r<r_{0}\,,\,-\alpha<\theta<\alpha\},
\end{equation}
and $\alpha<\pi/2$. 
\begin{lem}
\label{lemma u<rd_p}Suppose $u\in W^{1,\infty}(G)$ has Laplacian
bounded from below in the weak sense, i.e. for some $C_{1}>0$ we
have 
\[
\int_{G}Du\cdot D\phi\, dx\leq C_{1}\int_{G}\phi\, dx\qquad\qquad\textrm{ for all }\phi\in H_{0}^{1}(G)\textrm{ with }\phi\ge0\textrm{ a.e.}.
\]
Also suppose that $u$ vanishes on the straight sides of $G$. Then
there are positive constants $C,\nu$, such that for $r$ small enough
we have 
\begin{equation}
u(r,\theta)\le Cr^{\nu}d_{p}(r,\theta),\label{eq: |x|d}
\end{equation}
where $d_{p}$ is the $p$-distance from the straight sides of $G$.\end{lem}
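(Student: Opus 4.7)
The plan is to use a Phragm\'en--Lindel\"of-type barrier adapted to the wedge $G$, after absorbing the inhomogeneity $C_{1}$. Let $\nu_{0}:=\pi/(2\alpha)>1$, and let $\phi(r,\theta):=c\,r^{\nu_{0}}\cos(\nu_{0}\theta)$, which is harmonic in $G$, positive in its interior, and vanishes on the two straight sides. The first step is to construct a bounded function $P\in W^{1,\infty}(G)$ satisfying $\Delta P=-C_{1}$ in $G$ and $P=0$ on the straight sides. When $\alpha\neq\pi/4$ one can take explicitly
\[
P(r,\theta)=\frac{C_{1}}{4\cos(2\alpha)}\,r^{2}\bigl(\cos(2\theta)-\cos(2\alpha)\bigr),
\]
and at the resonant angle $\alpha=\pi/4$ an $r^{2}\log r$ correction must be added. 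A direct check gives $|P(r,\theta)|\le Cr^{\min(2,\nu_{0})}\sin(\alpha-|\theta|)$, possibly up to a logarithmic factor in the resonant case.

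Next, set $v:=u-P$. The hypothesis $-\Delta u\le C_{1}$ becomes $-\Delta v\le 0$ weakly, so $v$ is weakly subharmonic in $G$, vanishes on the straight sides, and lies in $W^{1,\infty}(G)$. Since $v$ is Lipschitz and vanishes at the endpoints $\theta=\pm\alpha$ of the arc $r=r_{0}$, and since the identity $\cos(\nu_{0}\theta)=\sin(\nu_{0}(\alpha-|\theta|))$ combined with $\sin(\nu_{0}x)\ge\sin(x)$ on $[0,\alpha]$ (using $\nu_{0}\ge 1$ and concavity of $\sin$) gives $\cos(\nu_{0}\theta)\gtrsim\sin(\alpha-|\theta|)$, I can choose $c$ large enough that $\phi\ge v$ on the arc. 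The weak comparison principle established at the start of this section, applied to $w:=\phi-v$ (which lies in $W^{1,\infty}(G)$, satisfies $-\Delta w\ge 0$ weakly, and is nonnegative on $\partial G$ including the vertex by continuity), then yields $v\le\phi$ in $G$, i.e.
\[
u(r,\theta)\le c\,r^{\nu_{0}}\cos(\nu_{0}\theta)+P(r,\theta)\qquad\text{in }G.
\]

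To conclude I translate the right-hand side into an estimate against $d_{p}$. The distance formula of Section 5 gives $d_{p}(r,\theta)\asymp r\sin(\alpha-|\theta|)$ near the vertex, and a reverse inequality $\cos(\nu_{0}\theta)\le C\sin(\alpha-|\theta|)$ holds by the same sine comparison. Therefore
\[
r^{\nu_{0}}\cos(\nu_{0}\theta)\le C\, r^{\nu_{0}-1}d_{p}(r,\theta),\qquad |P(r,\theta)|\le C'\, r^{\min(2,\nu_{0})-1}d_{p}(r,\theta),
\]
so for $r$ small one obtains $u(r,\theta)\le C\,r^{\nu}d_{p}(r,\theta)$ with $\nu:=\min(\nu_{0}-1,\,1)=\min\bigl(\pi/(2\alpha)-1,\,1\bigr)>0$, as claimed.

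The main obstacle is the construction of $P$ with the correct vanishing at the straight sides and controlled growth at the vertex; the resonant case $\alpha=\pi/4$ in particular requires an $r^{2}\log r$ correction (or, more generally, Kondratiev's corner expansion) because the purely quadratic ansatz fails. The logarithm only marginally degrades $\nu$, so the overall strategy and conclusion are unaffected.
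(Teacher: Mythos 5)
Your argument is correct, and for part of the range of angles it genuinely diverges from the paper's proof. For $\alpha>\pi/4$ the two proofs are close in spirit: the paper uses the single supersolution $r^{\pi/2\alpha}\cos(\tfrac{\pi}{2\alpha}\theta)+\tfrac{r^{2}}{2}(\sin^{2}\theta-\cos^{2}\theta\tan^{2}\alpha)$, whose quadratic part is (up to normalization) exactly your $P$, while you split off $P$ first and then compare the subharmonic remainder with the harmonic wedge function; same barrier idea, different bookkeeping. The real difference is at $\alpha\le\pi/4$: the paper's quadratic correction has Laplacian $1-\tan^{2}\alpha\ge0$ there, so its explicit barrier fails, and the paper instead takes as barrier the obstacle-problem minimizer $w$ on $G$, whose required properties come from Remark \ref{remark another proof} (comparison with a larger domain whose corner half-angle exceeds $\pi/4$, where the lemma is already known); your decomposition $u=v+P$ with $\Delta P=-C_{1}$, $P=0$ on the sides, works uniformly in $\alpha\in(0,\pi/2)$ and keeps the proof self-contained, at the price of treating the resonant angle $\alpha=\pi/4$ separately. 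That resonant case is only sketched in your writeup, but it does go through: one can take
\[
P(r,\theta)=-\frac{C_{1}}{4}r^{2}-\frac{C_{1}}{\pi}r^{2}\bigl(\log r\,\cos(2\theta)-\theta\sin(2\theta)\bigr),
\]
which satisfies $\Delta P=-C_{1}$, vanishes on $\theta=\pm\pi/4$, lies in $W^{1,\infty}$ near the vertex, and obeys $|P|\le Cr^{2}\log(1/r)\,\sin(\alpha-|\theta|)$, so the logarithm only forces a slightly smaller $\nu$, as you say. The remaining ingredients — the weak comparison principle from the start of Section 7 applied to $\phi-v$ on the truncated sector, the inequality $\cos(\tfrac{\pi}{2\alpha}\theta)=\sin(\tfrac{\pi}{2\alpha}(\alpha-|\theta|))\ge\sin(\alpha-|\theta|)$ and its reverse with a constant, and the lower bound $d_{p}\ge c\,r\sin(\alpha-|\theta|)$ from formula (\ref{eq: d_p =000026 d_2}) — are all used correctly (only the lower bound on $d_{p}$ is actually needed, not the full two-sided comparison). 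In short: correct, and for $\alpha\le\pi/4$ a more elementary and self-contained route than the paper's, which there relies on the variational comparison of Remark \ref{remark another proof}.
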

\begin{proof}
First assume that $\alpha>\pi/4$ and consider the following function
on $G$ 
\begin{equation}
v:=r^{\frac{\pi}{2\alpha}}\cos(\frac{\pi}{2\alpha}\theta)+\frac{r^{2}}{2}(\sin^{2}\theta-\cos^{2}\theta\tan^{2}\alpha).
\end{equation}
It is easy to see that in $G$ 
\begin{equation}
\Delta v=1-\tan^{2}\alpha<0,
\end{equation}
and $v=0$ on $\theta=\pm\alpha$. If $r_{0}$ is small enough then
$v>0$ on $r=r_{0}$. Also $D_{\theta}v(r_{0},-\alpha)>0$ and $D_{-\theta}v(r_{0},\alpha)>0$
(because $\cos(\frac{\pi}{2\alpha}\theta)\geq0$ and $\frac{\pi}{2\alpha}<2$).
Hence for $C>0$ large enough 
\begin{equation}
\Delta(Cv-u)\le0\qquad\textrm{ in the weak sense}.
\end{equation}
And also $Cv-u>0$ on $r=r_{0}$. Since its inward derivative is positive
at $(r_{0},\pm\alpha)$, so it is positive around them. (Actually,
we only need to work with the Lipschitz constants.) Also on the remaining
part of $r=r_{0}$ it is positive, because the coefficient of the
dominant term of $v$ there, i.e. $\cos(\frac{\pi}{2\alpha}\theta)$,
has a positive minimum there. Also $Cv-u$ vanishes on $\theta=\pm\alpha$.
Therefore by the maximum principle we have 
\begin{equation}
u\leq Cv
\end{equation}
inside $G$. Hence 
\[
u(r,\theta)\leq2Cr^{\frac{\pi}{2\alpha}}\cos(\frac{\pi}{2\alpha}\theta)=2Cr^{\nu}l\frac{\cos(\frac{\pi}{2\alpha}\theta)}{\omega(\theta)}.
\]
Where $\nu=\frac{\pi}{2\alpha}-1>0$, and 
\[
l=r\omega(\theta)=\begin{cases}
r\sin(\theta+\alpha) & -\alpha<\theta<0\\
r\sin(\alpha-\theta) & 0\leq\theta<\alpha
\end{cases}
\]
is the euclidean distance of the point $(r,\theta)$ from the straight
sides of $G$. But the function $\frac{\cos(\frac{\pi}{2\alpha}\theta)}{\omega(\theta)}$
is bounded for $\theta\in(-\alpha,\alpha)$ as it is continuous and
has finite limits at $\pm\alpha$. Thus 
\[
u(r,\theta)\leq\tilde{C}r^{\nu}l.
\]
But by the formula (\ref{eq: d_p =000026 d_2}) we have 
\[
d_{p}(r,\theta)\geq cl.
\]
(Note that there are two sides and we have to take the minimum of
both the distance and the $p\hspace{1bp}$-distance to them, hence
the equality in the formula (\ref{eq: d_p =000026 d_2}) becomes an
inequality.) Hence for small $r$ we get 
\begin{equation}
u(r,\theta)\leq\frac{\tilde{C}}{c}r^{\nu}d_{p}(r,\theta).
\end{equation}

When $\alpha\le\pi/4$, we need to find an appropriate replacement
for $v$. Consider $w$, the minimizer of $I$ over $K(G)$. As proved
in the Remark \ref{remark another proof}, $w$ satisfies the bound
(\ref{eq: |x|d}). It also has negative Laplacian around the vertex
of $G$. Also, $w$ vanishes on the straight sides of $G$, and is
nonnegative inside it. Consider $\tilde{G}\subset G$, which consists
of all points with $r<r_{1}$, where $r_{1}$ is small enough so that
$\Delta w=-\eta$ on $\tilde{G}$. We can also choose $r_{1}$ such
that $w$ is positive on some point of $\partial\tilde{G}$. Since
otherwise $w=0$ on $\tilde{G}$ and can not satisfy the equation.
As $\Delta w<0$ inside $\tilde{G}$ and $w$ is nonnegative on $\partial\tilde{G}$,
strong maximum principle implies that $w>0$ inside $\tilde{G}$. 

Now let $\hat{G}\subset G$ be the set of points with $r<r_{1}-\epsilon$.
Then on the circular part of $\partial\hat{G}$ we have $w>0$. Also
the inward normal derivative of $w$ is positive on the endpoints
of this circular part. The reason is that it is nonnegative and if
it was zero we would have $Dw=0$ at that point (note that $w=0$
on the straight sides of $\partial G$). But this gives a contradiction
because the second derivative of $w$ along the inward normal must
be negative (since $\Delta w<0$ and the second derivative of $w$
vanishes on the straight sides of $\partial G$). Thus if we move
along this inward normal $w$ decreases and becomes negative which
is a contradiction. Therefore $w$ has all the necessary properties
of $v$ and the previous proof can be repeated (note that the comparison
must be made on $\hat{G}$, not $G$).\end{proof}
\begin{thm}
If $y_{0}$ is a nonreentrant corner, then there is $R>0$ such that
$U\cap B_{R}(y_{0})$ is elastic.%
\footnote{We need to assume that if one of the tangents to $\partial U$ at
$y_{0}$ is parallel to one of the coordinate axes, then Assumption
\ref{assu: 1} holds at $y_{0}$, and around it there is no degenerate
point of Assumption \ref{assu: 2}. This is necessary to ensure that
$\kappa_{p}$ is bounded around $y_{0}$.%
}\end{thm}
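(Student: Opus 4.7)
The plan is to reduce the claim to Lemma \ref{lemma u<rd_p} by straightening the two boundary arcs at $y_0$ into the sides of a wedge. The lemma's conclusion will then translate into a bound of the form $u(x) \le C|x - y_0|^\nu \, d_p(x, \partial U)$ with $\nu > 0$ near $y_0$. Since plastic points satisfy $u = d_p(\cdot, \partial U)$, this super-linear factor precludes plastic points once $|x - y_0|$ is small enough.

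Placing $y_0$ at the origin, I would build a $C^2$ bi-Lipschitz map $\Phi$ sending $U \cap B_R(y_0)$ onto $G \cap B_{R'}(0)$, where $G = \{(r,\theta) \,\mid\, |\theta| < \alpha_0/2\}$ is the wedge of opening equal to the corner angle $\alpha_0 < \pi$, with $\Phi(y_0) = 0$ and the two arcs $S_i \cap B_R$, $S_{i+1} \cap B_R$ mapped onto the straight sides of $G$. The $C^2$ regularity of the two arcs up to $y_0$, together with the footnote's assumption ruling out unbounded $\kappa_p$, makes such a $\Phi$ available by a standard construction in polar coordinates around $y_0$. Setting $\tilde u := u \circ \Phi^{-1}$, one has $\tilde u \in W^{1,\infty}(G \cap B_{R'})$ with $\tilde u = 0$ on the straight sides of $G$. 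Since $u \in C^{1,1}_{\text{loc}}(U)$ and $\Delta u$ is bounded almost everywhere on $U$ near $y_0$ (using Theorem \ref{laplace d_p} combined with the boundedness of $\kappa_p$ and Lemma \ref{lemma 1-kd>0} to control $\Delta d_p$ on the plastic set, and $-\Delta u = \eta$ on the elastic set), the pullback $\Delta \tilde u$ is likewise bounded almost everywhere on $G \cap B_{R'}$, hence $-\Delta \tilde u \le C_1$ in the weak sense.

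Now Lemma \ref{lemma u<rd_p} applies to $\tilde u$ on $G$ with half-angle $\alpha = \alpha_0/2 < \pi/2$, yielding
\[
\tilde u(\tilde x) \le C\, |\tilde x|^\nu\, d_p(\tilde x, \partial G), \qquad \nu = \frac{\pi}{\alpha_0} - 1 > 0,
\]
for $|\tilde x|$ small, where $d_p(\cdot, \partial G)$ denotes the $p$-distance to the straight sides of $G$. Because $\Phi$ is bi-Lipschitz and maps $\partial U \cap B_R$ exactly onto the straight part of $\partial G$ inside $B_{R'}$, and because the $p$-norm is equivalent to the Euclidean norm on $\mathbb{R}^2$, the quantities $|\Phi(x)|$ and $d_p(\Phi(x), \partial G)$ are respectively comparable to $|x - y_0|$ and $d_p(x, \partial U)$. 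Pulling the estimate back gives
\[
u(x) \le C'\, |x - y_0|^\nu\, d_p(x, \partial U) \qquad \text{for all } x \in U \cap B_{R_1}(y_0).
\]
If such an $x$ were plastic, then $u(x) = d_p(x, \partial U) > 0$ would force $1 \le C'|x - y_0|^\nu$, which fails whenever $|x - y_0| < (1/C')^{1/\nu}$. Choosing $R$ below this threshold makes $U \cap B_R(y_0)$ fully elastic.

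The main obstacle is the verification that $\Delta u$ is genuinely bounded in a one-sided neighborhood of $y_0$ inside $U$, since that is what permits the application of Lemma \ref{lemma u<rd_p} after the straightening. On the elastic side the equation $-\Delta u = \eta$ gives this for free, but on the plastic side $\Delta u = \Delta d_p$ is controlled only as long as $\kappa_p$ does not blow up and $1 - \kappa_p d_p$ stays away from zero, which is precisely what the footnote's assumption (ruling out the degenerate scenarios of Assumptions \ref{assu: 1} and \ref{assu: 2} near $y_0$) is designed to ensure. Once this boundedness is in hand, the bi-Lipschitz change of variables alters the estimates only by universal constants, and the critical comparison of $p$-distances pulls through because the image of $\partial U$ is precisely the straight part of $\partial G$.
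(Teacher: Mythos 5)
Your argument is fine in the special case where the two boundary arcs at $y_{0}$ are straight segments: there you apply Lemma \ref{lemma u<rd_p} directly, with the boundedness of $\Delta u$ near $y_{0}$ justified exactly as the paper does ($-\Delta u=\eta$ on the elastic part, $\Delta u=\Delta d_{p}$ a.e.\ on the plastic part, with $\kappa_{p}$ bounded and $1-\kappa_{p}d_{p}$ bounded away from zero there). The gap is in the curved case, at the step ``the pullback $\Delta\tilde u$ is likewise bounded.'' A general $C^{2}$ bi-Lipschitz straightening $\Phi$ does not conjugate the Laplacian to the Laplacian: writing $\tilde u=u\circ\Phi^{-1}$, the quantity $\Delta\tilde u$ involves the \emph{full} Hessian $D^{2}u$ contracted against $D\Phi^{-1}\,(D\Phi^{-1})^{T}$, plus first-order terms, not just the trace $\Delta u$. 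The only control you have is on $\Delta u$ (a one-sided distributional bound) — the paper nowhere establishes that $D^{2}u$ is bounded up to $\partial U$ near the corner ($u$ is only $C^{1,1}_{\mathrm{loc}}(U)$), so boundedness of $\Delta u$ does not transfer to boundedness of $\Delta\tilde u$ under your map. Equivalently, in the weak formulation the Dirichlet form becomes $\int A\,D\tilde u\cdot D\phi$ with variable coefficients $A$, and Lemma \ref{lemma u<rd_p} is stated and proved only for the genuine Laplacian (its proof rests on explicit barriers with $\Delta v=1-\tan^{2}\alpha<0$, resp.\ the minimizer $w$ with $\Delta w=-\eta$, compared via the maximum principle for $\Delta$), so you cannot invoke it for the pulled-back inequality.

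This is precisely why the paper straightens the corner with a \emph{conformal} map $g$ from a straight-sided sector onto a neighborhood of $y_{0}$: conformality gives the exact identity $\Delta\tilde u=|Dg|^{2}\,(\Delta u)\circ g$, so only $\Delta u$ and $|Dg|$ need to be bounded, and the cited result of Caffarelli--Friedman supplies boundedness of $Dg$ and $Dg^{-1}$ up to the boundary (the latter also giving your comparability of distances, $\tilde d_{p}\le Cd_{p}$, $\tilde r\le Cr$). To repair your proof you would have to either replace your generic straightening by such a conformal map, prove a variable-coefficient version of Lemma \ref{lemma u<rd_p}, or establish a Hessian bound for $u$ up to the boundary near $y_{0}$; as written, the reduction step fails. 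The rest of your outline (the final contradiction $1\le C'|x-y_{0}|^{\nu}$ at a plastic point, forcing elasticity for small $R$) matches the paper's conclusion.
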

\begin{proof}
First suppose that $y_{0}$ is the intersection of two line segments
$S,S'$ of $\partial U$, making the angle $2\alpha<\pi$. We can
choose a polar coordinate centered at $y_{0}$ such that 
\begin{equation}
G:=\{(r,\theta)\,\mid\,0<r<r_{0}\,,\,-\alpha<\theta<\alpha\}\subset U.
\end{equation}
Note that 
\begin{equation}
\Delta u=\begin{cases}
-\eta & \textrm{in }E\\
\Delta d_{p} & \textrm{a.e. in }P,
\end{cases}
\end{equation}
and $\Delta d_{p}$ is bounded away from the $p\hspace{1bp}$-ridge
especially in $P$.%
\footnote{Since $\kappa_{p}$ is bounded around $y_{0}$, $1-\kappa_{p}d_{p}\to1$
uniformly, as we approach $\partial U$. Also, $1-\kappa_{p}d_{p}>0$
on $P$ around $y_{0}$, so it has a positive minimum there. In addition,
$\tau_{p}\kappa_{p}$ is bounded on $\partial U$ as we assumed that
$S_{i}$'s are smooth up to their endpoints. Hence $\Delta d_{p}$
is bounded on $P$ around $y_{0}$.%
} Thus we have that $\Delta u$ is bounded in $G$. (Note that there
is no reentrant corners there, so $\Delta d_{p}$ does not blow up
as we approach the boundary.) We assume that $r_{0}$ is small enough
so that the $p\hspace{1bp}$-closest point on $\partial U$ to any
point in $G$ lies on the sides of the angle at $y_{0}$. Then Lemma
\ref{lemma u<rd_p} implies that the region $U\cap\{r<R\}$ is elastic
for some small $R$, because 
\begin{equation}
u(r,\theta)\leq Cr^{\nu}d_{p}(r,\theta)<CR^{\nu}d_{p}(r,\theta)<d_{p}(r,\theta).
\end{equation}

Now suppose $y_{0}$ is a general nonreentrant corner. We reduce this
case to the previous case. Let $g$ be a conformal map from a sector
$G$ with straight sides to a neighborhood of $y_{0}$ (obviously
the opening angle of $G$ is the same as the one at $y_{0}$). As
proved in \citet{MR679313}, $g$ and its inverse have bounded derivative
up to the boundary. Let 
\begin{equation}
\tilde{u}(z):=u(g(z))
\end{equation}
be a function on $G$. Since $g$ is conformal, we have 
\begin{equation}
\Delta\tilde{u}=|Dg|^{2}\Delta u.
\end{equation}
And as $Dg,\Delta u$ are bounded, $\Delta\tilde{u}$ is also bounded.
Hence by Lemma \ref{lemma u<rd_p} 
\[
\tilde{u}\leq C\tilde{d}_{p}\tilde{r}^{\nu}
\]
near the vertex of $G$. Where $\tilde{d}_{p},\tilde{r}$ are respectively
the $p\hspace{1bp}$-distance from the straight sides of $G$, and
the distance from its vertex. As $Dg^{-1}$ is bounded we have $\tilde{d}_{p}\leq Cd_{p}$
and $\tilde{r}\leq Cr$. Therefore we get $u<d_{p}$ in a neighborhood
of $y_{0}$.\end{proof}
\begin{rem}
\label{remark another proof}This is another proof of the above theorem,
when $\alpha\leq\pi/4$ and $\partial U$ consists of line segments
near $y_{0}$. Here we only use Lemma \ref{lemma u<rd_p} for $\alpha>\pi/4$. 

Divide the angle by its $p\hspace{1bp}$-bisector. It is enough to
show that a sector of each of these smaller angles (the $p\hspace{1bp}$-halves)
is elastic. Let $U'$ be a domain containing $U$ such that $\partial U'$
contains $S$ and another line segment $S''$ starting at $y_{0}$
on the same side of $S$ as $S'$, making the angle $\gamma\in(\pi/2\,,\,\pi)$
with $S$. Thus $y_{0}$ is also a nonreentrant corner of $\partial U'$.
Let $w$ be the minimizer of $I$ over $K(U')$. We claim that $u\leq w$
on $U$. To see this consider the test functions 
\[
\begin{array}{c}
v:=u+(w-u)^{-}=\begin{cases}
u & u\leq w\\
w & u>w
\end{cases}\\
\\
\tilde{v}:=w+(u-w)^{+}=\begin{cases}
w & u\leq w\\
u & u>w
\end{cases}=w-(w-u)^{-}.
\end{array}
\]
(We consider $u$ to be zero outside $U$.) Now $u,w$ are both minimizers
of the functional $I$ over some closed convex sets. Also $v,\tilde{v}$
belong to those sets respectively (note that $v=0$ on $\partial U$
as $w\geq0=u$ there). Hence we have 
\begin{eqnarray*}
 &  & \int_{U}Du\cdot D(v-u)-\eta(v-u)\, dx\\
 &  & =\int_{U}Du\cdot D(w-u)^{-}-\eta(w-u)^{-}\, dx\geq0,
\end{eqnarray*}
and 
\begin{eqnarray*}
 &  & \int_{U'}Dw\cdot D(\tilde{v}-w)-\eta(\tilde{v}-w)\, dx\\
 &  & =\int_{U}-Dw\cdot D(w-u)^{-}+\eta(w-u)^{-}\, dx\geq0.
\end{eqnarray*}
Note that $(w-u)^{-}$ is zero outside $U$ as $w\geq0$ and $u=0$
there. Thus the domain of the second integral can be changed to $U$.
Adding these two integrals we get 
\begin{equation}
\int_{U}D(u-w)\cdot D(w-u)^{-}\, dx=-\int_{\{w<u\}}|D(w-u)|^{2}\, dx\geq0.
\end{equation}
Therefore $w\geq u$ a.e. which by continuity gives the required result.
Because otherwise we must have $w-u$ is a constant on the open set
$\{w<u\}$ (again using continuity) and as it vanishes on the boundary
of this set it must be zero on the set, which is a contradiction.

Hence for small $r$ 
\[
u(r,\theta)\leq w(r,\theta)<d_{p}((r,\theta)\,,\,\partial U').
\]
But when the point $(r,\theta)$ is in the $p\hspace{1bp}$-half attached
to $S$ of the angle at $y_{0}$ on $\partial U$ (and $r$ is small
enough) we have 
\[
d_{p}((r,\theta)\,,\,\partial U')=d_{p}((r,\theta)\,,\, S)=d_{p}((r,\theta)\,,\,\partial U).
\]
So $u<d_{p}(\cdot,\partial U)$ as desired.
\end{rem}

\section{Flat Boundaries}

Let $y(t)$ be a parametrization of part of $\partial U$ and as before
$y(t)+\delta(t)\mu(t)$ be the parametrization of the free boundary
attached to this part. If $\delta(t)>0$ for $t\in(a,b)$ and $\delta(a)=\delta(b)=0$
then we call the set 
\[
\{y(t)+s\mu(t)\,\mid\, t\in[a,b]\,,\, s\in[0,\delta(t)]\}
\]
a \textbf{plastic component}.
\begin{thm}
The number of plastic components attached to a closed line segment%
\footnote{This means that the segment is a closed proper subset of a line segment
of $\partial U$.%
} of $\partial U$ is finite.\end{thm}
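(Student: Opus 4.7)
The plan is to argue by contradiction via a blow-up at an accumulation point. Suppose there are infinitely many plastic components $(a_n,b_n)$ attached to a closed line segment of $\partial U$. By compactness of the parametrization of this segment, a subsequence of endpoints accumulates at some $t_0$ in the closed segment, and by the continuity of $\delta$ (Theorem \ref{thm: free bdry jordan}, valid because the segment is a line segment and hence a regular piece of $\partial U$), we must have $\delta(t_0)=0$. Choose coordinates so that $y(t_0)$ is the origin, the line segment lies on the $x$-axis, and $U$ is locally contained in the upper half-plane; by the computation of Section~6, $d_p(t,s)=cs$ on this segment for some constant $c>0$.

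I would then introduce the obstacle-type function $w:=d_p-u=cs-u$. By Lemma \ref{segment is plastic} and the definition of $P$, the function $w$ is $C^{1,1}$, $w\ge0$, $w\equiv0$ on $P\cup\partial U$, $\Delta w=\eta$ in $E$, and $\nabla w=0$ on the free boundary. The crucial input from the assumed accumulation is the identity $u_s(t_0,0)=c$: on each plastic component we have $u(t,s)=cs$, hence $u_s(a_n,0)=c$, and Lipschitz continuity of $\nabla u$ together with $a_n\to t_0$ forces the limit. Combined with $u_t(t_0,0)=0$ (from $u\equiv0$ on $\partial U$) and $u(t_0,0)=0$, this yields $w(t_0,0)=0$, $\nabla w(t_0,0)=0$, and therefore the quadratic decay $w(x)=O(|x|^2)$ as $x\to0$.

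The heart of the argument is then a standard blow-up. I would define $w_r(x):=w(rx)/r^2$ and use the uniform $C^{1,1}$ bound plus quadratic decay to extract a subsequential limit $w_0$ in $C^{1,\alpha}_{\mathrm{loc}}(\overline{\mathbb{R}^2_+})$ which satisfies $w_0\ge0$, $w_0=0$ on $\{s=0\}$, and $\Delta w_0=\eta\,\chi_{\{w_0>0\}}$. A Weiss/Monneau-type monotonicity formula (adapted to this flat boundary obstacle problem) shows that $w_0$ is $2$-homogeneous, and a direct computation in polar coordinates---the angular profile $W(\theta)$ must satisfy $W''+4W=\eta$ on $\{W>0\}$ with zero Dirichlet and Neumann data at the endpoints---forces $w_0(t,s)=\tfrac{\eta}{2}\,s^{2}$. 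Finally, if $(c_n,M_n)$ denotes the peak of the $n$-th cap, choosing the scale $r_n:=\sqrt{(c_n-t_0)^2+M_n^2}$ and passing to a subsequence so that the rescaled peak converges to some $(\xi,\sigma)$ on the unit upper half-circle, we read off $0=w_{r_n}((c_n-t_0)/r_n,M_n/r_n)\to w_0(\xi,\sigma)=\tfrac{\eta}{2}\sigma^2$, which contradicts $\sigma>0$.

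The only remaining possibility is that $\sigma=0$ along every admissible scale, i.e.\ $M_n=o(|c_n-t_0|)$, meaning the caps become arbitrarily shallow compared to their offset; in that regime I would re-run the blow-up at the anisotropic scale $r_n:=M_n$ and exploit the same classification to again place a rescaled cap point at a height $s=1$ inside $\{w_0>0\}$, producing the contradiction. The main obstacle is precisely this classification/uniqueness of the blow-up: the monotonicity formula is standard in the interior obstacle problem but must be verified at the flat boundary $\{s=0\}$, and the ``wide cap'' case must be handled by an anisotropic rescaling argument. A cleaner alternative, which I would keep as a backup, is to use that $u_t$ is harmonic in $E$ and vanishes on both $\partial U$ and the free boundary (from $u=cs$ on $P$ and $\nabla u=\nabla d_p$ on the free boundary), and run a Phragm\'en--Lindel\"of / barrier argument in the narrow elastic pockets squeezed between consecutive plastic caps to force $u_t\equiv0$ in the limit, incompatible with the existence of distinct plastic components.
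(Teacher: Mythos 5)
There is a genuine gap, and it sits exactly at the point you yourself flag as ``the main obstacle.'' The blow-up at the accumulation point $t_0$ can only rule out caps whose height is comparable to their horizontal distance from $t_0$: once you are in the shallow-cap regime $M_n=o(|c_n-t_0|)$, the blow-up limit $\tfrac{\eta}{2}s^2$ (whose contact set is just the line $\{s=0\}$) is perfectly consistent with infinitely many tiny plastic components, so no contradiction is produced. Your proposed fix --- rescaling at the anisotropic scale $r_n=M_n$ while still centering at $t_0$ --- does not work: the rescaled cap sits at horizontal distance $|c_n-t_0|/M_n\to\infty$ and leaves every compact set, so the locally uniform convergence to $w_0$ never sees it. Recentering at the caps themselves would require uniform quadratic growth/decay at varying centers and a classification of \emph{global, not necessarily homogeneous} solutions of $\Delta w=\eta\chi_{\{w>0\}}$ in a half-plane with zero Dirichlet data admitting a contact component of unit height; none of this is supplied, and it is precisely where the proof would have to do real work. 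There are also unproven ingredients earlier in the argument: the quadratic decay $w=O(|x|^2)$ and the compactness of $w_r$ need $C^{1,1}$ regularity of $u$ \emph{up to} the flat boundary piece (the paper only has $C^{1,1}_{\mathrm{loc}}(U)$), and the Weiss-type monotonicity and nondegeneracy at a Dirichlet boundary point are asserted rather than established. These are plausibly repairable, but the shallow-cap case is a structural hole, not a technicality.

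For comparison, the paper avoids blow-ups entirely. After noting (via continuity of $\delta$) that the cap heights $H_i\to 0$, so a line $x_2=\epsilon$ meets only $n(\epsilon)<\infty$ caps with $n(\epsilon)\to\infty$, it studies the harmonic function $D_{x_1}u$ in the elastic region bounded by a fixed piecewise analytic curve $\gamma$ and the line $x_2=\epsilon$. On each crossing of a cap, $D_{x_1}u=D_{x_1}(u-d_p)$ changes sign; since it vanishes on the free boundary and on the segment $x_2=0$, its level curves can only exit through $\gamma$, forcing at least $2n(\epsilon)-1$ sign changes of the analytic function $D_{x_1}u$ on the fixed curve $\gamma$ --- impossible as $\epsilon\to 0$. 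Your one-sentence ``backup plan'' (that $u_t$ is harmonic in $E$ and vanishes on $\partial U$ and the free boundary, then a barrier/Phragm\'en--Lindel\"of argument in the elastic pockets) is much closer in spirit to this counting argument, but as written it is only a sketch and does not explain how ``$u_t\equiv 0$ in the limit'' yields the contradiction; if you want to salvage the proposal, developing that direction into the sign-change count on an analytic curve is the way to go.
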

\begin{proof}
For simplicity let the line segment be $\{(x_{1},0)\,\mid\: a\leq x_{1}\leq b\}$.
Suppose to the contrary that there are infinitely many plastic components
\[
P_{i}=\{(x_{1},x_{2})\,\mid\, x_{1}\in[a_{i},b_{i}]\,,\, x_{2}\in[0,\delta(x_{1})]\}
\]
attached to the line segment. Where $\delta$ is a continuous nonnegative
function on $[a,b]$ and $b_{i}\leq a_{i+1}$. Note that as we proved,
$\delta$ is analytic on the set $\{\delta>0\}$. Let 
\[
H_{i}:=\underset{x\in[a_{i},b_{i}]}{\max}\delta(x).
\]
Since $b_{i}-a_{i}\rightarrow0$ as $i\rightarrow\infty$, we must
have $H_{i}\rightarrow0$. Otherwise a subsequence, $H_{n_{i}}$ converges
to a positive number and by taking a further subsequence we can assume
that this subsequence is $\delta(x_{n_{i}})$ where $x_{n_{i}}\to c$.
But this contradicts the continuity of $\delta$ at $c$ because $b_{n_{i}}\to c$
too. 

Hence any line $x_{2}=\epsilon$ intersects only a finite number $n(\epsilon)$
of $P_{i}$'s, and $n(\epsilon)\rightarrow\infty$ as $\epsilon\rightarrow0$. 

Consider a piecewise analytic curve $\gamma$ in the elastic region
$E$, such that it starts at $(a,0)$ and ends at $(b,0)$ (if $P_{i}$'s
accumulate at one of these points, then $\gamma$ will start or end
at a point $(a-\epsilon,0)$ or $(b+\epsilon,0)$ if these points
have an elastic neighborhood and if not, at the free boundary point
on the lines $x_{1}=a-\epsilon$ or $x_{1}=b+\epsilon$). We also
consider $\gamma$ to be close enough to the segment so that for points
between them the $p\hspace{1bp}$-distance to $\partial U$ is the
$p\hspace{1bp}$-distance to the segment, so for those points $d_{p}(x,\partial U)$
is a function of only $x_{2}$. 

Now consider $E_{\epsilon}$ to be the elastic region enclosed by
$\gamma$ and the line $x_{2}=\epsilon$. The $\partial E_{\epsilon}$
consists of $\gamma$, segments on the line $x_{2}=\epsilon$ and
parts of $\partial P_{i}$'s. On every segment on $\partial E_{\epsilon}\cap\{x_{2}=\epsilon\}$
with endpoints on the free boundary, the function $D_{x_{1}}(u-d_{p})=D_{x_{1}}u$
is analytic and changes sign as $u-d_{p}$ is zero on the endpoints
and negative between them. Let $(c,\epsilon)\,,\,(d,\epsilon)$ for
$c<d$ be points close to those endpoints such that 
\[
D_{x_{1}}u(c,\epsilon)<0\;,\; D_{x_{1}}u(d,\epsilon)>0.
\]
Then as proved in \citet{MR679313} the level curves of the harmonic
function $D_{x_{1}}u$ can be continued until they exit its domain
$\cup E_{\epsilon}$. But $D_{x_{1}}u=D_{x_{1}}(u-d_{p})$ is zero
on the free boundary and on the segment $x_{2}=0$, hence they must
exist through $\gamma$. This implies that $D_{x_{1}}u$ changes sign
at least $2n(\epsilon)-1$ times on $\gamma$, since the level curves
do not cross each other. But this number grows to infinity as $\epsilon\rightarrow0$
contradicting the fact that $\gamma$ is piecewise analytic and $D_{x_{1}}u$
is analytic on a neighborhood of it.\end{proof}
\begin{acknowledgement*}
I would like to express my gratitude to Lawrence C. Evans and Nicolai
Reshetikhin for their invaluable help with this research.
\end{acknowledgement*}
\bibliographystyle{plainnat}
\bibliography{New-Paper-Bibliography}

\vspace{0.5cm}

\address{Department of Mathematics, UC Berkeley, Berkeley, CA 94720, USA}

\email{safdari@berkeley.edu}
\end{document}